\newtheorem{thm}{Theorem}[section]
\newtheorem{lem}[thm]{Lemma}
\newtheorem{rem}[thm]{Remark}
\numberwithin{equation}{section}
\begin{document}
\title{Superconvergence  of both two and three dimensional rectangular Morley elements for biharmonic equations
\thanks{The first author was supported by  the NSFC Projects 11271035,  91430213 and 11421101.}}
\author{\normalsize Jun Hu$^\dagger$,~~Zhongci Shi$^\ast$,~~Xueqin Yang$^\dagger$\\ \normalsize
$^\dagger$ LMAM and School of Mathematical Sciences,
Peking University, \\ \normalsize Beijing 100871, P.R.China\\
$^\ast$ LESC, Institute of Computational Mathematics and Scientific/Engineering Computing,\\ Academy of Mathematics and System Science, Chinese Academy of Sciences,\\
\normalsize Beijing 100190, P.R.China
\\\vspace{2mm} \normalsize email: hujun@math.pku.edu.cn; ~~  shi@lsec.cc.ac.cn;\\yangxueqin1212@pku.edu.cn \normalsize
}
\date{}
\maketitle
\begin{abstract}
In the present paper, superconvergence of second order, after an appropriate postprocessing,
is achieved  for both the two and  three dimensional first order rectangular Morley elements of biharmonic equations.
The analysis is dependent on  superconvergence of second order for the consistency error and a corrected canonical interpolation operator, which
help to establish  supercloseness of second order  for the corrected canonical interpolation. Then the final superconvergence
 follows a  standard postprocessing. For first order nonconforming finite element methods
 of both two and three dimensional fourth order elliptic problems, it is the first time that  full superconvergence of second order is obtained without
 an extra boundary condition imposed on  exact solutions. It is also the first time that superconvergence is  established
 for  nonconforming finite element methods of three dimensional fourth order elliptic problems.  Numerical results are presented  to demonstrate the theoretical results.\\\\
\textbf{Keywords:}  Biharmonic equation;  rectangular Morley element; superconvergence
\end{abstract}

\section{Introduction}
Because of significant  applications in scientific and engineering computing,   superconvergence analysis of finite element methods has become an active
  subject  since 70's last century.  However,  most of attentions have been paid  on  conforming and mixed finite element methods  of second order problems,
   we refer interested readers to \cite{Chen,ChenHuang,LinLinBook,LinYan1996} for more details. Since conforming finite element methods of
   fourth order problems  are very complicated,  most of popularly used elements in practice are  nonconforming, for instance,
   \cite{Lascaux85,Morley(1968),Shi1986,WangShiXu07,WangShiXub,WX06,WangXu07,WuMaoqing1983}.
   However, for nonconforming finite elements, due to nonconformity of
both trial and test functions, it becomes much more difficult to establish superconvergence
properties and related asymptotic error expansions. For second order elliptic problems, there are a few superconvergence results on rectangular elements. In \cite{ChenLi1994,ShiJiangB}, superconvergence  of the gradient  was obtained at the centers of elements for the Wilson element, which relies on the observation
 that the Wilson element space can be split into a conforming part and a nonconforming part.
  Due to superconvergence  of consistency errors, superconvergence of the nonconforming rotated $Q_1$ element \cite{RannacherTurek(1992)} and its variants was derived, see \cite{HuShi(2005),LinTobiska,MingShiXu2006}.  For the plate bending problem, there are only  few superconvergence results for nonconforming finite elements. In \cite{Chen}, Chen first established the supercloseness of the corrected interpolation of the incomplete biquadratic element \cite{WuMaoqing1983,Shi1986} on uniform rectangular meshes. By using similar corrected interpolations as in \cite{Chen}, Mao et al. \cite{MaoShi} first proved one and a half-order superconvergence for the Morley element \cite{Morley(1968)} and the incomplete biquadratic nonconforming element on uniform rectangular meshes.  In a recent paper \cite{HuMa}, Hu and Ma proposed a new method by using equivalence between the Morley element and the first order Hellan-Herrmann-Johnson element and obtained one and a half-order superconvergence for the Morley element on uniform mesh.  That  half order superconvergence
   can be improved to one order superconvergence  if the third order normal derivative of  exact solutions vanishes on the boundary of the domain under
    consideration. Based on the equivalence to the Stokes equations and a superconvergence result of Ye \cite{Ye2002} on the Crouzeix--Raivart element, Huang et al. \cite{HuangHuang2013} derived the superconvergence for the Morley element, which was postprocessed by projecting the finite element solution to another finite element space on a coarser mesh.  See Lin and Lin  \cite{LinLin}  for superconvergence of  the Ciarlet--Raviart scheme of  the biharmonic equation.
   Note that all of those results are only for fourth order problems in two dimensions. Superconvergence of nonconforming finite element methods cannot be found for fourth order problems in three dimensions.

    The purpose of the present paper is to analyze  superconvergence  of both  the two-dimensional and three-dimensional
     rectangular Morley elements from \cite{WangShiXu07}. Since  both of them are nonconforming, one difficulty is to
     bound the consistency error.  Another difficulty is from the canonical interpolation operator which does not admit
      supercloseness.  To overcome the first  difficulty, we use  some special orthogonal property of  the canonical interpolation
       operators of both the bilinear and trilinear elements when  applied to the functions in the  rectangular Morley element spaces.
       The other crucial observation is that the error between the (piecewise) gradient of functions in the discrete spaces and its mean  is equal
       on two opposite edges (faces) of an element.   In particular, this leads to superconvergence of  second order for the consistency error.
        To  deal with the second difficulty, we follow the idea from \cite{Chen}  to use  a correction of the canonical interpolation.
         Together with the asymptotic expansion results from \cite{HuYang}, this yields supercloseness of second order for such a corrected
          interpolation. Finally, based on the above  superconvergence results,  we  follow the postprocessing idea from \cite{LinYan1996}
            to  obtain  a  global superconvergent approximate solution, which converges at the second order convergence rate.
    It should be stressed that for first order nonconforming finite element methods
 of both two and three dimensional fourth order elliptic problems, it is the first time that  full superconvergence of  second order is obtained without
 an extra boundary condition imposed on  exact solutions. It is also the first time that superconvergence  is  established
 for  nonconforming finite element methods of three dimensional fourth order elliptic problems.

This paper is organized as follows. In the following section, we shall present the model problem and the rectangular Morley element. In section 3, we analyze the superconvergence property of the consistency error for the two-dimensional situation.  In section 4, we make a correction of the canonical interpolation and obtain the superconvergence result after the postprocessing. In section 5, we establish the superconvergence result for the three-dimensional cubic Morley element. In the last section 6, we present some numerical results to demonstrate our theoretical results.

\section{The model problem and the rectangular Morley element}

\subsection{The model problem}
We consider the model fourth order elliptic problem: Given $f\in L^2(\Omega)$, $\Omega\subset \mathbb{R}^2$ is a bounded Lipschitz domain,
\begin{eqnarray}
&&\left\{
\begin{array}{lll}
\Delta^2 u=f,\quad in \;\Omega,\\
u=\frac{\partial u}{\partial n}=0,\quad on\; \partial\Omega. \label{eq45}
\end{array}
\right.\\\nonumber
\end{eqnarray}
The variational formula of problem (\ref{eq45}) is to find $u\in V:=H^2_0(\Omega)$, such that
\begin{equation}
 a(u,v):=(\nabla^2 u,\nabla^2 v)_{L^2(\Omega)}=(f,v)_{L^2(\Omega)},\; \text{for  any}\;\; v \in V. \label{eq1}
\end{equation}
where $\nabla^2 u$ denotes the Hessian matrix of the function $u$.

\subsection{The two-dimensional rectangular Morley element }

To consider the discretization of $(\ref{eq1})$ by the rectangular Morley element method, let $\mathcal{T}_h$ be a regular uniform rectangular triangulation of the domain $\Omega$. Given $K\in \mathcal{T}_h$, 
let $(x_{1,c},x_{2,c})$ be the center of $K$, the meshsize $h$ and affine mapping:\\
\begin{equation} \label{eq2}  \xi_1=\frac{x_1-x_{1,c}}{h},\quad\xi_2=\frac{x_2-x_{2,c}}{h}\;,\; \text{for any} \;(x_1,x_2)\in K.\end{equation}
On element $K$, the shape function space of the rectangular Morley element from \cite{ShiWang} reads\\
\begin{equation}
P(K):=P_2(K)+\text{span}\{x_1^3,x_2^3\}, \label{eq3}
\end{equation}
here and throughout this paper, $P_l(K)$ denotes the space of polynomials of degree $\leq l$ over $K$. The nodal parameters are: for any $v\in C^1(K)$,
\begin{equation}
D(v):=\bigg(v(a_i),\;\;\frac{1}{|e_j|}\int_{e_j} \frac{\partial v}{\partial n_{e_j}}\;\mathrm{d}s\bigg),\;\;i,j=1,2,3,4, \label{eq4}
\end{equation}
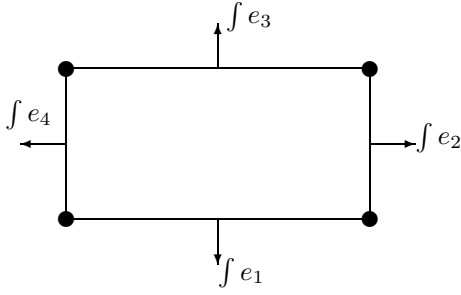
\begin{figure}
\begin{center}
\setlength{\unitlength}{2cm}
\begin{picture}(2,2)
\put(0,0.5){\line(1,0){2}} \put(0,0.5){\line(0,1){1}}
\put(2,0.5){\line(0,1){1}} \put(0,1.5){\line(1,0){2}}
\put(0,0.5){\circle*{0.1}}
\put(0,1.5){\circle*{0.1}}
\put(2,0.5){\circle*{0.1}}
\put(2,1.5){\circle*{0.1}}

\put(1,0.5){\vector(0,-1){0.3}}
\put(1,0.1){$\int e_1$}\put(1.05,1.8){$\int e_3$} \put(-0.4,1.15){$\int e_4$} \put(2.3,1){$\int e_2$}
\put(1,1.5){\vector(0,1){0.3}}
\put(0,1){\vector(-1,0){0.3}}
\put(2,1){\vector(1,0){0.3}}

\linethickness{0.6mm}
\end{picture}
\end{center}
\caption{degrees of freedom }
\end{figure}
where $a_i$ are vertices of $K$ and $e_j$ are edges with unit normal vectors $n_{e_j}$ of $K$, $|e_j|$ denote measure of edges $e_j$, see Figure 1.
 Let the reference element $\widehat{K}$ be a square on ($\xi_1$, $\xi_2$) plane,
 its vertices be $\widehat{a}_1(-1,-1)$, $\widehat{a}_2(1,-1)$, $\widehat{a}_3(1,1)$, $\widehat{a}_4(-1,1)$,
 and its sides be $\widehat{e}_1=\widehat{a}_1\widehat{a}_2$, $\widehat{e}_2=\widehat{a}_2\widehat{a}_3$,
 $\widehat{e}_3=\widehat{a}_3\widehat{a}_4$, $\widehat{e}_4=\widehat{a}_4\widehat{a}_1$.

The nonconforming rectangular Morley element space is then defined by
\begin{eqnarray*}
V_h&:=&\{v \in L^2(\Omega):\;v|_K \in P(K),\; \forall K \in \mathcal{T}_h, \;v\;is\;continuous\;at\;all\;internal\;vertices\;and\\&&\;vanishes\;at\;all\;boundary\;vertices,\;and\int_e \frac{\partial v}{\partial n_e}\,\mathrm{d}s\;is\;continuous\;on\;internal\;edges\;\\&& e\;and\;vanishes\;on\;boundary\;edges\; e\;of \;\mathcal{T}_h\}.
\end{eqnarray*}

The discrete problem of (\ref{eq1}) reads: Find $u_h \in V_h$, such that
\begin{equation}
a_h(u_h,v_h):=(\nabla_h^2 u_h,\nabla_h^2 v_h)_{L^2(\Omega)}=(f,v_h)_{L^2(\Omega)},\; \text{for any}\; v_h \in V_h. \label{eq5}
\end{equation}
where the operator $\nabla_h^2$ is the discrete counterpart of $\nabla^2$, which is defined element by element since the discrete space $V_h$ is nonconforming. Define a semi-norm over $V_h$ by
\begin{equation}
|u_h|_h^2:=a_h(u_h,u_h),\;\text{for any}\; u_h\in V_h.
\end{equation}
Let $u$ and $u_h$ be the solutions of $(\ref{eq1})$ and $(\ref{eq5})$, respectively, by the second Strang Lemma (\cite{Ciarlet},\cite{ShiWang}), we have
\begin{equation}
|u-u_h|_h\leq C\bigg(\inf\limits_{v_h\in V_h}|u-v_h|_h+\sup\limits_{0\neq w_h\in V_h}\frac{|a_h(u,w_h)-(f,w_h)|}{|w_h|_h}\bigg),
\end{equation}
where the first term is the approximation error and the second one is the consistency error. Herein and throughout this paper, $C$ denotes a generic positive constant which is independent of the meshsize and may be different at different places.

\section{Superconvergence  of the rectangular Morley element in 2D}
\subsection{Superconvergence  of the consistency error}
Let $I_h$ be piecewise bilinear interpolation operator on $\Omega$,  $I_h:V_h\rightarrow B_h$,
\begin{equation}
I_h v(P)=v(P),\;\text{for any vertex}\; P\;of\;\mathcal{T}_h,
\end{equation}
where
\begin{equation}
B_h=\big\{v\in H^1(\Omega),\; v|_K\in Q_1(K),\; \forall K\in \mathcal{T}_h\big\},
\end{equation}
and $Q_l(K)$ denotes the space of all polynomials which are of degree$\leq l$ with respect to each variable $x_i$, over $K$. Let
 the interpolation operator $\widehat{I}_{\widehat{K}}$ be the counterpart of $I_h$ on the reference element $\widehat{K}$.
The bilinear interpolation opertor $I_h$ has the following error estimate:
\begin{equation}
|v-I_hv|_{H^l(K)}\leq Ch^{2-l}|v|_{H^2(K)},\quad l=0,1, \label{eq57}
\end{equation}
for any  $v\in H^2(K)$. It is straightforward to see that $I_h$ is well defined for any $w_h\in V_h$. By Green's formula,
\begin{equation}
(f,I_h w_h)=(\Delta^2 u,I_h w_h)=-\int_\Omega \nabla\Delta u\cdot\nabla I_h w_h\;\mathrm{d}x_1\mathrm{d}x_2. \label{eq9}
\end{equation}
The integration by parts yields
\begin{eqnarray}
a_h(u,w_h)&=&-\sum_{K\in \mathcal{T}_h}\int_K\nabla\Delta u\cdot \nabla w_h\;\mathrm{d}x_1\mathrm{d}x_2+\sum_{K\in \mathcal{T}_h}\int_{\partial K}\frac{\partial^2 u}{\partial n^2}\frac{\partial w_h}{\partial n}\;\mathrm{d}s\nonumber\\&&+\sum_{K\in \mathcal{T}_h}\int_{\partial K}\frac{\partial ^2u}{\partial s\partial n}\frac{\partial w_h}{\partial s}\;\mathrm{d}s,\label{eq10}
\end{eqnarray}
where $\frac{\partial}{\partial s}$ and $\frac{\partial}{\partial n}$ are tangential and normal derivatives along element boundaries, respectively.
A combination of (\ref{eq9}) and (\ref{eq10}) yields
\begin{equation}
\begin{split}
a_h(u,w_h)-(f,w_h)&=a_h(u,w_h)-(f,I_h w_h)+(f,I_h w_h-w_h)\\& =-\sum_{K\in \mathcal{T}_h}\int_K\nabla\Delta u\cdot\nabla(w_h-I_h w_h)\;\mathrm{d}x_1\mathrm{d}x_2-\sum_{K\in \mathcal{T}_h}\int_Kf(w_h-I_h w_h)\;\mathrm{d}x_1\mathrm{d}x_2\\&+\sum_{K\in \mathcal{T}_h}\int_{\partial K}\frac{\partial^2 u}{\partial n^2}\frac{\partial w_h}{\partial n}\;\mathrm{d}s+\sum_{K\in \mathcal{T}_h}\int_{\partial K}\frac{\partial ^2u}{\partial s\partial n}\frac{\partial w_h}{\partial s}\;\mathrm{d}s.\label{eq11}
\end{split}
\end{equation}
The Cauchy-Schwarz inequality and the interpolation error estimate (\ref{eq57}) lead to
\begin{equation}
\bigg|\sum_{K\in\mathcal{T}_h}\int_Kf(w_h-I_h w_h)\;\mathrm{d}x_1\mathrm{d}x_2\bigg|\leq Ch^2||f||_{L^2(\Omega)}|w_h|_h, \label{eq14}
\end{equation}
which indicates a suprconvergence rate $O(h^2)$.

In the following three lemmas, we will analyze superconvergence for the three remaining terms of (\ref{eq11}).
\begin{lem}\label{lem1}
Suppose that $u\in H_0^2(\Omega)\bigcap H^4(\Omega)$ and $w_h\in V_h$. Then,
\begin{equation}
\sum_{K\in\mathcal{T}_h}\int_K\nabla\Delta u\cdot\nabla(w_h-I_h w_h)\;\mathrm{d}x_1\mathrm{d}x_2\leq Ch^2|u|_{H^4(\Omega)}|w_h|_h. \label{eq12}
\end{equation}
\end{lem}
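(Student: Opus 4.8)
The plan is to exploit the orthogonality of the bilinear interpolation operator $I_h$ together with the special structure of $P(K)$. The key observation is that on each element $K$, the difference $w_h - I_h w_h$ vanishes at all four vertices, so it lies in the subspace of $P(K)$ (shifted to the reference element) whose members vanish at the vertices. Since the shape function space is $P(K) = P_2(K) + \mathrm{span}\{x_1^3, x_2^3\}$, after subtracting the bilinear interpolant the residual $w_h - I_h w_h$ is a linear combination (with element-dependent coefficients) of a fixed set of "bubble-type" basis functions. On the reference element $\widehat K$, these are polynomials that vanish at the four vertices; scaling back to $K$, each such function $b$ has the property that, for a suitable choice of monomials, $\int_K \partial_j b \, \mathrm{d}x = 0$ for the lowest-order pieces. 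I would pull everything back to $\widehat K$ via the affine map \eqref{eq2}, expand $w_h - I_h w_h$ in the reference basis, and use a scaling argument.

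The main steps, in order: First I would write $\int_K \nabla\Delta u \cdot \nabla(w_h - I_h w_h)\,\mathrm{d}x$ and replace $\nabla\Delta u$ on each $K$ by its $L^2(K)$-projection onto constants, say $\overline{\nabla\Delta u}_K$, paying an error of order $h\,|\nabla\Delta u|_{H^1(K)} = h\,|u|_{H^4(K)}$ in the coefficient; combined with $\|w_h - I_h w_h\|_{H^1(K)}$, which by \eqref{eq57} and an inverse-type estimate is controlled by $\|w_h - I_h w_h\|_{H^1(K)} \le C\|w_h\|_{H^1(K)} \le C h\,|w_h|_{H^2(K)}$ (using that $w_h - I_h w_h$ has zero vertex values and the Morley-type norm equivalence), this already gives one factor of $h$, but not two. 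To get the second factor of $h$, the crucial point is that $\int_K \overline{\nabla\Delta u}_K \cdot \nabla(w_h - I_h w_h)\,\mathrm{d}x = \overline{\nabla\Delta u}_K \cdot \int_K \nabla(w_h - I_h w_h)\,\mathrm{d}x$, and this integral of the gradient reduces, by the divergence theorem, to boundary integrals of $(w_h - I_h w_h)$ against the outward normal. On each edge $e_j$ of $K$, $w_h - I_h w_h$ is a polynomial vanishing at both endpoints, so $\int_{e_j}(w_h - I_h w_h)\,\mathrm{d}s$ captures only the quadratic and cubic parts; a direct reference-element computation shows these contributions, summed over the four edges with the correct signs, cancel to the relevant order — equivalently, $\int_K \partial_j(w_h - I_h w_h)\,\mathrm{d}x$ is itself of size $h\cdot h\,|w_h|_{H^2(K)}$ rather than $h\,|w_h|_{H^2(K)}$, because the only surviving terms are the symmetric cubic bubbles $\xi_j^3 - \xi_j$ whose edge averages against the normal annihilate on opposite faces. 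Summing over $K\in\mathcal{T}_h$ and applying Cauchy–Schwarz yields the bound $C h^2 |u|_{H^4(\Omega)}|w_h|_h$.

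The hard part will be the bookkeeping in the reference-element computation showing the cancellation of the edge contributions of $w_h - I_h w_h$: one must expand a general element $\widehat w \in \widehat P(\widehat K)$ vanishing at the vertices in terms of $\xi_1\xi_2$, $\xi_1^2 - 1$, $\xi_2^2 - 1$, $\xi_1^3 - \xi_1$, $\xi_2^3 - \xi_2$ (modulo the vertex-interpolation correction, $I_h \widehat w$ removes exactly the $\xi_1\xi_2$ and constant components so that $\widehat w - \widehat I_{\widehat K}\widehat w$ lives in $\mathrm{span}\{\xi_1^2-1,\xi_2^2-1,\xi_1^3-\xi_1,\xi_2^3-\xi_2\}$), compute $\int_{\widehat K}\partial_{\xi_j}(\cdot)\,\mathrm{d}\widehat\xi$ for each basis function, and check it vanishes — which it does because each of these functions is even or odd in a way that forces the reference gradient to integrate to zero. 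I expect this, rather than any analytic estimate, to be the only delicate point; once it is in place, the scaling back to $K$ (each derivative costs $h^{-1}$, $\mathrm{d}x = h^2\,\mathrm{d}\widehat\xi$) and the global Cauchy–Schwarz are routine.
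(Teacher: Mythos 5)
Your proof is correct and is essentially the paper's own argument: the paper checks that the reference-element functional $B_1(\widehat{\phi},\widehat{w}_h)=\int_{\widehat{K}}\widehat{\phi}\,\partial_{\xi_1}(\widehat{w}_h-\widehat{I}_{\widehat{K}}\widehat{w}_h)\,\mathrm{d}\xi_1\mathrm{d}\xi_2$ vanishes for constant $\widehat{\phi}$ (exactly your cancellation, since the residual lies in $\mathrm{span}\{\xi_1^2-1,\xi_2^2-1,\xi_1^3-\xi_1,\xi_2^3-\xi_2\}$, where in fact the integral is identically zero, not merely higher order) and then applies the Bramble--Hilbert lemma in the $\widehat{\phi}$ argument, which is precisely your subtraction of the elementwise mean of $\nabla\Delta u$ followed by scaling. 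The only blemish is your intermediate chain $\|w_h-I_hw_h\|_{H^1(K)}\le C\|w_h\|_{H^1(K)}\le Ch|w_h|_{H^2(K)}$, whose middle step is both unnecessary and false as written; the bound you actually need is just (\ref{eq57}) applied to $w_h|_K\in H^2(K)$.
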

\begin{proof}
On the reference element $\widehat{K}$, consider the following functional
\begin{equation}
B_1(\widehat{\phi},\widehat{w}_h)=\int_{\widehat{K}}\widehat{\phi}\frac{\partial (\widehat{w}_h-\widehat{I}_{\widehat{K}}\widehat{w}_h)}{\partial \xi_1}\;\mathrm{d}\xi_1\mathrm{d}\xi_2,
\end{equation}
A simple calculation leads to the interpolations, see Table 1.
\begin{table*}[h]
 \centering
\caption{calculation of interpolation}
 \begin{tabular}{cccccccccc}
 \hline
 $\widehat{w}_h$             &1          &$\xi_1$                  &$\xi_2$        &$\xi_1\xi_2$   &$\xi_1^2$&$\xi_2^2$&$\xi_1^3$&$\xi_2^3$&\\\hline
$ \widehat{I}_{\widehat{K}}\widehat{w}_h$   &1 &$\xi_1$ &$\xi_2$&$\xi_1\xi_2$&1&1&$\xi_1$&$\xi_2$&\\\hline
 \end{tabular}
\end{table*}
\\It can be checked that
\begin{eqnarray*}
&&\left\{
\begin{array}{lll}
B_1(\widehat{\phi},\widehat{w}_h)\leq 	C||\widehat{\phi}||_{L^2(\widehat{K})}|\widehat{w}_h|_{H^2(\widehat{K})},\\
B_1(\widehat{\phi},\widehat{w}_h)=0,\;\;\forall\widehat{\phi}\in P_0(\widehat{K}),\;\;\forall\widehat{w}_h\in V_h.
\end{array}
\right.\\
\end{eqnarray*}
The Bramble-Hilbert lemma gives
 \begin{equation}
 B_1(\widehat{\phi},\widehat{w}_h)\leq C\inf\limits_{\widehat{p}\in P_0(\widehat{K})}||\widehat{\phi}+\widehat{p}||_{L^2(\widehat{K})}|\widehat{w}_h|_{H^2(\widehat{K})}\leq C|\widehat{\phi}|_{H^1(\widehat{K})}|\widehat{w}_h|_{H^2(\widehat{K})}. \label{eq41}
 \end{equation}
 A substitution of $\phi=\frac{\partial \Delta u}{\partial x_1}$ into (\ref{eq41}), plus a scaling argument, yield
 \begin{equation}
 \int_K\frac{\partial \Delta u}{\partial x_1}\frac{\partial (w_h-I_h w_h)}{\partial x_1}\;\mathrm{d}x_1\mathrm{d}x_2\leq Ch^2|u|_{H^4(K)}|w_h|_h\;\;\text{for any}\;K\in\mathcal{T}_h.
 \end{equation}
 A similar argument proves
 \begin{equation}
 \int_K\frac{\partial \Delta u}{\partial x_2}\frac{\partial (w_h-I_h w_h)}{\partial x_2}\;\mathrm{d}x_1\mathrm{d}x_2\leq Ch^2|u|_{H^4(K)}|w_h|_h,
 \end{equation}
 which completes the proof.
\end{proof}
\begin{lem}\label{lem6}
Suppose that $u\in H_0^2(\Omega)\bigcap H^4(\Omega)$ and $w_h\in V_h$. Then,
\begin{equation}
\sum_{K\in \mathcal{T}_h}\int_{\partial K}\frac{\partial^2 u}{\partial n^2}\frac{\partial w_h}{\partial n}\;\mathrm{d}s\leq Ch^2|u|_{H^4(\Omega)}|w_h|_h.
\end{equation}
\end{lem}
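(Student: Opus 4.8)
The plan is to exploit the structure of the degrees of freedom of $V_h$ together with the special form of the shape space $P(K)$, so that the element-boundary sum splits into a piece that cancels \emph{exactly} over the mesh and a piece that is genuinely of order $O(h^2)$.

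\emph{Localization and decomposition.} Since $u\in H^4(\Omega)\hookrightarrow C^2(\overline{\Omega})$, the pure second derivative $\partial^2 u/\partial n^2$ is single valued on each edge, so I would first rewrite $\sum_{K}\int_{\partial K}\frac{\partial^2 u}{\partial n^2}\frac{\partial w_h}{\partial n}\,\mathrm{d}s$ as a sum over the edges of $\mathcal{T}_h$, treating horizontal and vertical edges separately. On a horizontal edge $e$ of $K$ the integrand reduces to $\frac{\partial^2 u}{\partial x_2^2}\,\partial_{x_2}w_h$; I would split $\partial_{x_2}w_h|_e$ into its mean value over $e$ plus a remainder $r_e^K$. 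The key structural observation is that, because $w_h|_K\in P(K)$, one has $r_e^K=(\partial_{x_1}\partial_{x_2}w_h|_K)\,(x_1-x_{1,c})$, so $r_e^K$ is \emph{the same function} on the two opposite horizontal edges $e_1(K),e_3(K)$ of $K$ (and similarly for vertical edges), and of course $\int_e r_e^K\,\mathrm{d}s=0$.

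\emph{The mean parts cancel.} Collecting the mean parts gives, up to the vertical analogue, $\sum_K\big(m_3(K)\int_{e_3(K)}\frac{\partial^2 u}{\partial x_2^2}\,\mathrm{d}s-m_1(K)\int_{e_1(K)}\frac{\partial^2 u}{\partial x_2^2}\,\mathrm{d}s\big)$, where $m_i(K)$ denotes the mean of $\partial_{x_2}w_h|_K$ over $e_i(K)$. On an interior edge $e$ the two adjacent elements share the same value of $\int_e\frac{\partial^2 u}{\partial x_2^2}\,\mathrm{d}s$, while the definition of $V_h$ --- continuity of $\int_e\partial w_h/\partial n_e\,\mathrm{d}s$ across interior edges and its vanishing on $\partial\Omega$ --- forces the two edge means to coincide (resp. to vanish on boundary edges). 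Hence this entire contribution is identically zero, so no estimate is needed for it; this is the step that produces the genuine gain over the classical $O(h)$ argument.

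\emph{The remainder part is $O(h^2)$.} Using $r_{e_1}^K=r_{e_3}^K=:r^K$, what remains is, per element, $\int\big(\frac{\partial^2 u}{\partial x_2^2}\big|_{e_3(K)}-\frac{\partial^2 u}{\partial x_2^2}\big|_{e_1(K)}\big)\,r^K\,\mathrm{d}x_1$ plus the vertical term. Since $\int r^K\,\mathrm{d}x_1=0$, I may subtract from the bracket its average over $e$; a Poincar\'e inequality along $e$ gains one power of $h$, and then writing $g|_{e_3(K)}-g|_{e_1(K)}=\int\partial_{x_2}g\,\mathrm{d}x_2$ with $g=\partial_{x_1}\partial_{x_2}^2 u$ and using Cauchy--Schwarz in $x_2$ gains a further $h^{1/2}$, so the first factor is bounded by $Ch^{3/2}|u|_{H^4(K)}$. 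For the remainder, $\|r^K\|_{L^2(e)}\le Ch^{1/2}\|\partial_{x_1}\partial_{x_2}w_h\|_{L^2(K)}\le Ch^{1/2}\|\nabla^2 w_h\|_{L^2(K)}$, because the remainder is carried entirely by the mixed second derivative, which is part of the energy norm. Multiplying, summing over $K$ and applying the Cauchy--Schwarz inequality over the elements gives $Ch^2|u|_{H^4(\Omega)}\,\|\nabla_h^2 w_h\|_{L^2(\Omega)}=Ch^2|u|_{H^4(\Omega)}|w_h|_h$; the vertical edges are handled by interchanging the roles of $x_1$ and $x_2$.

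The main obstacle is precisely in obtaining the full second order rather than the $O(h)$ (or $O(h^{3/2})$) one gets from the naive argument of subtracting the edge mean of $\partial^2 u/\partial n^2$: one has to recognize that the edge-mean part vanishes \emph{exactly} (not just that it is small) and that the remainders on opposite edges are \emph{literally equal}, which is what converts a lone trace term into an element-wise difference of $u$ that can absorb two additional half-powers of $h$.
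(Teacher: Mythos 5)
Your proposal is correct and follows essentially the same route as the paper's proof of Lemma \ref{lem6}: subtract the edge mean of $\partial w_h/\partial n$ so that the mean parts cancel exactly through the continuity/vanishing of the edge degrees of freedom, use the key structural fact that the remainders coincide on the two opposite edges of each element (the paper's ``crucial property''), and convert the remaining term into an element-wise difference of $\partial^2 u/\partial n^2$ which yields the $O(h^2)$ bound. The only difference is that you spell out the final scaling estimates (the Poincar\'e inequality along the edge, the fundamental-theorem-of-calculus step in the transverse direction, and the $h^{1/2}\|\nabla_h^2 w_h\|_{L^2(K)}$ bound for the remainder) more explicitly than the paper, which compresses them into the single estimate (\ref{eq51}).
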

\begin{proof}
Given $K\in \mathcal{T}_h$, let $e_i,i=1,\cdots,4$ be its four edges. Define $\Pi^0_{e_i}w=\frac{1}{|e_i|}\int_{e_i}w\mathrm{d}s$ and  $\mathcal{R}^0_{e_i}w=w-\Pi^0_{e_i}w$, for any $w\in L^2(K)$, then we have
\begin{equation}
\int_{e_i}\mathcal{R}^0_{e_i}w\;\mathrm{d}s=0.\label{eq53}
\end{equation}
Since $\int_{e_i}\frac{\partial w_h}{\partial n}\mathrm{d}s$ is continuous on internal edges $e_i$ and vanishes on boundary edges of $\mathcal{T}_h$, thus
\begin{eqnarray*}
\sum_{K\in \mathcal{T}_h}\int_{\partial K}\frac{\partial^2 u}{\partial n^2}\frac{\partial w_h}{\partial n}\;\mathrm{d}s&=&\sum_{K\in \mathcal{T}_h}\sum_{i=1}^4\int_{e_i}\frac{\partial^2 u}{\partial n^2}\frac{\partial w_h}{\partial n}\;\mathrm{d}s\\&=&\sum_{K\in \mathcal{T}_h}\sum_{i=1}^4\int_{e_i}\frac{\partial^2 u}{\partial n^2}\mathcal{R}^0_{e_i}\frac{\partial w_h}{\partial n}\;\mathrm{d}s\\&=&\sum_{K\in \mathcal{T}_h}\sum_{i=1}^4J_i.
\end{eqnarray*}
We first analyze the following terms
\begin{eqnarray*}
\sum_{K\in\mathcal{T}_h}J_2+J_4&=&\sum_{K\in\mathcal{T}_h}\bigg(\int_{e_2}\frac{\partial^2 u}{\partial n^2}\mathcal{R}^0_{e_2}\frac{\partial w_h}{\partial n}\;\mathrm{d}s+\int_{e_4}\frac{\partial^2 u}{\partial n^2}\mathcal{R}^0_{e_4}\frac{\partial w_h}{\partial n}\;\mathrm{d}s\bigg)\\&=&\sum_{K\in\mathcal{T}_h}\bigg(\int_{e_2}\frac{\partial^2 u}{\partial x_1^2}\mathcal{R}^0_{e_2}\frac{\partial w_h}{\partial x_1}\;\mathrm{d}x_2-\int_{e_4}\frac{\partial^2 u}{\partial x_1^2}\mathcal{R}^0_{e_4}\frac{\partial w_h}{\partial x_1}\;\mathrm{d}x_2\bigg)\\&=&\sum_{K\in\mathcal{T}_h}\int_{x_{2,c}-h}^{x_{2,c}+h}\bigg(\frac{\partial^2 u}{\partial x_1^2}\bigg|_{e_2}\mathcal{R}^0_{e_2}\frac{\partial w_h}{\partial x_1}\bigg|_{e_2}-\frac{\partial^2 u}{\partial x_1^2}\bigg|_{e_4}\mathcal{R}^0_{e_4}\frac{\partial w_h}{\partial x_1}\bigg|_{e_4}\bigg)\;\mathrm{d}x_2.
\end{eqnarray*}
For the rectangular Morley element, we have the following crucial property
\begin{equation*}
\mathcal{R}^0_{e_2}\frac{\partial w_h}{\partial x_1}\bigg|_{e_2}=\mathcal{R}^0_{e_4}\frac{\partial w_h}{\partial x_1}\bigg|_{e_4},\quad \mathcal{R}^0_{e_1}\frac{\partial w_h}{\partial x_2}\bigg|_{e_1}=\mathcal{R}^0_{e_3}\frac{\partial w_h}{\partial x_2}\bigg|_{e_3}.
\end{equation*}
This implies
\begin{eqnarray*}
\sum_{K\in\mathcal{T}_h}J_2+J_4&=&\sum_{K\in\mathcal{T}_h}\int_{x_{2,c}-h}^{x_{2,c}+h}\Big(\frac{\partial^2 u}{\partial x_1^2}\Big|_{e_2}-\frac{\partial^2 u}{\partial x_1^2}\Big|_{e_4}\Big)\mathcal{R}^0_{e_2}\frac{\partial w_h}{\partial x_1}\bigg|_{e_2}\;\mathrm{d}x_2\\&=&\sum_{K\in\mathcal{T}_h}\int_{x_{2,c}-h}^{x_{2,c}+h}\mathcal{R}^0_{e_2}\Big(\frac{\partial^2 u}{\partial x_1^2}\Big|_{e_2}-\frac{\partial^2 u}{\partial x_1^2}\Big|_{e_4}\Big)\mathcal{R}^0_{e_2}\frac{\partial w_h}{\partial x_1}\bigg|_{e_2}\;\mathrm{d}x_2.
\end{eqnarray*}
The error estimate of the interpolation operators $\Pi^0_{e_2}$ yields
\begin{equation}
\begin{split}
\sum_{K\in\mathcal{T}_h}J_2+J_4&\leq Ch\sum_{K\in\mathcal{T}_h}\Big|\Big|\nabla\Big(\frac{\partial^2 u}{\partial x_1^2}\Big|_{e_2}-\frac{\partial^2 u}{\partial x_1^2}\Big|_{e_4}\Big)\Big|\Big|_{L^2(K)}|w_h|_h\\&\leq Ch^2|u|_{H^4(\Omega)}|w_h|_h.\label{eq51}
\end{split}
\end{equation}
By the same argument, we can get
\begin{equation}
\sum_{K\in\mathcal{T}_h}J_1+J_3\leq Ch^2|u|_{H^4(\Omega)}|w_h|_h. \label{eq52}
\end{equation}
Then, a combination of (\ref{eq51}) and (\ref{eq52}) completes the proof.
\end{proof}
\begin{lem}\label{lem7}
Suppose that $u\in H_0^2(\Omega)\bigcap H^4(\Omega)$ and $w_h\in V_h$. Then,
\begin{equation}
\sum_{K\in \mathcal{T}_h}\int_{\partial K}\frac{\partial ^2u}{\partial s\partial n}\frac{\partial w_h}{\partial s}\;\mathrm{d}s\leq Ch^2|u|_{H^4(\Omega)}|w_h|_h.
\end{equation}
\end{lem}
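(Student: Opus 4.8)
The plan is to mimic the structure of the proof of Lemma~\ref{lem6}, exploiting the same two ingredients: the continuity/vanishing of the relevant degrees of freedom across edges, and a ``crucial property'' of the rectangular Morley element saying that a certain component of the piecewise gradient agrees with its edge average on opposite edges of an element. First I would observe that on an edge $e_i$ the tangential derivative $\frac{\partial w_h}{\partial s}$ is \emph{not} a controlled degree of freedom, so we cannot directly insert an $\mathcal{R}^0_{e_i}$ projection as was done for the normal derivative in Lemma~\ref{lem6}. The remedy is to integrate by parts along the edge. For a fixed element $K$ with edges $e_1,\dots,e_4$, each $\int_{e_i}\frac{\partial^2 u}{\partial s\,\partial n}\frac{\partial w_h}{\partial s}\,\mathrm{d}s$ can be rewritten, via integration by parts in the tangential variable, as a boundary term at the two endpoints of $e_i$ plus $-\int_{e_i}\frac{\partial}{\partial s}\!\left(\frac{\partial^2 u}{\partial s\,\partial n}\right)\frac{\partial^2 w_h}{\partial s^2}\cdots$; but a cleaner route, and the one I would pursue, is to note that $w_h$ restricted to $e_i$ is a polynomial and that $\frac{\partial w_h}{\partial s}$ is continuous \emph{modulo} the fact that $w_h$ itself is continuous at the vertices. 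Since $w_h$ is continuous at all internal vertices and vanishes at boundary vertices, summing the endpoint boundary terms over all elements telescopes: the contributions from the two elements sharing an internal vertex combine into $\big[\frac{\partial^2 u}{\partial s\,\partial n}\big]$-type jumps along the mesh skeleton, which vanish because $u\in H^4(\Omega)$ makes $\frac{\partial^2 u}{\partial s\,\partial n}$ single-valued, and the boundary-vertex terms vanish outright.

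Concretely, I would split the sum over the four edges into the pair of horizontal edges $\{e_1,e_3\}$ and the pair of vertical edges $\{e_2,e_4\}$, just as in Lemma~\ref{lem6}. On $e_2$ and $e_4$ (the vertical edges, where $s$ is the $x_2$-direction and $n$ the $x_1$-direction), $\frac{\partial^2 u}{\partial s\,\partial n}=\frac{\partial^2 u}{\partial x_1\partial x_2}$ and $\frac{\partial w_h}{\partial s}=\frac{\partial w_h}{\partial x_2}$. After integrating by parts along each edge and using the vertex continuity of $w_h$ to telescope the endpoint terms to zero (or to boundary terms that vanish), what remains is $-\sum_K\int_{e_2\cup e_4}\big(\partial_s\frac{\partial^2 u}{\partial x_1\partial x_2}\big)\,\big(w_h-\Pi^0_{e_i}w_h\big)$ — more precisely, the remaining edge integrals involve $w_h$ against $\frac{\partial}{\partial s}\frac{\partial^2u}{\partial x_1\partial x_2}$, into which one can freely insert the projection $\mathcal{R}^0$ onto the edge because the relevant quantity now has vanishing edge average (or, equivalently, because one may subtract an edge constant from $\frac{\partial^2u}{\partial x_1\partial x_2}$ before differentiating, using that $w_h$ at the vertices is single-valued). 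Then I would invoke the analogue of the crucial property — that the difference between $\frac{\partial w_h}{\partial x_2}$ and a constant, or the relevant surrogate quantity, coincides on $e_2$ and $e_4$ — so that the sum over the opposite-edge pair produces $\mathcal{R}^0\!\big(\frac{\partial^2 u}{\partial x_1\partial x_2}\big|_{e_2}-\frac{\partial^2 u}{\partial x_1\partial x_2}\big|_{e_4}\big)$ times a bounded factor, exactly mirroring the display chain in Lemma~\ref{lem6}.

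The final estimate then follows by the same two moves used there: apply the $O(h)$ error estimate for the edge-average projection $\Pi^0$ to pull out one extra power of $h$, bounding the remaining factor by $\big\|\nabla\big(\frac{\partial^2 u}{\partial x_1\partial x_2}\big|_{e_2}-\frac{\partial^2 u}{\partial x_1\partial x_2}\big|_{e_4}\big)\big\|_{L^2(K)}\le Ch|u|_{H^4(K)}$ (again because the two edge-traces of a third derivative of $u$ differ by $O(h)$ in $H^1(K)$, a standard scaling/trace estimate), and by a Cauchy--Schwarz inequality over $K\in\mathcal{T}_h$ together with $\|\partial w_h/\partial x_2\|_{L^2(K)}\le C\,h$ -weighted by $|w_h|_h$ exactly as in~(\ref{eq51}). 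Summing over the two pairs of opposite edges and over all $K$ yields the bound $Ch^2|u|_{H^4(\Omega)}|w_h|_h$. The main obstacle I anticipate is the tangential integration-by-parts step: one must verify carefully that the endpoint (vertex) contributions really do cancel globally, which hinges precisely on $w_h$ being continuous at internal vertices and zero at boundary vertices and on $\frac{\partial^2u}{\partial s\partial n}$ being single-valued along the skeleton — and on handling the fact that, unlike the normal-derivative case, the tangential derivative is not an explicitly prescribed degree of freedom, so the ``crucial property'' must be re-derived (or re-stated) in the form appropriate to $\frac{\partial w_h}{\partial s}$ rather than $\frac{\partial w_h}{\partial n}$.
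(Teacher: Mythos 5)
There is a genuine gap, and it starts from your opening premise. You claim that one cannot directly insert $\mathcal{R}^0_{e_i}$ on $\frac{\partial w_h}{\partial s}$ because the tangential derivative is not a controlled degree of freedom; in fact it is controlled, since $\Pi^0_{e_i}\frac{\partial w_h}{\partial s}=\frac{1}{|e_i|}\big(w_h(b_i)-w_h(a_i)\big)$ with $a_i,b_i$ the endpoints of $e_i$. By the continuity of $w_h$ at internal vertices this edge mean is single-valued across internal edges, and it vanishes on boundary edges because $w_h$ vanishes at boundary vertices; since $\frac{\partial^2 u}{\partial s\partial n}$ is single-valued and the outward normals of the two adjacent elements are opposite, the mean parts cancel in the global sum exactly as in Lemma \ref{lem6}, so one may replace $\frac{\partial w_h}{\partial s}$ by $\mathcal{R}^0_{e_i}\frac{\partial w_h}{\partial s}$ directly. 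Moreover the ``crucial property'' holds verbatim for the tangential derivative: on a rectangle $\frac{\partial w_h}{\partial x_2}$ has the form $a+bx_1+cx_2+dx_2^2$, so its deviation from the edge mean is the same on $e_2$ and $e_4$ (and similarly for $\frac{\partial w_h}{\partial x_1}$ on $e_1,e_3$). With these two observations the proof is literally the one of Lemma \ref{lem6}, which is what the paper means by ``the similar procedure''; no tangential integration by parts is needed.

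The detour you propose instead does not close. After integrating by parts along each edge (note your formula should produce $-\int_{e_i}\partial_s\big(\frac{\partial^2u}{\partial s\partial n}\big)\,w_h\,\mathrm{d}s$, not a term with $\frac{\partial^2 w_h}{\partial s^2}$), the endpoint terms do cancel as you say, but the surviving edge integrals involve $w_h$ \emph{undifferentiated}, and your justification for inserting $\mathcal{R}^0$ there fails: $\Pi^0_e w_h$ is not a degree of freedom of the rectangular Morley element, it is neither single-valued across internal edges nor zero on boundary edges, and $\int_e\partial_s\big(\frac{\partial^2u}{\partial s\partial n}\big)\mathrm{d}s=\frac{\partial^2u}{\partial s\partial n}(b)-\frac{\partial^2u}{\partial s\partial n}(a)$ does not vanish, so the term $\sum_K\sum_i\int_{e_i}\partial_s\big(\frac{\partial^2u}{\partial s\partial n}\big)\Pi^0_{e_i}w_h\,\mathrm{d}s$ cannot simply be dropped. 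Even granting the insertion, $\mathcal{R}^0_{e}w_h$ is controlled only by first derivatives of $w_h$, which are not bounded elementwise by $|w_h|_h$ (constants and linears are invisible to $|\cdot|_h$), so the concluding Cauchy--Schwarz step as you describe it (including the garbled bound ``$\|\partial w_h/\partial x_2\|_{L^2(K)}\le Ch$ weighted by $|w_h|_h$'') does not yield $O(h^2)|u|_{H^4(\Omega)}|w_h|_h$ without substantial additional global cancellation or a discrete Poincar\'e-type argument that you have not supplied. The fix is simply to abandon the integration by parts and run the Lemma \ref{lem6} argument with the two observations in the previous paragraph.
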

The proof of this lemma can follow the similar procedure of Lemma \ref{lem6}, and herein we omit it.
According to above lemmas, we can obtain the following error estimate.
\begin{thm}\label{thm1}
Suppose that $u\in H^4(\Omega)$, $f\in L^2(\Omega)$ and for all $ w_h\in V_h$, then the consistency error can be estimated as
\begin{equation*}
a_h(u,w_h)-(f,w_h)\leq Ch^2(||f||_{L^2(\Omega)}+|u|_{H^4(\Omega)})|w_h|_h. \label{eq15}
\end{equation*}
\end{thm}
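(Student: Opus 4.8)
The plan is to assemble Theorem \ref{thm1} directly from the decomposition of the consistency error already established in equation (\ref{eq11}) together with the four estimates that precede it. Recall that (\ref{eq11}) writes $a_h(u,w_h)-(f,w_h)$ as a sum of exactly four pieces: the interior term $-\sum_K\int_K\nabla\Delta u\cdot\nabla(w_h-I_hw_h)$, the load term $-\sum_K\int_K f(w_h-I_hw_h)$, and the two boundary terms involving $\partial^2 u/\partial n^2$ and $\partial^2 u/\partial s\partial n$. So the proof is essentially a bookkeeping step: bound each of the four pieces and add.

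First I would invoke (\ref{eq14}) for the load term, which gives the bound $Ch^2\|f\|_{L^2(\Omega)}|w_h|_h$. Next, Lemma \ref{lem1} handles the interior term $\sum_K\int_K\nabla\Delta u\cdot\nabla(w_h-I_hw_h)$ with bound $Ch^2|u|_{H^4(\Omega)}|w_h|_h$. Then Lemma \ref{lem6} bounds the $\partial^2u/\partial n^2$ boundary term and Lemma \ref{lem7} bounds the $\partial^2u/\partial s\partial n$ boundary term, each again by $Ch^2|u|_{H^4(\Omega)}|w_h|_h$. Summing these four estimates and absorbing all constants into a single generic $C$ yields
\begin{equation*}
a_h(u,w_h)-(f,w_h)\leq Ch^2\bigl(\|f\|_{L^2(\Omega)}+|u|_{H^4(\Omega)}\bigr)|w_h|_h,
\end{equation*}
which is the claimed statement. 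One should also note that the hypothesis $u\in H^4(\Omega)$ stated in the theorem should really be read together with $u\in H^2_0(\Omega)$ (as in the lemmas), since that is needed both for the Green's formula manipulations leading to (\ref{eq11}) and for the lemma hypotheses; alternatively, since $u$ solves (\ref{eq1}) it automatically lies in $H^2_0(\Omega)$.

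There is essentially no obstacle here — all the real work has already been carried out in Lemmas \ref{lem1}, \ref{lem6} and \ref{lem7} and in the elementary estimate (\ref{eq14}). The only minor point to be careful about is that the identity (\ref{eq11}) requires the integration-by-parts step (\ref{eq10}) and the Green's-formula step (\ref{eq9}) to be valid, which in turn needs $\Delta^2 u=f$ to hold in $L^2(\Omega)$ and $u$ to be regular enough; both are guaranteed under the stated hypotheses. Hence the theorem follows immediately by combining (\ref{eq11}), (\ref{eq14}), Lemma \ref{lem1}, Lemma \ref{lem6} and Lemma \ref{lem7}.
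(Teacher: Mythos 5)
Your proposal is correct and matches the paper's own argument exactly: the paper states Theorem \ref{thm1} as an immediate consequence of the decomposition (\ref{eq11}) combined with the load-term estimate (\ref{eq14}) and Lemmas \ref{lem1}, \ref{lem6}, \ref{lem7}, which is precisely the bookkeeping you carry out. Your added remark that $u$ should also satisfy $u\in H^2_0(\Omega)$ (as needed in the lemmas and in the Green's formula steps) is a fair reading of the implicit hypothesis and does not change the argument.
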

\subsection{Asymptotic expansion of  the canonical interpolation}
Given $K\in \mathcal{T}_h$, we define the canonical interpolation operator $\Pi_K : H^3(K)\rightarrow P(K)$ by, for any $v \in H^3(K) $,
\begin{equation}   \Pi_K v(P)=v(P)\; and\;\int_e \frac{\partial \Pi_K v}{\partial n_e}\;\mathrm{d}s=\int_e \frac{\partial v}{\partial n_e}\;\mathrm{d}s, \end{equation}\\
for any vertex $P$ of $K$ and any edge $e$ of $K$. The interpolation operator $\Pi_K$ has the following error estimates:
\begin{equation}  |v-\Pi_K v|_{H^l(K)}\leq Ch^{3-l}|v|_{H^3(K)},\;l=0,1,2,3, \end{equation}\\
provided that $v \in H^3(K)$. Then the global version $\Pi_h$ of the interpolation operator $\Pi_K$ is defined as
\begin{equation}  \Pi_h|_K=\Pi_K\;\; \text{for any}\; K \in \mathcal{T}_h. \end{equation}

We need the following asymptotic expansion result from  \cite{HuYang}.
\begin{lem}\label{lem10}
Suppose that $u\in H^4(\Omega)$, then for all $v_h\in V_h$, we have
\begin{eqnarray}
a_h(u-\Pi_h u,v_h)&\leq& \sum_{K\in\mathcal{T}_h} \frac{h^2}{3}\int_K\frac{\partial^3u}{\partial x_1\partial x_2^2}\frac{\partial^3v_h}{\partial x_1^3}\,\mathrm{d}x_1\mathrm{d}x_2\nonumber\\&&+\sum_{K\in\mathcal{T}_h} \frac{h^2}{3}\int_K\frac{\partial^3u}{\partial x_1^2\partial x_2}\frac{\partial^3v_h}{\partial x_2^3}\,\mathrm{d}x_1\mathrm{d}x_2\nonumber\\&&+Ch^2|u|_{H^4(\Omega)}|v_h|_h.\label{eq33}
\end{eqnarray}
\end{lem}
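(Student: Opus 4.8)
The plan is to establish (\ref{eq33}) element by element on the reference square $\widehat{K}$: I would isolate the exact leading $O(h^2)$ contribution by an explicit computation of the canonical interpolation on the two cubic monomials it does not reproduce, and absorb everything else into the $O(h^2)|u|_{H^4}$ term via a Bramble--Hilbert argument. First I would use the piecewise definition of $\Pi_h$ to write $a_h(u-\Pi_h u,v_h)=\sum_{K\in\mathcal{T}_h}\int_K\nabla^2(u-\Pi_K u):\nabla^2 v_h\,\mathrm{d}x_1\mathrm{d}x_2$ and pass to $\widehat{K}$ through the affine map (\ref{eq2}). Since $\Pi_K$ commutes with this map, $\widehat{\Pi_K v}=\widehat{\Pi}_{\widehat{K}}\widehat{v}$, where $\widehat{\Pi}_{\widehat{K}}$ is the canonical interpolation onto $P(\widehat{K})=P_2(\widehat{K})+\mathrm{span}\{\xi_1^3,\xi_2^3\}$, and the usual scaling gives $\int_K\nabla^2(u-\Pi_K u):\nabla^2 v_h\,\mathrm{d}x_1\mathrm{d}x_2=h^{-2}\int_{\widehat{K}}\nabla_\xi^2(\widehat{u}-\widehat{\Pi}_{\widehat{K}}\widehat{u}):\nabla_\xi^2\widehat{v}_h\,\mathrm{d}\xi_1\mathrm{d}\xi_2$.

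The crux is an explicit evaluation of $\widehat{\Pi}_{\widehat{K}}$ on $\xi_1^2\xi_2$ and $\xi_1\xi_2^2$, the degree-three monomials not contained in $P(\widehat{K})$. Matching the eight nodal parameters (\ref{eq4}) one computes $\widehat{\Pi}_{\widehat{K}}(\xi_1^2\xi_2)=\tfrac{4}{3}\xi_2-\tfrac{1}{3}\xi_2^3$, and by the $\xi_1\leftrightarrow\xi_2$ symmetry $\widehat{\Pi}_{\widehat{K}}(\xi_1\xi_2^2)=\tfrac{4}{3}\xi_1-\tfrac{1}{3}\xi_1^3$. Integrating the Hessian of $\xi_1^2\xi_2-\widehat{\Pi}_{\widehat{K}}(\xi_1^2\xi_2)=\xi_1^2\xi_2-\tfrac{4}{3}\xi_2+\tfrac{1}{3}\xi_2^3$ against $\nabla_\xi^2\widehat{v}_h$ over $\widehat{K}$ (all odd monomials integrate to zero) produces exactly $\tfrac{1}{3}\int_{\widehat{K}}\frac{\partial^3(\xi_1^2\xi_2)}{\partial\xi_1^2\partial\xi_2}\frac{\partial^3\widehat{v}_h}{\partial\xi_2^3}\,\mathrm{d}\xi_1\mathrm{d}\xi_2$, and analogously for $\xi_1\xi_2^2$. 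This motivates introducing the bilinear functional
\begin{equation*}
G(\widehat{u},\widehat{v}_h):=\int_{\widehat{K}}\nabla_\xi^2(\widehat{u}-\widehat{\Pi}_{\widehat{K}}\widehat{u}):\nabla_\xi^2\widehat{v}_h\,\mathrm{d}\xi_1\mathrm{d}\xi_2-\frac{1}{3}\int_{\widehat{K}}\Big(\frac{\partial^3\widehat{u}}{\partial\xi_1^2\partial\xi_2}\frac{\partial^3\widehat{v}_h}{\partial\xi_2^3}+\frac{\partial^3\widehat{u}}{\partial\xi_1\partial\xi_2^2}\frac{\partial^3\widehat{v}_h}{\partial\xi_1^3}\Big)\,\mathrm{d}\xi_1\mathrm{d}\xi_2 .
\end{equation*}
Because $\widehat{\Pi}_{\widehat{K}}$ reproduces $P_2(\widehat{K})$ and the monomials $\xi_1^3,\xi_2^3$ — for all of which the subtracted term also vanishes — and because of the two identities just obtained, one checks that $G(\widehat{u},\widehat{v}_h)=0$ for every $\widehat{u}\in P_3(\widehat{K})$ and every $\widehat{v}_h\in P(\widehat{K})$.

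Finally I would bound $G$ for $\widehat{u}\in H^4(\widehat{K})$: its first term is $\leq|\widehat{u}-\widehat{\Pi}_{\widehat{K}}\widehat{u}|_{H^2(\widehat{K})}|\widehat{v}_h|_{H^2(\widehat{K})}\leq C|\widehat{u}|_{H^3(\widehat{K})}|\widehat{v}_h|_{H^2(\widehat{K})}$ by the interpolation error estimate, and its second term is $\leq C|\widehat{u}|_{H^3(\widehat{K})}|\widehat{v}_h|_{H^3(\widehat{K})}\leq C|\widehat{u}|_{H^3(\widehat{K})}|\widehat{v}_h|_{H^2(\widehat{K})}$ by equivalence of norms on the finite-dimensional space $P(\widehat{K})/P_1(\widehat{K})$; hence $|G(\widehat{u},\widehat{v}_h)|\leq C|\widehat{u}|_{H^3(\widehat{K})}|\widehat{v}_h|_{H^2(\widehat{K})}$. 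Since $G(\cdot,\widehat{v}_h)$ annihilates $P_3(\widehat{K})$, the Bramble--Hilbert lemma upgrades this to $|G(\widehat{u},\widehat{v}_h)|\leq C|\widehat{u}|_{H^4(\widehat{K})}|\widehat{v}_h|_{H^2(\widehat{K})}$. Scaling back (with $|\widehat{u}|_{H^4(\widehat{K})}=h^3|u|_{H^4(K)}$, $|\widehat{v}_h|_{H^2(\widehat{K})}=h|v_h|_{H^2(K)}$, and noting that the powers of $h$ turn $\tfrac13\int_{\widehat{K}}\frac{\partial^3\widehat{u}}{\partial\xi_1^2\partial\xi_2}\frac{\partial^3\widehat{v}_h}{\partial\xi_2^3}\,\mathrm{d}\xi_1\mathrm{d}\xi_2$ into $\tfrac{h^2}{3}\int_K\frac{\partial^3 u}{\partial x_1^2\partial x_2}\frac{\partial^3 v_h}{\partial x_2^3}\,\mathrm{d}x_1\mathrm{d}x_2$ after multiplication by $h^{-2}$) gives, on each $K$,
\begin{equation*}
\int_K\nabla^2(u-\Pi_K u):\nabla^2 v_h\,\mathrm{d}x_1\mathrm{d}x_2=\frac{h^2}{3}\int_K\Big(\frac{\partial^3 u}{\partial x_1\partial x_2^2}\frac{\partial^3 v_h}{\partial x_1^3}+\frac{\partial^3 u}{\partial x_1^2\partial x_2}\frac{\partial^3 v_h}{\partial x_2^3}\Big)\,\mathrm{d}x_1\mathrm{d}x_2+h^{-2}G(\widehat{u},\widehat{v}_h),
\end{equation*}
with $|h^{-2}G(\widehat{u},\widehat{v}_h)|\leq Ch^2|u|_{H^4(K)}|v_h|_{H^2(K)}$. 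Summing over $K\in\mathcal{T}_h$ and applying the Cauchy--Schwarz inequality together with $|v_h|_h^2=\sum_{K}|v_h|_{H^2(K)}^2$ yields (\ref{eq33}). I expect the main obstacle to be precisely this reference-element identity: all of the $O(h^2)$-level information — in particular the constant $\tfrac13$ and the exact pair of third-order derivatives appearing — is encoded in the explicit value of $\widehat{\Pi}_{\widehat{K}}(\xi_1^2\xi_2)$, and one must track carefully the scalings of $\nabla_\xi^2$, $\partial_\xi^3$ and $\mathrm{d}\xi_1\mathrm{d}\xi_2$ so that the extracted term matches $\tfrac{h^2}{3}\int_K(\cdot)$ exactly; the boundedness estimate, the Bramble--Hilbert step and the final summation are routine.
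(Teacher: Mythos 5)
The paper itself does not prove Lemma \ref{lem10}: it is imported verbatim as an asymptotic expansion result from \cite{HuYang}, so there is no in-paper argument to compare against. Your blind proof is, as far as I can check, correct and self-contained, and it is exactly in the spirit of the Bramble--Hilbert functionals $B_1$, $B_2$, $B_4$ that the paper does use, so it is presumably close to the proof in \cite{HuYang}. The decisive computation checks out: on $\widehat K=[-1,1]^2$ the function $\tfrac43\xi_2-\tfrac13\xi_2^3$ matches the vertex values $\pm1$ of $\xi_1^2\xi_2$ and its edge-mean normal derivatives ($0$ on $\widehat e_2,\widehat e_4$ and $\pm\tfrac13$ on $\widehat e_3,\widehat e_1$), so $\widehat\Pi_{\widehat K}(\xi_1^2\xi_2)=\tfrac43\xi_2-\tfrac13\xi_2^3$; pairing the Hessian of the error with $\nabla^2\widehat v_h$ for $\widehat v_h\in P(\widehat K)$ with $\xi_2^3$-coefficient $b$ gives $16b$, which equals $\tfrac13\int_{\widehat K}2\cdot 6b\,\mathrm{d}\xi$, and this is precisely where the constant $\tfrac13$ and the pair of mixed third derivatives come from (the mixed-Hessian contribution integrates to zero, so the weight $2$ in the Frobenius product is harmless). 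Your corrected functional $G(\cdot,\widehat v_h)$ then vanishes on all of $P_3(\widehat K)$ (the $P_2$ part and $\xi_1^3,\xi_2^3$ are reproduced and have vanishing mixed third derivatives), is bounded by $C\|\widehat u\|_{H^3(\widehat K)}|\widehat v_h|_{H^2(\widehat K)}$ using the inverse estimate $|\widehat v_h|_{H^3(\widehat K)}\le C|\widehat v_h|_{H^2(\widehat K)}$ on the finite-dimensional space, and Bramble--Hilbert plus the scalings $|\widehat u|_{H^4(\widehat K)}=h^3|u|_{H^4(K)}$, $|\widehat v_h|_{H^2(\widehat K)}=h|v_h|_{H^2(K)}$, $\mathrm{d}x=h^2\mathrm{d}\xi$ yields the per-element expansion with remainder $Ch^2|u|_{H^4(K)}|v_h|_{H^2(K)}$; two small points worth making explicit are that $\Pi_K$ commutes with the map (\ref{eq2}) because it is a pure dilation plus translation, under which vertex values and edge-mean normal derivatives transform consistently, and that your equality-with-remainder form is stronger than, and immediately implies, the stated inequality after summation and Cauchy--Schwarz. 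So you have supplied a correct proof of a statement the paper only cites.
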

It is straightforward from Lemma \ref{lem10} to derive that by the inverse inequality
\begin{equation}
a_h(u-\Pi_h u,v_h)\leq Ch\Big(\Big|\Big|\frac{\partial^3u}{\partial x_1\partial x_2^2}\Big|\Big|_{L^2(\Omega)}+\Big|\Big|\frac{\partial^3u}{\partial x_1^2\partial x_2}\Big|\Big|_{L^2(\Omega)}\Big)|v_h|_h. \label{eq68}
\end{equation}

Based on the above analysis and Theorem \ref{thm1}, Lemma \ref{lem10}, we can get the following error estimate of $|\Pi_h u-u_h|_h$.
\begin{thm} \label{thm3}
Suppose that $u\in H^4(\Omega)$, then we have
\begin{equation}
|\Pi_h u-u_h|_h\leq Ch\Big(\Big|\Big|\frac{\partial^3u}{\partial x_1\partial x_2^2}\Big|\Big|_{L^2(\Omega)}+\Big|\Big|\frac{\partial^3u}{\partial x_1^2\partial x_2}\Big|\Big|_{L^2(\Omega)}\Big).
\end{equation}
\end{thm}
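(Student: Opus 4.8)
The plan is to use that $\Pi_h u - u_h$ is itself an element of $V_h$, so it may be taken as a test function, and that $a_h(\cdot,\cdot)$ is trivially coercive on $V_h$ in the norm $|\cdot|_h$ since $|v_h|_h^2 = a_h(v_h,v_h)$ by definition. First I would record that $\Pi_h u$ is well defined for $u\in H^4(\Omega)\subset C^1(\overline\Omega)$ and that $\Pi_h u\in V_h$: the vertex values of $\Pi_h u$ coincide with those of $u$, hence $\Pi_h u$ is continuous at interior vertices and vanishes at boundary vertices because $u\in H_0^2(\Omega)$; similarly $\int_e \partial(\Pi_h u)/\partial n_e\,\mathrm{d}s=\int_e \partial u/\partial n_e\,\mathrm{d}s$ is single-valued across interior edges and vanishes on boundary edges. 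Therefore $v_h:=\Pi_h u-u_h\in V_h$.

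Next, with $v_h=\Pi_h u-u_h$, I would split
\[
|v_h|_h^2=a_h(\Pi_h u-u_h,v_h)=a_h(\Pi_h u-u,v_h)+a_h(u-u_h,v_h).
\]
The first term equals $-a_h(u-\Pi_h u,v_h)$, which by Lemma \ref{lem10} together with the inverse inequality, i.e.\ the consequence (\ref{eq68}), is bounded by $Ch\big(\|\partial^3u/\partial x_1\partial x_2^2\|_{L^2(\Omega)}+\|\partial^3u/\partial x_1^2\partial x_2\|_{L^2(\Omega)}\big)|v_h|_h$. For the second term, the discrete equation (\ref{eq5}) with test function $v_h\in V_h$ gives $a_h(u_h,v_h)=(f,v_h)$, so $a_h(u-u_h,v_h)=a_h(u,v_h)-(f,v_h)$ is exactly the consistency error, which Theorem \ref{thm1} bounds by $Ch^2(\|f\|_{L^2(\Omega)}+|u|_{H^4(\Omega)})|v_h|_h$; since $f=\Delta^2u$ yields $\|f\|_{L^2(\Omega)}\le |u|_{H^4(\Omega)}$, this contribution is $O(h^2)|v_h|_h$.

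Finally, I would add the two estimates, divide by $|v_h|_h$ (the inequality being trivial when $|v_h|_h=0$), and absorb the $O(h^2)$ consistency term into the $O(h)$ interpolation term, obtaining $|\Pi_h u-u_h|_h\le Ch\big(\|\partial^3u/\partial x_1\partial x_2^2\|_{L^2(\Omega)}+\|\partial^3u/\partial x_1^2\partial x_2\|_{L^2(\Omega)}\big)$. I do not anticipate any real difficulty here, as the substantive work is already contained in Theorem \ref{thm1} and Lemma \ref{lem10}; the only points requiring attention are the verification that $\Pi_h u-u_h\in V_h$ so that it is an admissible test function, and the bookkeeping showing the $h^2$ terms are dominated by the $h$ term.
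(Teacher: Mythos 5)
Your proposal is correct and follows essentially the same route as the paper: test with $v_h=\Pi_h u-u_h\in V_h$, split $|v_h|_h^2=a_h(u-u_h,v_h)+a_h(\Pi_h u-u,v_h)$, bound the first term by Theorem \ref{thm1} (via the discrete equation) and the second by (\ref{eq68}) from Lemma \ref{lem10}, then divide by $|v_h|_h$. Your extra remarks on $\Pi_h u\in V_h$ and on dropping the $O(h^2)$ consistency contribution are just the bookkeeping the paper leaves implicit in its final two-term estimate.
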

\begin{proof}
It follows from Theorem \ref{thm1} and (\ref{eq68}) that
\begin{eqnarray*}
|\Pi_h u-u_h|_h^2&=&a_h(\Pi_h u-u_h,\Pi_h u-u_h)\\&=&a_h(u-u_h,\Pi_h u-u_h)+a_h(\Pi_h u-u,\Pi_h u-u_h)\\&=&[a_h(u,\Pi_h u-u_h)-(f,\Pi_h u-u_h)]+a_h(\Pi_h u-u,\Pi_h u-u_h)\\&\leq&Ch^2(||f||_{L^2(\Omega)}+|u|_{H^4(\Omega)})|\Pi_h u-u_h|_h\\&&+Ch\Big(\Big|\Big|\frac{\partial^3u}{\partial x_1\partial x_2^2}\Big|\Big|_{L^2(\Omega)}+\Big|\Big|\frac{\partial^3u}{\partial x_1^2\partial x_2}\Big|\Big|_{L^2(\Omega)}\Big)|\Pi_h u-u_h|_h,
\end{eqnarray*}
which completes the proof.
\end{proof}
\section{Supercloseness  of the correction interpolation}
In the view of Theorem \ref{thm3}, we cannot expect a higher order error estimate of $|\Pi_h u-u_h|_h$. To overcome this difficulty, we follow the idea of \cite{Chen} to make a correction of the interpolation $\Pi_h u$.
First of all, we define the correction term as follows
\begin{equation}
R_Kv=\sum\limits_{j=5}^8a_j(v)\varphi_j,\quad v\in H^4(K),
\end{equation}
where $a_j(v)$ read
\begin{eqnarray}
&&\left\{
\begin{array}{lll}
a_j=-\frac{h}{6}\int_{e_2}\frac{\partial^3 v}{\partial x_1\partial x_2^2}\;\mathrm{d}x_2,\;\;j=5,7,\vspace{2mm}\\
a_j=-\frac{h}{6}\int_{e_1}\frac{\partial^3 v}{\partial x_1^2\partial x_2}\;\mathrm{d}x_1,\;\;j=6,8,\label{eq29}
\end{array}
\right.\\\nonumber
\end{eqnarray}
and the basis functions $\varphi_j$ read
\begin{eqnarray*}
&&\left\{
\begin{array}{lll}
\varphi_5=\frac{h}{4}(\xi_1+1)^2(\xi_1-1),\vspace{2mm}\\
\varphi_6=\frac{h}{4}(\xi_2+1)^2(\xi_2-1),\vspace{2mm}\\
\varphi_7=\frac{h}{4}(\xi_1+1)(\xi_1-1)^2,\vspace{2mm}\\
\varphi_8=\frac{h}{4}(\xi_2+1)(\xi_2-1)^2.
\end{array}
\right.\\
\end{eqnarray*}
$ \xi_1=\frac{x_1-x_{1,c}}{h},\;\;\xi_2=\frac{x_2-x_{2,c}}{h},$
where we defined in (\ref{eq2}).\\
Then the global version $R_h$ is defined as
\begin{equation}
 R_h|_K=R_K,\;\; \text{for any}\; K \in \mathcal{T}_h.
\end{equation}
Define the correction interpolation $\Pi_K^*v$  as follows, for all $v\in H^4(K)$,
\begin{equation}
\Pi_K^*v=\Pi_K v-R_Kv,\;\;K\in \mathcal{T}_h,
\end{equation}
Then the global version $\Pi_h^*$ is defined as
\begin{equation}
 \Pi_h^*|_K=\Pi^*_K,\;\; \text{for any}\; K \in \mathcal{T}_h.
\end{equation}
Regarding the correction term $R_h$, we have the following lemma.
\begin{lem}\label{lem5}
Suppose that $u\in H^4(\Omega)$, then for all $ v_h\in V_h$, we have
\begin{equation}
a_h(R_hu,v_h)=\frac{1}{2}\sum_{K\in \mathcal{T}_h}\int_K(a_5+a_7)\frac{\partial^3 v_h}{\partial x_1^3}\;\mathrm{d}x_1\mathrm{d}x_2+\frac{1}{2}\sum_{K\in \mathcal{T}_h}\int_K(a_6+a_8)\frac{\partial^3 v_h}{\partial x_2^3}\;\mathrm{d}x_1\mathrm{d}x_2. \label{eq34}
\end{equation}
\end{lem}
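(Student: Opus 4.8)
The plan is to compute $a_h(R_hu,v_h)=\sum_{K}(\nabla^2 R_Ku,\nabla^2 v_h)_{L^2(K)}$ element by element and exploit the very special structure of the correction basis functions $\varphi_5,\dots,\varphi_8$. First I would observe that each $\varphi_j$ depends on only one of the two variables: $\varphi_5,\varphi_7$ are cubic polynomials in $\xi_1$ (hence in $x_1$) alone, and $\varphi_6,\varphi_8$ are cubic in $\xi_2$ alone. Consequently $\nabla^2\varphi_5$ has a single nonzero entry, namely $\partial^2\varphi_5/\partial x_1^2$, and similarly for the others; there are no mixed second derivatives and no cross terms. So on a fixed $K$,
\begin{equation*}
(\nabla^2 R_Ku,\nabla^2 v_h)_{L^2(K)}=\int_K\Big[(a_5+a_7)\tfrac{\partial^2\varphi_5}{\partial x_1^2}\cdot\tfrac1{?}\Big]\cdots
\end{equation*}
— more precisely, since $\varphi_5$ and $\varphi_7$ are scalar multiples of $(\xi_1+1)^2(\xi_1-1)$ and $(\xi_1+1)(\xi_1-1)^2$, their second $x_1$-derivatives are affine in $x_1$, and one checks by direct differentiation that $\partial^2\varphi_5/\partial x_1^2$ and $\partial^2\varphi_7/\partial x_1^2$ combine into the simple expression needed below. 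The key arithmetic fact to verify is that, after scaling $\xi_1=(x_1-x_{1,c})/h$, one has $\tfrac{\partial^2}{\partial x_1^2}(\varphi_5+\varphi_7)=\text{const}$ and, crucially, that the precise constant produces the factor $\tfrac12$ and converts the remaining $\partial^2 v_h/\partial x_1^2$ into $\partial^3 v_h/\partial x_1^3$ after the next step.

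The second ingredient is an integration by parts (or a direct polynomial identity) in the $x_1$-variable: since $\varphi_5+\varphi_7$ restricted to $K$ is a fixed cubic in $x_1$ times $h$, integrating $\int_K\tfrac{\partial^2(\varphi_5+\varphi_7)}{\partial x_1^2}\,\tfrac{\partial^2 v_h}{\partial x_1^2}\,\mathrm{d}x_1\mathrm{d}x_2$ by parts once in $x_1$ moves a derivative onto $v_h$. The boundary contributions on the two faces $e_2,e_4$ vanish because $\varphi_5+\varphi_7$ vanishes at $\xi_1=\pm1$ — indeed $(\xi_1+1)^2(\xi_1-1)$ and $(\xi_1+1)(\xi_1-1)^2$ each vanish at both endpoints — so only the volume term $\int_K(\text{first derivative of }\varphi_5+\varphi_7)\,\tfrac{\partial^3 v_h}{\partial x_1^3}$ survives; since $v_h\in P(K)=P_2(K)+\mathrm{span}\{x_1^3,x_2^3\}$, the quantity $\partial^3 v_h/\partial x_1^3$ is a constant on $K$, so one then only needs the average of the first $x_1$-derivative of $\varphi_5+\varphi_7$ over $K$. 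A short computation of $\int_{-1}^1 \tfrac{d}{d\xi_1}\big[(\xi_1+1)^2(\xi_1-1)+(\xi_1+1)(\xi_1-1)^2\big]\,\mathrm{d}\xi_1$, together with the $h$-factors from the scaling and from the definition $\varphi_5=\tfrac h4(\cdots)$, should yield exactly the coefficient $\tfrac12$. The $x_2$-direction is handled identically with $\varphi_6+\varphi_8$, giving the second sum with $(a_6+a_8)\,\partial^3 v_h/\partial x_2^3$.

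I expect the only real obstacle to be purely a bookkeeping one: tracking all the powers of $h$ correctly (one from $\varphi_j=\tfrac h4(\cdots)$, and the Jacobian factors $h^{\pm k}$ coming from differentiating in $x_i$ versus $\xi_i$ and from $\mathrm{d}x_1\mathrm{d}x_2=h^2\,\mathrm{d}\xi_1\mathrm{d}\xi_2$) so that the constant emerging from the reference-element integral is precisely $\tfrac12$ and not some other rational multiple. Once the reference-element identity
\begin{equation*}
\int_{\widehat K}\frac{\partial^2(\varphi_5+\varphi_7)}{\partial \xi_1^2}\,\frac{\partial^2\widehat w}{\partial\xi_1^2}\,\mathrm{d}\xi_1\mathrm{d}\xi_2=\tfrac12\int_{\widehat K}(a_5+a_7)\,\frac{\partial^3\widehat w}{\partial\xi_1^3}\,\mathrm{d}\xi_1\mathrm{d}\xi_2
\end{equation*}
is pinned down (recalling $a_5=a_7$ from \eqref{eq29}, so $a_5+a_7=2a_5$), summing over $K\in\mathcal T_h$ and adding the analogous $x_2$-contribution gives \eqref{eq34} directly; no continuity of $v_h$ across element boundaries is needed since $R_h$ produces no inter-element coupling. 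I would therefore structure the write-up as: (i) reduce to one element and one coordinate direction; (ii) record $\nabla^2\varphi_j$ and note the absence of mixed terms; (iii) integrate by parts in $x_1$, killing the boundary terms via $\varphi_j|_{\xi_1=\pm1}=0$; (iv) use $\partial^3 v_h/\partial x_1^3=\text{const}$ and the explicit reference integral to extract the factor $\tfrac12$; (v) repeat for $x_2$ and sum.
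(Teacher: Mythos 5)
Your reduction to one element and one coordinate direction, and your observation that the mixed second derivatives of $\varphi_5,\dots,\varphi_8$ vanish so the cross term drops out, both match the paper. But the central step of your plan fails. Integrating by parts once in $x_1$ gives, on a single element,
\begin{equation*}
\int_K\frac{\partial^2(\varphi_5+\varphi_7)}{\partial x_1^2}\,\frac{\partial^2 v_h}{\partial x_1^2}\,\mathrm{d}x_1\mathrm{d}x_2
=\Big[\int_{e_2}-\int_{e_4}\Big]\frac{\partial(\varphi_5+\varphi_7)}{\partial x_1}\,\frac{\partial^2 v_h}{\partial x_1^2}\,\mathrm{d}x_2
-\int_K\frac{\partial(\varphi_5+\varphi_7)}{\partial x_1}\,\frac{\partial^3 v_h}{\partial x_1^3}\,\mathrm{d}x_1\mathrm{d}x_2,
\end{equation*}
and the boundary term involves the \emph{first derivative} of $\varphi_5+\varphi_7$ at $\xi_1=\pm1$, not the function values. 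Since $\varphi_5+\varphi_7=\tfrac{h}{2}(\xi_1^3-\xi_1)$, one has $\partial(\varphi_5+\varphi_7)/\partial x_1=\tfrac12(3\xi_1^2-1)=1$ at $\xi_1=\pm1$, so the boundary contributions do \emph{not} vanish; the vanishing of $\varphi_5+\varphi_7$ at $\xi_1=\pm1$ is irrelevant to them. Worse, that very vanishing kills, via the fundamental theorem of calculus, exactly the volume term you propose to keep: $\int_{-1}^{1}\frac{d}{d\xi_1}\big[(\xi_1+1)^2(\xi_1-1)+(\xi_1+1)(\xi_1-1)^2\big]\,\mathrm{d}\xi_1=0$, and since $\partial^3 v_h/\partial x_1^3$ is constant on $K$ your surviving term is identically zero. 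A direct check (write $\partial^2 v_h/\partial x_1^2=\overline{\partial^2 v_h/\partial x_1^2}+h(\partial^3 v_h/\partial x_1^3)\xi_1$, so $\partial^2 v_h/\partial x_1^2\big|_{e_2}-\partial^2 v_h/\partial x_1^2\big|_{e_4}=2h\,\partial^3 v_h/\partial x_1^3$) shows the entire value $\tfrac12\int_K(a_5+a_7)\partial^3 v_h/\partial x_1^3$ is carried by the boundary term you discard, so your route as written proves $a_h(R_hu,v_h)=0$, contradicting the lemma. A smaller slip in the same direction: $\partial^2(\varphi_5+\varphi_7)/\partial x_1^2=3\xi_1/h$ is linear in $\xi_1$, not constant.

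The paper avoids integration by parts altogether. It uses that on $K$ every $v_h\in P(K)$ satisfies $\partial^2 v_h/\partial x_i^2=\overline{\partial^2 v_h/\partial x_i^2}+h(\partial^3 v_h/\partial x_i^3)\xi_i$, computes $\partial^2 R_Ku/\partial x_1^2=\tfrac{h^{-1}}{4}\big[6\xi_1(a_5+a_7)+2(a_5-a_7)\big]$ explicitly, kills the odd-in-$\xi_1$ products by the symmetry of $K$ and the term $2(a_5-a_7)\overline{\partial^2 v_h/\partial x_1^2}$ by $a_5=a_7$, and the single surviving even term $\tfrac{h^{-1}}{4}\int_K6\xi_1^2(a_5+a_7)h\,\partial^3 v_h/\partial x_1^3$ yields the factor $\tfrac12$ from the mean value of $\xi_1^2$; the $x_2$ direction and the vanishing cross term are handled as you anticipated. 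If you wish to salvage an integration-by-parts argument, you must retain and evaluate the edge terms on $e_2,e_4$ (using the affine-in-$\xi_1$ structure of $\partial^2 v_h/\partial x_1^2$); as it stands, the proposal has the mechanism exactly inverted.
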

\begin{proof}
Let $\xi_1$ and $\xi_2 $  be defined as in $(\ref{eq2})$. It follows from the definition of $P(K)$ that
\begin{eqnarray}
\frac{\partial^2v_h}{\partial x_i^2}&=&\overline{\frac{\partial^2v_h}{\partial x_i^2}}+h\frac{\partial^3v_h}{\partial x_i^3}\xi_i,\quad i=1,2.
\end{eqnarray}
The definition of $a_h(\cdot,\cdot)$ yields
\begin{equation}
\begin{split}
a_h(R_hu,v_h)&=\sum_{K\in\mathcal{T}_h}\bigg(\int_K\frac{\partial^2 R_Ku}{\partial x_1^2}\frac{\partial^2 v_h}{\partial x_1^2}\;\mathrm{d}x_1\mathrm{d}x_2+2\int_K\frac{\partial^2 R_Ku}{\partial x_1\partial x_2}\frac{\partial^2 v_h}{\partial x_1\partial x_2}\;\mathrm{d}x_1\mathrm{d}x_2\\&\quad\;\;+\int_K\frac{\partial^2 R_Ku}{\partial x_2^2}\frac{\partial^2 v_h}{\partial x_2^2}\;\mathrm{d}x_1\mathrm{d}x_2\bigg).\label{eq26}
\end{split}
\end{equation}
We are in the position to calculate three terms on the right-hand side of (\ref{eq26}). It follows the definition of $R_K$ that
\begin{eqnarray*}
\int_K\frac{\partial^2 R_Ku}{\partial x_1^2}\frac{\partial^2 v_h}{\partial x_1^2}\;\mathrm{d}x_1\mathrm{d}x_2&=&\frac{h^{-1}}{4}\int_K\bigg[(6\xi_1+2)a_5+(6\xi_1-2)a_7\bigg]\frac{\partial^2 v_h}{\partial x_1^2}\;\mathrm{d}x_1\mathrm{d}x_2\nonumber\\&=&\frac{h^{-1}}{4}\int_K\bigg[6\xi_1(a_5+a_7)+2(a_5-a_7)\bigg]\bigg[\overline{\frac{\partial^2v_h}{\partial x_1^2}}+h\frac{\partial^3v_h}{\partial x_1^3}\xi_1\bigg]\;\mathrm{d}x_1\mathrm{d}x_2\nonumber\\&=&\frac{h^{-1}}{4}\int_K 6\xi_1(a_5+a_7)\overline{\frac{\partial^2v_h}{\partial x_1^2}}\;\mathrm{d}x_1\mathrm{d}x_2+\frac{h^{-1}}{4}\int_K6\xi_1^2(a_5+a_7)h\frac{\partial^3v_h}{\partial x_1^3}\;\mathrm{d}x_1\mathrm{d}x_2\nonumber\\&&+\frac{h^{-1}}{4}\int_K2(a_5-a_7)\overline{\frac{\partial^2v_h}{\partial x_1^2}}\;\mathrm{d}x_1\mathrm{d}x_2+\frac{h^{-1}}{4}\int_K2(a_5-a_7)h\frac{\partial^3v_h}{\partial x_1^3}\xi_1\;\mathrm{d}x_1\mathrm{d}x_2.
\end{eqnarray*}
Since coefficients like $\overline{\frac{\partial^2v_h}{\partial x_1^2}}$ and $\frac{\partial^3v_h}{\partial x_1^3}$ are constants, we can get that by parity of function and symmetry of domains:
\begin{equation*}
\int_K6\xi_1(a_5+a_7)\;\mathrm{d}x_1\mathrm{d}x_2=0,\quad \int_K2(a_5-a_7)h\xi_1\;\mathrm{d}x_1\mathrm{d}x_2=0.
\end{equation*}
Because of $a_5=a_7$, hence, only one nonzero term is left, which reads
\begin{equation}
\frac{h^{-1}}{4}\int_K6\xi_1^2(a_5+a_7)h\frac{\partial^3v_h}{\partial x_1^3}\;\mathrm{d}x_1\mathrm{d}x_2=\frac{1}{2}\int_K(a_5+a_7)\frac{\partial^3v_h}{\partial x_1^3}\;\mathrm{d}x_1\mathrm{d}x_2.\label{eq28}
\end{equation}
This yields
\begin{equation}
\int_K\frac{\partial^2 R_Ku}{\partial x_1^2}\frac{\partial^2 v_h}{\partial x_1^2}\;\mathrm{d}x_1\mathrm{d}x_2=\frac{1}{2}\int_K(a_5+a_7)\frac{\partial^3v_h}{\partial x_1^3}\;\mathrm{d}x_1\mathrm{d}x_2. \label{eq30}
\end{equation}
A similar argument proves
\begin{equation}
\int_K\frac{\partial^2 R_Ku}{\partial x_2^2}\frac{\partial^2 v_h}{\partial x_2^2}\;\mathrm{d}x_1\mathrm{d}x_2=\frac{1}{2}\int_K(a_6+a_8)\frac{\partial^3v_h}{\partial x_2^3}\;\mathrm{d}x_1\mathrm{d}x_2. \label{eq31}
\end{equation}
Note that the basis functions $\varphi_j(j=5,6,7,8)$ have no mixed terms, which leads to $\frac{\partial^2 R_K}{\partial x_1\partial x_2}=0$.
Thus
\begin{equation}
\int_K\frac{\partial^2 R_Ku}{\partial x_1\partial x_2}\frac{\partial^2 v_h}{\partial x_1\partial x_2}\;\mathrm{d}x_1\mathrm{d}x_2=0,
\end{equation}
which completes the proof.

\end{proof}
Based on the above analysis, we can establish superclose results of the rectangular Morley element by the correction interpolation $\Pi_h^*u$.
\begin{thm}\label{thm4}
Let $u\in H^4(\Omega)$, $u_h\in V_h$, be the solutions of (\ref{eq1}) and (\ref{eq5}), respectively, then we have
\begin{equation}
|\Pi_h^*u-u_h|_h\leq Ch^2\big(||f||_{L^2(\Omega)}+|u|_{H^4(\Omega)}\big).
\end{equation}
\end{thm}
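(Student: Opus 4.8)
The plan is to estimate $|\Pi_h^*u - u_h|_h$ by the same coercivity/duality trick used in Theorem \ref{thm3}, but now the correction term $R_h u$ is designed precisely to cancel the $O(h)$ leading term in the asymptotic expansion of Lemma \ref{lem10}. First I would write
\begin{equation*}
|\Pi_h^*u - u_h|_h^2 = a_h(\Pi_h^*u - u_h, \Pi_h^*u - u_h) = a_h(u - u_h, v_h) + a_h(\Pi_h^*u - u, v_h),
\end{equation*}
where $v_h := \Pi_h^*u - u_h \in V_h$. The first summand equals $a_h(u,v_h) - (f,v_h)$ by the discrete equation (\ref{eq5}), and Theorem \ref{thm1} bounds it by $Ch^2(\|f\|_{L^2(\Omega)} + |u|_{H^4(\Omega)})|v_h|_h$. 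So everything reduces to showing
\begin{equation*}
a_h(\Pi_h^*u - u, v_h) = a_h(\Pi_h u - u, v_h) - a_h(R_h u, v_h) \le Ch^2|u|_{H^4(\Omega)}|v_h|_h.
\end{equation*}

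Here is where the two preceding lemmas combine. By Lemma \ref{lem10}, $a_h(\Pi_h u - u, v_h) = -a_h(u - \Pi_h u, v_h)$ equals
\begin{equation*}
-\sum_{K}\frac{h^2}{3}\int_K \frac{\partial^3 u}{\partial x_1 \partial x_2^2}\frac{\partial^3 v_h}{\partial x_1^3}\,\mathrm{d}x_1\mathrm{d}x_2 - \sum_{K}\frac{h^2}{3}\int_K \frac{\partial^3 u}{\partial x_1^2 \partial x_2}\frac{\partial^3 v_h}{\partial x_2^3}\,\mathrm{d}x_1\mathrm{d}x_2 + Ch^2|u|_{H^4(\Omega)}|v_h|_h,
\end{equation*}
up to sign bookkeeping on the $O(h^2)$ remainder. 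On the other hand, Lemma \ref{lem5} gives $a_h(R_h u, v_h) = \frac12\sum_K \int_K (a_5 + a_7)\frac{\partial^3 v_h}{\partial x_1^3}\,\mathrm{d}x_1\mathrm{d}x_2 + \frac12\sum_K \int_K (a_6 + a_8)\frac{\partial^3 v_h}{\partial x_2^3}\,\mathrm{d}x_1\mathrm{d}x_2$. Since $a_5 = a_7 = -\frac{h}{6}\int_{e_2}\frac{\partial^3 u}{\partial x_1 \partial x_2^2}\,\mathrm{d}x_2$, we have $\frac12(a_5 + a_7) = -\frac{h}{6}\int_{e_2}\frac{\partial^3 u}{\partial x_1 \partial x_2^2}\,\mathrm{d}x_2$; the key identity I would verify is that this equals, up to an $O(h^2|u|_{H^4(K)})$ term, $\frac{h^2}{3}\cdot\overline{\frac{\partial^3 u}{\partial x_1\partial x_2^2}}$ (the elementwise mean), which is exactly the coefficient appearing in Lemma \ref{lem10} once one replaces $\frac{\partial^3 u}{\partial x_1\partial x_2^2}$ there by its mean at the cost of $Ch^2|u|_{H^4(K)}|v_h|_h$ (using $\|\frac{\partial^3 v_h}{\partial x_1^3}\|_{L^2(K)} \le Ch^{-1}|v_h|_{H^2(K)}$ and a Bramble--Hilbert/trace estimate for the gap between the edge average and the element average). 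Hence the two leading $O(h^2)$ terms cancel, and the analogous cancellation holds in the $x_2$-direction.

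The main obstacle I expect is precisely this matching step: confirming that the edge-average normalization in the definition (\ref{eq29}) of $a_j$ and the $\frac12$ and $\frac{h^{-1}}{4}$ constants produced in Lemma \ref{lem5} reproduce the $\frac{h^2}{3}$ constant of Lemma \ref{lem10} with the correct sign, and controlling the difference between $\int_{e_2}\frac{\partial^3 u}{\partial x_1\partial x_2^2}\,\mathrm{d}x_2$ (a face integral over one edge) and $\int_K \frac{\partial^3 u}{\partial x_1\partial x_2^2}\,\mathrm{d}x_1\mathrm{d}x_2$ (a volume integral) — this requires a trace inequality plus the Bramble--Hilbert lemma on the reference element to show the discrepancy is $O(h)$ relative to the natural scaling, hence $O(h^2)$ after multiplication by the inverse-estimate factor. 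Once this is in hand, collecting the cancellation with the two $O(h^2)$ remainders from Lemma \ref{lem10}, Lemma \ref{lem5}, and Theorem \ref{thm1}, and dividing through by $|v_h|_h = |\Pi_h^*u - u_h|_h$, yields the claimed bound.
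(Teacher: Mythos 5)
Your proposal is correct and follows essentially the same route as the paper: the identical splitting $|\Pi_h^*u-u_h|_h^2=[a_h(u,\Pi_h^*u-u_h)-(f,\Pi_h^*u-u_h)]+a_h(\Pi_h^*u-u,\Pi_h^*u-u_h)$ with Theorem \ref{thm1} for the first term and the cancellation between the leading terms of Lemma \ref{lem10} and Lemma \ref{lem5} for the second; the paper merely packages that cancellation as one combined functional $B_2$ treated by the Bramble--Hilbert lemma on the reference element, which is the same mechanism as your edge-average versus element-average comparison combined with the inverse estimate. Only fix the sign in your ``key identity'': since $\tfrac12(a_5+a_7)=-\tfrac{h}{6}\int_{e_2}\frac{\partial^3u}{\partial x_1\partial x_2^2}\,\mathrm{d}x_2=-\tfrac{h^2}{3}\,\Pi^0_{e_2}\frac{\partial^3u}{\partial x_1\partial x_2^2}$, it matches $-\tfrac{h^2}{3}$ times the element mean (not $+\tfrac{h^2}{3}$), which is precisely what your decomposition with the term $-a_h(R_hu,v_h)$ requires for the cancellation, so the constants and signs do work out as you anticipated.
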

\begin{rem}
Comparing with the incomplete biquadratic plate element \cite{MaoShi}, herein, the theorem does not require $\frac{\partial^3u}{\partial n^3}$ to be zero on the boundary. Besides, the correction interpolation $\Pi_h^*u$ still belongs to the space $V_h$. Because of the boundary condition $\frac{\partial u}{\partial n}\big|_{\partial \Omega}=0$,  it can be deduced that
$\frac{\partial u}{\partial x_1}\big|_{e_j}=0$ and $\frac{\partial u}{\partial x_2}\big|_{e_i}=0$, $e_i,e_j\in \partial \Omega,\;i=1,3,\;j=2,4$. Thus,
$\frac{\partial^3 u}{\partial x_1\partial x_2^2}\big|_{e_j}=0$ and $\frac{\partial^3 u}{\partial x_1^2\partial x_2}\big|_{e_i}=0$, $e_i,e_j\in \partial \Omega,\;i=1,3,\;j=2,4$.
\end{rem}
\begin{proof}
On the reference element $\widehat{K}$, consider the functional
\begin{eqnarray*}
B_2(\widehat{u},\widehat{v}_h)&=&\frac{1}{3}\int_{\widehat{K}}\frac{\partial^3\widehat{u}}{\partial\xi_1\partial\xi_2^2}\frac{\partial^3\widehat{v}_h}{\partial\xi_1^3}\;\mathrm{d}\xi_1\mathrm{d}\xi_2
+\frac{1}{3}\int_{\widehat{K}}\frac{\partial^3\widehat{u}}{\partial\xi_1^2\partial\xi_2}\frac{\partial^3\widehat{v}_h}{\partial\xi_2^3}\;\mathrm{d}\xi_1\mathrm{d}\xi_2\\&&-\frac{1}{6}\int_{\widehat{K}}
\bigg(\int_{\widehat{e}_2}\frac{\partial^3\widehat{u}}{\partial\xi_1\partial\xi_2^2}\;\mathrm{d}\xi_2\bigg)\frac{\partial^3\widehat{v}_h}{\partial\xi_1^3}\;\mathrm{d}\xi_1\mathrm{d}\xi_2-\frac{1}{6}\int_{\widehat{K}}
\bigg(\int_{\widehat{e}_1}\frac{\partial^3\widehat{u}}{\partial\xi_1^2\partial\xi_2}\;\mathrm{d}\xi_1\bigg)\frac{\partial^3\widehat{v}_h}{\partial\xi_2^3}\;\mathrm{d}\xi_1\mathrm{d}\xi_2
\end{eqnarray*}
It can be checked that
\begin{eqnarray*}
&&\left\{
\begin{array}{lll}
B_2(\widehat{u},\widehat{v}_h)\leq c||\widehat{u}||_{H^3(\widehat{K})}|\widehat{v}_h|_{H^2(\widehat{K})},\\
B_2(\widehat{u},\widehat{v}_h)=0,\;\;\forall\widehat{u}\in P_3(\widehat{K}),\;\;\forall\widehat{v}_h\in V_h.
\end{array}
\right.\\
\end{eqnarray*}
Hence, the Bramble-Hilbert lemma gives
 \begin{equation}
 B_2(\widehat{u},\widehat{v}_h)\leq C|\widehat{u}|_{H^4(\widehat{K})}|\widehat{v}_h|_{H^2(\widehat{K})}. \label{eq13}
 \end{equation}
 A scaling argument leads to
 \begin{equation*}
 \begin{split}
& \frac{h^2}{3}\int_K\frac{\partial^3u}{\partial x_1\partial x_2^2}\frac{\partial^3v_h}{\partial x_1^3}\;\mathrm{d} x_1\mathrm{d} x_2
+\frac{h^2}{3}\int_K\frac{\partial^3u}{\partial x_1^2\partial x_2}\frac{\partial^3v_h}{\partial x_2^3}\;\mathrm{d} x_1\mathrm{d} x_2\\&+\frac{1}{2}\int_K
(a_5+a_7)\frac{\partial^3v_h}{\partial x_1^3}\;\mathrm{d} x_1\mathrm{d}x_2+\frac{1}{2}\int_K
(a_6+a_8)\frac{\partial^3v_h}{\partial x_2^3}\;\mathrm{d}x_1\mathrm{d}x_2\\
 &\leq Ch^2|u|_{H^4(K)}|v_h|_h.
 \end{split}
 \end{equation*}
 An application of Lemma \ref{lem10} and Lemma \ref{lem5} yields
 \begin{equation}
 a_h(u-\Pi_h^*u,v_h)=a_h(u-\Pi_h u,v_h)+a_h(R_hu,v_h)\leq Ch^2|u|_{H^4(\Omega)}|v_h|_h. \label{eq32}
 \end{equation}
 Then, together with Theorem \ref{thm1} and (\ref{eq32}), this gives
 \begin{eqnarray*}
 |\Pi^*_h u-u_h|_h^2&=&a_h(\Pi^*_h u-u_h,\Pi^*_h u-u_h)\\&=&a_h(u-u_h,\Pi^*_h u-u_h)+a_h(\Pi^*_h u-u,\Pi^*_h u-u_h)\\&=&[a_h(u,\Pi^*_h u-u_h)-(f,\Pi^*_h u-u_h)]+a_h(\Pi^*_h u-u,\Pi^*_h u-u_h)\\&\leq&Ch^2(||f||_{L^2(\Omega)}+|u|_{H^4(\Omega)})|\Pi^*_h u-u_h|_h,
 \end{eqnarray*}
 which completes the proof.
\end{proof}
Based on the superclose property, we can obtain the superconvergence result of the two-dimensional rectangular Morley element by a proper postprocessing technique.  In order to attain the global superconvergence, we follow the idea of \cite{LinYan1996} to construct the postprocessing operator $\Pi_{3h}^3$ as follows.\\
We merge 9 adjacent elements into a macro element, $\widetilde{K}=\bigcup\limits_{i=1}^{9}K_i$, (see Figure 2 ), such that, in the macro element $\widetilde{K}$,
\begin{equation}
\Pi_{3h}^3w\in Q_3(\widetilde{K}),\quad \forall w \in C(\widetilde{K}).
\end{equation}
We denote $Z_{ij},i,j=1,2,3,4$ as the vertices of the 9 adjacent elements. Then, the operator $\Pi_{3h}^3$ satisfies
\begin{equation}
\Pi_{3h}^3w(Z_{ij})=w(Z_{ij}),\quad i,j=1,2,3,4.
\end{equation}
\begin{figure}
\begin{center}
\setlength{\unitlength}{1.2cm}
\begin{picture}(5,5)
\put(1,0){\line(1,0){3}}
\put(1,1){\line(1,0){3}}
\put(1,2){\line(1,0){3}}
\put(1,3){\line(1,0){3}}
\put(1,0){\line(0,1){3}}
\put(2,0){\line(0,1){3}}
\put(3,0){\line(0,1){3}}
\put(4,0){\line(0,1){3}}
\end{picture}
\end{center}
\caption{macro element $\widetilde{K}$}
\end{figure}
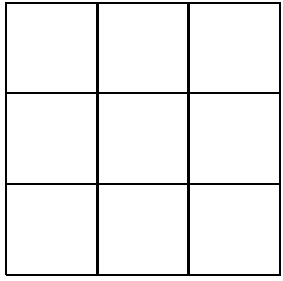
Besides, the postprocessing operator $\Pi_{3h}^3$ has the following properties
\begin{eqnarray}
&&\left\{
\begin{array}{lll}
\Pi_{3h}^3(\Pi^*_h u)=\Pi_{3h}^3u,\quad \forall u\in H^4(\Omega),\\
|\Pi_{3h}^3 v_h|_h\leq C|v_h|_h,\quad \forall v_h\in V_h,\\
|u-\Pi_{3h}^3 u|_h\leq Ch^2|u|_{H^4(\Omega)},\quad \forall u\in H^4(\Omega).\label{eq85}
\end{array}
\right.\\ \nonumber
\end{eqnarray}
Then, we can get the following global superconvergent result.
\begin{thm}
Let $u\in H^4(\Omega)$, $u_h\in V_h$, be the solutions of (\ref{eq1}) and (\ref{eq5}), respectively, then we have
\begin{equation}
|u-\Pi_{3h}^3u_h|_h\leq Ch^2\big(||f||_{L^2(\Omega)}+|u|_{H^4(\Omega)}\big).
\end{equation}
\end{thm}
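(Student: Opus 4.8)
The plan is to combine the supercloseness estimate of Theorem~\ref{thm4} with the three listed properties of the postprocessing operator $\Pi_{3h}^3$ in a standard triangle-inequality argument. First I would write
\begin{equation*}
u-\Pi_{3h}^3 u_h = (u-\Pi_{3h}^3 u) + \Pi_{3h}^3(u-\Pi_h^* u) + \Pi_{3h}^3(\Pi_h^* u - u_h),
\end{equation*}
using the identity $\Pi_{3h}^3(\Pi_h^* u)=\Pi_{3h}^3 u$ from the first property in (\ref{eq85}) to rewrite $-\Pi_{3h}^3 u$ as $-\Pi_{3h}^3(\Pi_h^* u)$. Actually it is cleaner to split directly as
\begin{equation*}
u-\Pi_{3h}^3 u_h = (u-\Pi_{3h}^3 u) + \big(\Pi_{3h}^3 u - \Pi_{3h}^3 u_h\big)
= (u-\Pi_{3h}^3 u) + \Pi_{3h}^3(\Pi_h^* u - u_h),
\end{equation*}
where the last equality uses $\Pi_{3h}^3 u = \Pi_{3h}^3(\Pi_h^* u)$ together with linearity of $\Pi_{3h}^3$.

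Then I would apply $|\cdot|_h$ and the triangle inequality:
\begin{equation*}
|u-\Pi_{3h}^3 u_h|_h \le |u-\Pi_{3h}^3 u|_h + |\Pi_{3h}^3(\Pi_h^* u - u_h)|_h.
\end{equation*}
The first term is bounded by $Ch^2|u|_{H^4(\Omega)}$ by the third property in (\ref{eq85}). For the second term, since $\Pi_h^* u - u_h \in V_h$ (here one uses that $\Pi_h^* u \in V_h$, as noted in the Remark following Theorem~\ref{thm4}, and $u_h\in V_h$), the stability bound $|\Pi_{3h}^3 v_h|_h \le C|v_h|_h$ gives
\begin{equation*}
|\Pi_{3h}^3(\Pi_h^* u - u_h)|_h \le C|\Pi_h^* u - u_h|_h \le Ch^2\big(\|f\|_{L^2(\Omega)}+|u|_{H^4(\Omega)}\big),
\end{equation*}
where the last inequality is precisely Theorem~\ref{thm4}. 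Combining the two bounds yields the claimed estimate.

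Since every ingredient is already in place, there is no real obstacle in the final argument; it is a routine assembly. The only point that deserves a word of care is verifying that $\Pi_h^* u - u_h$ genuinely lies in $V_h$ so that the stability property of $\Pi_{3h}^3$ (which is stated for arguments in $V_h$) applies — this is exactly why the Remark emphasizes that the correction $R_h u$ does not spoil membership in $V_h$, the boundary conditions on $u$ forcing the correction coefficients $a_5,\dots,a_8$ to vanish on boundary edges. If desired one could instead phrase the whole thing with the first property of (\ref{eq85}) written out explicitly as $\Pi_{3h}^3 u - \Pi_{3h}^3(\Pi_h^* u) = 0$, but the computation is identical.
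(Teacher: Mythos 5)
Your proposal is correct and follows essentially the same argument as the paper: the same splitting of $u-\Pi_{3h}^3u_h$ via the identity $\Pi_{3h}^3(\Pi_h^*u)=\Pi_{3h}^3u$, the triangle inequality, the approximation and stability properties in (\ref{eq85}), and the supercloseness estimate of Theorem \ref{thm4}. Your extra remark about checking $\Pi_h^*u-u_h\in V_h$ is a sensible point of care, but it does not change the route.
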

\begin{proof}
It follows the properties (\ref{eq85}) and Theorem \ref{thm4} that
\begin{eqnarray}
|u-\Pi_{3h}^3u_h|_h&\leq&|u-\Pi_{3h}^3\Pi^*_h u_h|_h+|\Pi_{3h}^3(\Pi^*_h u-u_h)|_h\nonumber\\&\leq&|u-\Pi_{3h}^3u|_h+C|\Pi^*_h u-u_h|_h\nonumber\\&\leq&Ch^2\big(||f||_{L^2(\Omega)}+|u|_{H^4(\Omega)}\big),
\end{eqnarray}
which completes the proof.
\end{proof}

\section{Superconvergence  of the cubic Morley element}
In this section, we analyze the superconvergence property of the three-dimensional Morley element on cubic meshes with $\Omega\subset \mathbb{R}^3$.
Let $\mathcal{T}_h$ be a regular uniform cubic triangulation of the domain $\Omega\subset \mathbb{R}^3$. Given $K\in \mathcal{T}_h$, let $(x_{1,c},x_{2,c},x_{3,c})$ be the center of $K$, the meshsize $h$ and affine mapping:\\
\begin{equation}\label{eq72}  \xi_1=\frac{x_1-x_{1,c}}{h},\quad\xi_2=\frac{x_2-x_{2,c}}{h},\quad\xi_3=\frac{x_3-x_{3,c}}{h},\;\; \text{for any}\; (x_1,x_2,x_3)\in K.\end{equation}
On element $K$, the shape function space of the cubic Morley element reads\\
\begin{equation}   P(K):=P_2(K)+\text{span}\{x_1^3,x_2^3,x_3^3,x_1x_2x_3\}.\end{equation}
The nodal parameters are: for any $v\in C^1(K)$,
\begin{equation}
D(v)=\bigg(v(a_i),\;\;\frac{1}{|F_j|}\int_{F_j} \frac{\partial v}{\partial n_{F_j}}\;\mathrm{d}s\bigg),\;\;i=1,\ldots,8,\;j=1,\ldots,6,
\end{equation}
where $a_j$ are vertices of $K$ and $F_j$ are faces of $K$, see Figure 3.
\begin{figure}
\begin{center}
\setlength{\unitlength}{2.5cm}
\begin{picture}(4,2)

\put(1,0.5){\line(1,0){2}} \put(1,0.5){\line(0,1){1}}
\put(3,0.5){\line(0,1){1}} \put(1,1.5){\line(1,0){2}}
\put(1,0.5){\circle*{0.1}}

\put(1,1.5){\circle*{0.1}}
\put(3,0.5){\circle*{0.1}}
\put(3,1.5){\circle*{0.1}}
\put(1,1.5){\line(1,1){0.3}}
\put(3,1.5){\line(1,1){0.3}}
\put(1.3,1.8){\line(1,0){2}}
\put(1.3,1.8){\circle*{0.1}}
\put(3.3,1.8){\circle*{0.1}}
\put(3,0.5){\line(1,1){0.3}}
\put(3.3,0.8){\circle*{0.1}}
\put(3.3,0.8){\line(0,1){1}}
\multiput(1,0.5)(0.18,0.18){2}{\line(1,1){0.15}}
\multiput(1.3,0.8)(0.14,0){15}{\line(1,0){0.1}}
\multiput(1.3,0.8)(0,0.14){7}{\line(0,1){0.1}}
\put(1.3,0.8){\circle*{0.1}}
\put(2.2,0.65){\vector(0,-1){0.3}}
\put(2.2,1.65){\vector(0,1){0.3}}
\put(1.15,1.15){\vector(-1,0){0.3}}
\put(3.15,1.15){\vector(1,0){0.3}}
\put(2,1){\vector(-1,-1){0.25}}
\put(2.3,1.3){\vector(1,1){0.25}}
\put(2.2,0.25){$\iint F_6$}
\put(2.2,1.95){$\iint F_3$}
\put(0.6,1.2){$\iint F_5$}
\put(3.45,1.15){$\iint F_2$}
\put(1.4,0.8){$\iint F_1$}
\put(2.6,1.6){$\iint F_4$}
\linethickness{0.6mm}
\end{picture}
\end{center}
\caption{degrees of freedom }
\end{figure}
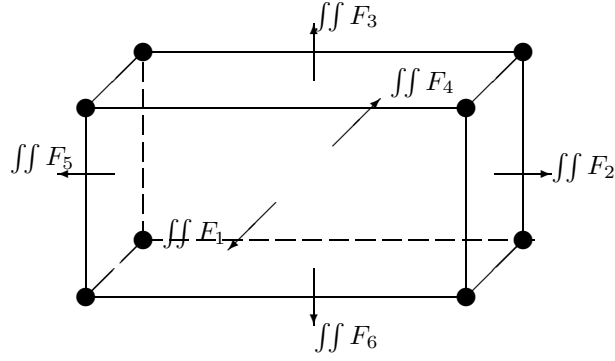
\subsection{Superconvergence  of the consistency error}
We also need the decomposition of the consistency error (\ref{eq11}). For ease of reading, we recall the expression as follows
\begin{eqnarray}
a_h(u,w_h)-(f,w_h)&=&a_h(u,w_h)-(f,I_h w_h)+(f,I_h w_h-w_h) \nonumber\\&=&-\sum_{K\in \mathcal{T}_h}\int_K\nabla\Delta u\cdot\nabla(w_h-I_h w_h)\;\mathrm{d}x_1\mathrm{d}x_2\mathrm{d}x_3\nonumber\\&&-\sum_{K\in \mathcal{T}_h}\int_K f(w_h-I_h w_h)\;\mathrm{d}x_1\mathrm{d}x_2\mathrm{d}x_3+\sum_{K\in \mathcal{T}_h}\sum_{i=1}^3\int_{\partial K}\frac{\partial^2 u}{\partial x_i^2}\frac{\partial w_h}{\partial x_i}n_i\;\mathrm{d}s\nonumber\\&&+\sum_{K\in \mathcal{T}_h}\sum_{1\leq i\neq j\leq3}\int_{\partial K}\frac{\partial^2 u}{\partial x_i\partial x_j}\frac{\partial w_h}{\partial x_j}n_i\;\mathrm{d}s.\label{eq54}
\end{eqnarray}
A direct application of the interpolation error estimate (\ref{eq57}) leads to
\begin{equation}
\bigg|\sum_{K\in\mathcal{T}_h}\int_K f(w_h-I_h w_h)\;\mathrm{d}x_1\mathrm{d}x_2\bigg|\leq Ch^2||f||_{L^2(\Omega)}|w_h|_h.\label{eq36}
\end{equation}
\begin{lem}\label{lem2}
Suppose that $u\in H_0^2(\Omega)\bigcap H^4(\Omega)$ and $w_h\in V_h$. Then,
\begin{equation*}
\sum_{K\in\mathcal{T}_h}\int_K\nabla\Delta u\cdot\nabla(w_h-I_h w_h)\;\mathrm{d}x_1\mathrm{d}x_2\mathrm{d}x_3\leq Ch^2|u|_{H^4(\Omega)}|w_h|_h.
\end{equation*}
\end{lem}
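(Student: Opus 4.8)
The plan is to mimic exactly the argument used in the two-dimensional case (Lemma \ref{lem1}), only now on the reference cube $\widehat{K}=(-1,1)^3$ rather than the reference square. First I would observe that, as in 2D, the piecewise bilinear (now trilinear) interpolation operator $I_h$ is well defined on $V_h$ because $V_h$-functions are continuous at vertices, and that $I_h$ satisfies the estimate $|v-I_hv|_{H^l(K)}\leq Ch^{2-l}|v|_{H^2(K)}$ for $l=0,1$. Splitting $\nabla\Delta u\cdot\nabla(w_h-I_hw_h)=\sum_{i=1}^3\frac{\partial\Delta u}{\partial x_i}\frac{\partial(w_h-I_hw_h)}{\partial x_i}$, it suffices to bound one representative term, say $i=1$, the other two being symmetric.

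For that term I would introduce, on the reference cube, the bilinear functional
\begin{equation*}
B_1(\widehat\phi,\widehat w_h)=\int_{\widehat K}\widehat\phi\,\frac{\partial(\widehat w_h-\widehat I_{\widehat K}\widehat w_h)}{\partial\xi_1}\;\mathrm{d}\xi_1\mathrm{d}\xi_2\mathrm{d}\xi_3,\qquad \widehat\phi\in H^1(\widehat K),\ \widehat w_h\in V_h.
\end{equation*}
The two properties I need are: (i) continuity, $B_1(\widehat\phi,\widehat w_h)\leq C\|\widehat\phi\|_{L^2(\widehat K)}|\widehat w_h|_{H^2(\widehat K)}$, which follows from Cauchy--Schwarz together with the reference-element interpolation bound for $\widehat I_{\widehat K}$; and (ii) the vanishing property $B_1(\widehat\phi,\widehat w_h)=0$ for all constant $\widehat\phi\in P_0(\widehat K)$ and all $\widehat w_h\in V_h$. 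Property (ii) is the one place where the three-dimensional shape function space $P(K)=P_2(K)+\mathrm{span}\{x_1^3,x_2^3,x_3^3,x_1x_2x_3\}$ must be checked directly: for a constant $\widehat\phi$, $B_1$ reduces to $\widehat\phi\int_{\widehat K}\frac{\partial}{\partial\xi_1}(\widehat w_h-\widehat I_{\widehat K}\widehat w_h)$, and since the integrand is a perfect $\xi_1$-derivative this equals $\widehat\phi$ times a difference of face integrals over $\widehat e_2$ and $\widehat e_4$ (the two faces $\xi_1=\pm1$) of $\widehat w_h-\widehat I_{\widehat K}\widehat w_h$; because $\widehat I_{\widehat K}$ is the trilinear interpolant agreeing with $\widehat w_h$ at all vertices, one verifies monomial by monomial (for $1,\xi_i,\xi_i\xi_j,\xi_i^2,\xi_i^3,\xi_1\xi_2\xi_3$) that these face integrals coincide, so the whole expression vanishes. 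Given (i) and (ii), the Bramble--Hilbert lemma yields
\begin{equation*}
B_1(\widehat\phi,\widehat w_h)\leq C\inf_{\widehat p\in P_0(\widehat K)}\|\widehat\phi+\widehat p\|_{L^2(\widehat K)}|\widehat w_h|_{H^2(\widehat K)}\leq C|\widehat\phi|_{H^1(\widehat K)}|\widehat w_h|_{H^2(\widehat K)}.
\end{equation*}

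Finally I would substitute $\widehat\phi$ corresponding to $\phi=\frac{\partial\Delta u}{\partial x_1}$ and apply a standard scaling argument between $K$ and $\widehat K$ in $\mathbb{R}^3$. Under the affine map \eqref{eq72} one picks up the appropriate powers of $h$: $|\widehat\phi|_{H^1(\widehat K)}$ scales to $h^{?}|\phi|_{H^1(K)}=h^{?}|u|_{H^4(K)}$ while $|\widehat w_h|_{H^2(\widehat K)}$ scales back to $|w_h|_{H^2(K)}$, and the volume Jacobian contributes the remaining factor, the net result being the bound $\int_K\frac{\partial\Delta u}{\partial x_1}\frac{\partial(w_h-I_hw_h)}{\partial x_1}\leq Ch^2|u|_{H^4(K)}|w_h|_h$ on each element. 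Summing over $K\in\mathcal{T}_h$ and using Cauchy--Schwarz over the elements (and $\sum_K|u|_{H^4(K)}^2=|u|_{H^4(\Omega)}^2$) gives the claimed global estimate. I expect the only genuine obstacle to be verifying the vanishing property (ii) for the enriched 3D space — in particular making sure the new basis direction $x_1x_2x_3$ behaves correctly — since bookkeeping the trilinear interpolant of that monomial and of the cubic bump functions is slightly more delicate than in 2D; everything else is a verbatim transcription of the 2D proof with $\mathrm{d}x_1\mathrm{d}x_2$ replaced by $\mathrm{d}x_1\mathrm{d}x_2\mathrm{d}x_3$ and the scaling exponents adjusted for dimension three.
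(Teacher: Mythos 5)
Your proposal is correct and follows essentially the same route as the paper: the same reference-element functional, its continuity and vanishing on $P_0(\widehat K)$ checked monomial by monomial against the trilinear interpolant, the Bramble--Hilbert lemma, and a scaling argument giving the factor $h^2$ per element (the half-powers of $h$ from $|\widehat\phi|_{H^1(\widehat K)}$ and $|\widehat w_h|_{H^2(\widehat K)}$ cancel in 3D exactly as needed). The one point you flagged as delicate, the monomial $\xi_1\xi_2\xi_3$, is in fact harmless since the trilinear interpolant reproduces it exactly, which is why the paper's table omits it.
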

\begin{proof}
On the reference element $\widehat{K}$, consider the following functional
\begin{equation}
B_3(\widehat{\phi},\widehat{w}_h)=\int_{\widehat{K}}\widehat{\phi}\frac{\partial (\widehat{w}_h-\widehat{I}_{\widehat{K}}\widehat{w}_h)}{\partial \xi_1}\;\mathrm{d}\xi_1\mathrm{d}\xi_2\mathrm{d}\xi_3,
\end{equation}
A simple calculation leads to the interpolations, see Table 2.
\begin{table*}[h]
 \centering
\caption{calculation of interpolation}
 \begin{tabular}{ccccccccccccccc}
 \hline
 $\widehat{w}_h$             &1          &$\xi_1$                  &$\xi_2$   &$\xi_3$     &$\xi_1\xi_2$  &$\xi_1\xi_3$&$\xi_2\xi_3$ &$\xi_1^2$&$\xi_2^2$&$\xi_3^2$&$\xi_1^3$&$\xi_2^3$&$\xi_3^3$&\\\hline
$ \widehat{I}_{\widehat{K}}\widehat{w}_h$   &1 &$\xi_1$ &$\xi_2$&$\xi_3$&$\xi_1\xi_2$&$\xi_1\xi_3$&$\xi_2\xi_3$&1&1&1&$\xi_1$&$\xi_2$&$\xi_3$&\\\hline
 \end{tabular}
\end{table*}
\\It follows that
\begin{eqnarray*}
&&\left\{
\begin{array}{lll}
B_3(\widehat{\phi},\widehat{w}_h)\leq c||\widehat{\phi}||_{L^2(\widehat{K})}|\widehat{w}_h|_{H^2(\widehat{K})},\\
B_3(\widehat{\phi},\widehat{w}_h)=0,\;\;\forall\widehat{\phi}\in P_0(\widehat{K}),\;\;\forall\widehat{w}_h\in V_h.
\end{array}
\right.\\
\end{eqnarray*}
The Bramble-Hilbert lemma gives
 \begin{equation}
 B_3(\widehat{\phi},\widehat{w}_h)\leq C\inf\limits_{\widehat{p}\in P_0(\widehat{K})}||\widehat{\phi}+\widehat{p}||_{L^2(\widehat{K})}|\widehat{w}_h|_{H^2(\widehat{K})}\leq C|\widehat{\phi}|_{H^1(\widehat{K})}|\widehat{w}_h|_{H^2(\widehat{K})}. \label{eq58}
 \end{equation}
 A substitution of $\phi=\frac{\partial \Delta u}{\partial x_1}$ into (\ref{eq58}), plus a scaling argument yield
 \begin{equation}
 \int_K\frac{\partial \Delta u}{\partial x_1}\frac{\partial (w_h-I_h w_h)}{\partial x_1}\;\mathrm{d}x_1\mathrm{d}x_2\mathrm{d}x_3\leq Ch^2|u|_{H^4(K)}|w_h|_h,\;\text{for any}\;K\in\mathcal{T}_h.
 \end{equation}
 A similar argument proves
 \begin{equation}
 \int_K\frac{\partial \Delta u}{\partial x_2}\frac{\partial (w_h-I_h w_h)}{\partial x_2}\;\mathrm{d}x_1\mathrm{d}x_2\mathrm{d}x_3\leq Ch^2|u|_{H^4(K)}|w_h|_h,
 \end{equation}
 and
 \begin{equation}
 \int_K\frac{\partial \Delta u}{\partial x_3}\frac{\partial (w_h-I_h w_h)}{\partial x_3}\;\mathrm{d}x_1\mathrm{d}x_2\mathrm{d}x_3\leq Ch^2|u|_{H^4(K)}|w_h|_h,
 \end{equation}
 which complete the proof.
\end{proof}

Next, we will analyze the last two terms of (\ref{eq54}).
\begin{lem}\label{lem3}
Suppose that $u\in H_0^2(\Omega)\bigcap H^4(\Omega)$ and $w_h\in V_h$. Then,
\begin{equation*}
\sum_{K\in \mathcal{T}_h}\sum_{i=1}^3\int_{\partial K}\frac{\partial^2 u}{\partial x_i^2}\frac{\partial w_h}{\partial x_i}n_i\;\mathrm{d}s\nonumber\leq Ch^2|u|_{H^4(\Omega)}|w_h|_h.
\end{equation*}
\end{lem}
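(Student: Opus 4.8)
The plan is to mimic the proof of Lemma \ref{lem6} face by face. Fix $K\in\mathcal{T}_h$. For the summand with index $i$, only the two faces of $K$ perpendicular to the $x_i$-axis, i.e.\ with outward unit normal $\pm e_i$, contribute to $\int_{\partial K}\frac{\partial^2 u}{\partial x_i^2}\frac{\partial w_h}{\partial x_i}n_i\,\mathrm{d}s$, and on those faces $\frac{\partial w_h}{\partial x_i}n_i=\frac{\partial w_h}{\partial n_F}$. Writing $\Pi^0_Fw=\frac{1}{|F|}\int_Fw\,\mathrm{d}s$ and $\mathcal{R}^0_Fw=w-\Pi^0_Fw$, I would first record, exactly as in Lemma \ref{lem6}, that since $\int_F\frac{\partial w_h}{\partial n_F}\,\mathrm{d}s$ is single-valued across internal faces and vanishes on boundary faces, $\sum_{K\in\mathcal{T}_h}\sum_{i=1}^3\int_{\partial K}\frac{\partial^2 u}{\partial x_i^2}\,\Pi^0_F\frac{\partial w_h}{\partial x_i}\,n_i\,\mathrm{d}s=0$; hence $\frac{\partial w_h}{\partial x_i}$ may be replaced by $\mathcal{R}^0_F\frac{\partial w_h}{\partial x_i}$ on every face.

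The decisive step is the three-dimensional analogue of the \emph{crucial property} of Lemma \ref{lem6}: for a pair of opposite faces $F,F'$ of $K$ with normals $\pm e_i$, identified via orthogonal projection onto the plane $\{x_i=\text{const}\}$,
\[
\mathcal{R}^0_F\frac{\partial w_h}{\partial x_i}\Big|_F=\mathcal{R}^0_{F'}\frac{\partial w_h}{\partial x_i}\Big|_{F'},\qquad\forall\,w_h\in V_h .
\]
This is read off the shape function space $P(K)=P_2(K)+\text{span}\{x_1^3,x_2^3,x_3^3,x_1x_2x_3\}$: for $i=1$, $\frac{\partial w_h}{\partial x_1}\in\text{span}\{1,x_1,x_2,x_3,x_1^2,x_2x_3\}$, so on a face with $x_1$ frozen the monomials $1,x_1,x_1^2$ become constants while the remaining part equals the same function $\alpha x_2+\beta x_3+\gamma\,x_2x_3$ on both faces ($\alpha,\beta,\gamma$ being the coefficients of $x_1x_2,x_1x_3,x_1x_2x_3$ in $w_h$, independent of the face); subtracting the face-mean removes precisely the constant, leaving the identical zero-mean polynomial on $F$ and $F'$. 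The only monomial absent in two dimensions is the internal bubble $x_1x_2x_3$, and since $\partial_{x_1}(x_1x_2x_3)=x_2x_3$ restricts to the same in-face function $x_2x_3$ on $F$ and on $F'$, the property survives; this is the point specific to the cubic Morley element.

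With the crucial property in hand, for each $i$ the two paired faces combine, with opposite signs and exactly as in Lemma \ref{lem6}, into $\int_{\widehat F}\big(\frac{\partial^2 u}{\partial x_i^2}\big|_F-\frac{\partial^2 u}{\partial x_i^2}\big|_{F'}\big)\,\mathcal{R}^0_F\frac{\partial w_h}{\partial x_i}\big|_F\,\mathrm{d}x_j\mathrm{d}x_k$ over the common projection $\widehat F$, and since $\mathcal{R}^0_F\frac{\partial w_h}{\partial x_i}\big|_F$ has zero mean on $F$ I may insert a second $\mathcal{R}^0_F$ in front of the trace difference. Cauchy--Schwarz then leaves the product of $\big\|\mathcal{R}^0_F\big(\frac{\partial^2 u}{\partial x_i^2}\big|_F-\frac{\partial^2 u}{\partial x_i^2}\big|_{F'}\big)\big\|_{L^2(F)}$ and $\big\|\mathcal{R}^0_F\frac{\partial w_h}{\partial x_i}\big|_F\big\|_{L^2(F)}$, both of which I would estimate by transporting to the reference cube $\widehat K$. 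The operator $\widehat u\mapsto\mathcal{R}^0_{\widehat F}\big(\frac{\partial^2\widehat u}{\partial\xi_i^2}\big|_{\widehat F}-\frac{\partial^2\widehat u}{\partial\xi_i^2}\big|_{\widehat F'}\big)$ is bounded on $H^4(\widehat K)$ (trace theorem) and annihilates $P_3(\widehat K)$ (for $\widehat u\in P_3$ the second derivative is affine, so its trace difference on opposite faces is constant and is killed by $\mathcal{R}^0_{\widehat F}$), whence Bramble--Hilbert gives the bound $C|\widehat u|_{H^4(\widehat K)}$; the $w_h$-factor is bounded by $C|\widehat w_h|_{H^2(\widehat K)}$ since on the finite-dimensional shape space $P(\widehat K)$ it is a seminorm vanishing on $P_1(\widehat K)$. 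A scaling argument --- the face pairing $\int_F\frac{\partial^2 u}{\partial x_i^2}\frac{\partial w_h}{\partial x_i}\,\mathrm{d}s$ equals $h^{-1}$ times its reference counterpart, while $|\widehat u|_{H^4(\widehat K)}=h^{5/2}|u|_{H^4(K)}$ and $|\widehat w_h|_{H^2(\widehat K)}=h^{1/2}\|\nabla^2 w_h\|_{L^2(K)}$ --- then yields, for each $i$, the per-element bound $Ch^2|u|_{H^4(K)}\|\nabla^2 w_h\|_{L^2(K)}$. Summing over $K\in\mathcal{T}_h$ and over $i=1,2,3$ and applying a discrete Cauchy--Schwarz inequality, together with $\sum_K|u|_{H^4(K)}^2=|u|_{H^4(\Omega)}^2$ and $\sum_K\|\nabla^2 w_h\|_{L^2(K)}^2=|w_h|_h^2$, completes the proof.

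The step I expect to be the main obstacle is the verification of the three-dimensional crucial property, i.e.\ making it precise that removing the face-mean of $\frac{\partial w_h}{\partial x_i}$ yields a quantity genuinely independent of which of the two opposite faces one restricts to; this is the only place where the specific structure of $P_2(K)+\text{span}\{x_1^3,x_2^3,x_3^3,x_1x_2x_3\}$ enters, and in particular where one must confirm that the extra internal bubble $x_1x_2x_3$ does not destroy the cancellation that made Lemma \ref{lem6} work. Everything afterwards is a routine, if more bookkeeping-heavy, transcription of that lemma from one-dimensional edges to two-dimensional faces, with the trace and Poincar\'e estimates on $e_i$ replaced by their counterparts on $F$.
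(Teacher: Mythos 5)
Your proposal is correct and follows essentially the same route as the paper: replace $\partial w_h/\partial x_i$ by $\mathcal{R}^0_F\partial w_h/\partial x_i$ using the single-valuedness (resp.\ vanishing) of the face-averaged normal derivatives, invoke the crucial identity $\mathcal{R}^0_F\partial_{x_i}w_h|_F=\mathcal{R}^0_{F'}\partial_{x_i}w_h|_{F'}$ on opposite faces to pair them into a trace difference of $\partial^2u/\partial x_i^2$, insert a second $\mathcal{R}^0_F$, and conclude by Cauchy--Schwarz with an $O(h^2)$ scaling estimate. You in fact supply details the paper leaves implicit (the monomial-by-monomial verification of the crucial property including the $x_1x_2x_3$ bubble, and the Bramble--Hilbert/scaling bookkeeping), and these check out.
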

\begin{proof}
Given $K\in \mathcal{T}_h$, let $F_i,i=1,\cdots,6$ be its faces. Define $\Pi^0_{F_i}w=\frac{1}{|F_i|}\int_{F_i}w\mathrm{d}s$ and $\mathcal{R}^0_{F_i}w=w-\Pi^0_{F_i}w$, for any $w\in L^2(K)$, then we have
\begin{equation}
\int_{F_i}\mathcal{R}^0_{F_i}w\;\mathrm{d}s=0.\label{37}
\end{equation}
Since $\int_{F_i}\frac{\partial w_h}{\partial n}\mathrm{d}s$ is continuous on internal faces $F_i$ and vanishes on boundary faces of $\mathcal{T}_h$, thus
\begin{eqnarray*}
\sum_{K\in \mathcal{T}_h}\sum_{i=1}^3\int_{\partial K}\frac{\partial^2 u}{\partial x_i^2}\frac{\partial w_h}{\partial x_i}n_i\;\mathrm{d}s\nonumber&=&\sum_{K\in \mathcal{T}_h}\sum_{i=1}^3\sum_{j=1}^6\int_{F_j}\frac{\partial^2 u}{\partial x_i^2}\frac{\partial w_h}{\partial x_i}n_i\;\mathrm{d}s\\&=&\sum_{K\in \mathcal{T}_h}\sum_{i=1}^3\sum_{j=1}^6\int_{F_j}\frac{\partial^2 u}{\partial x_i^2}\mathcal{R}^0_{F_j}\frac{\partial w_h}{\partial x_i}n_i\;\mathrm{d}s
\end{eqnarray*}
For ease of expression, denote $L_j n_i=\int_{F_j}\frac{\partial^2 u}{\partial x_i^2}\mathcal{R}^0_{F_j}\frac{\partial w_h}{\partial x_i}n_i\;\mathrm{d}s,\;i=1,2,3,\;j=1,\cdots,6$. \\ Then we firstly analyze the following terms
\begin{eqnarray*}
&&\sum_{K\in\mathcal{T}_h}\sum_{i=1}^3(L_2+L_5)n_i\\&&=\sum_{K\in\mathcal{T}_h}\int_{F_2}\frac{\partial^2 u}{\partial x_2^2}\mathcal{R}^0_{F_2}\frac{\partial w_h}{\partial x_2}\;\mathrm{d}x_1\mathrm{d}x_3-\sum_{K\in\mathcal{T}_h}\int_{F_5}\frac{\partial^2 u}{\partial x_2^2}\mathcal{R}^0_{F_5}\frac{\partial w_h}{\partial x_2}\;\mathrm{d}x_1\mathrm{d}x_3\\&&=\sum_{K\in\mathcal{T}_h}\int_{x_{1,c}-h}^{x_{1,c}+h}\int_{x_{3,c}-h}^{x_{3,c}+h}\bigg(\frac{\partial^2 u}{\partial x_2^2}\bigg|_{F_2}\mathcal{R}^0_{F_2}\frac{\partial w_h}{\partial x_2}\bigg|_{F_2}-\frac{\partial^2 u}{\partial x_2^2}\bigg|_{F_5}\mathcal{R}^0_{F_5}\frac{\partial w_h}{\partial x_2}\bigg|_{F_5}\bigg)\;\mathrm{d}x_1\mathrm{d}x_3.
\end{eqnarray*}
For the cubic Morley element, we have the following crucial property
\begin{equation*}
\mathcal{R}^0_{F_1}\frac{\partial w_h}{\partial x_1}\Big|_{F_1}=\mathcal{R}^0_{F_4}\frac{\partial w_h}{\partial x_1}\Big|_{F_4},\;\mathcal{R}^0_{F_2}\frac{\partial w_h}{\partial x_2}\Big|_{F_2}=\mathcal{R}^0_{F_5}\frac{\partial w_h}{\partial x_2}\Big|_{F_5},\;\mathcal{R}^0_{F_3}\frac{\partial w_h}{\partial x_3}\Big|_{F_3}=\mathcal{R}^0_{F_6}\frac{\partial w_h}{\partial x_3}\Big|_{F_6}.
\end{equation*}
This implies
\begin{eqnarray*}
\sum_{K\in\mathcal{T}_h}\sum_{i=1}^3(L_2+L_5)n_i&=&\sum_{K\in\mathcal{T}_h}\int_{x_{1,c}-h}^{x_{1,c}+h}\int_{x_{3,c}-h}^{x_{3,c}+h}\bigg(\frac{\partial^2 u}{\partial x_2^2}\bigg|_{F_2}-\frac{\partial^2 u}{\partial x_2^2}\bigg|_{F_5}\bigg)\mathcal{R}^0_{F_2}\frac{\partial w_h}{\partial x_2}\Big|_{F_2}\mathrm{d}x_1\mathrm{d}x_3\\&=&\sum_{K\in\mathcal{T}_h}\int_{x_{1,c}-h}^{x_{1,c}+h}\int_{x_{3,c}-h}^{x_{3,c}+h}\mathcal{R}^0_{F_2}\bigg(\frac{\partial^2 u}{\partial x_2^2}\bigg|_{F_2}-\frac{\partial^2 u}{\partial x_2^2}\bigg|_{F_5}\bigg)\mathcal{R}^0_{F_2}\frac{\partial w_h}{\partial x_2}\Big|_{F_2}\mathrm{d}x_1\mathrm{d}x_3.
\end{eqnarray*}
The error estimate of the interpolation operators $\Pi^0_{F_2}$ yields
\begin{equation}
\begin{split} 
\sum_{K\in\mathcal{T}_h}\sum_{i=1}^3(L_2+L_5)n_i&\leq Ch\sum_{K\in\mathcal{T}_h}\Big|\Big|\nabla\Big(\frac{\partial^2 u}{\partial x_2^2}\Big|_{F_2}-\frac{\partial^2 u}{\partial x_2^2}\Big|_{F_5}\Big)\Big|\Big|_{L^2(K)}|w_h|_h\\&\leq Ch^2|u|_{H^4(\Omega)}|w_h|_h.\label{eq44}
\end{split}
\end{equation}
A similar argument proves
\begin{equation}
\sum_{K\in\mathcal{T}_h}\sum_{i=1}^3(L_1+L_4)n_i\leq Ch^2|u|_{H^4(\Omega)}|w_h|_h, \label{eq46}
\end{equation}
and
\begin{equation}
\sum_{K\in\mathcal{T}_h}\sum_{i=1}^3(L_3+L_6)n_i\leq Ch^2|u|_{H^4(\Omega)}|w_h|_h. \label{eq47}
\end{equation}
Then, a combination of (\ref{eq44}), (\ref{eq46}) and (\ref{eq47}) completes the proof.
\end{proof}
\begin{lem}\label{lem4}
Suppose that $u\in H_0^2(\Omega)\bigcap H^4(\Omega)$ and $w_h\in V_h$. Then,
\begin{equation}
\sum_{K\in \mathcal{T}_h}\sum_{1\leq i\neq j\leq3}\int_{\partial K}\frac{\partial^2 u}{\partial x_i\partial x_j}\frac{\partial w_h}{\partial x_j}n_i\;\mathrm{d}s\leq Ch^2|u|_{H^4(\Omega)}|w_h|_h.
\end{equation}
\end{lem}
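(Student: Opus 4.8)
The statement to prove is Lemma~\ref{lem4}, the three-dimensional analogue of Lemma~\ref{lem7} (which itself parallels Lemma~\ref{lem6}). So the plan is to mimic the structure of the proof of Lemma~\ref{lem6}/Lemma~\ref{lem7}, now working with faces $F_j$ of a cube rather than edges of a rectangle. The key ingredients available are: the definition $\mathcal{R}^0_{F_j}w = w - \Pi^0_{F_j}w$ with $\int_{F_j}\mathcal{R}^0_{F_j}w\,\mathrm{d}s = 0$; the continuity of $\int_{F_j}\frac{\partial w_h}{\partial n}\,\mathrm{d}s$ across internal faces and its vanishing on boundary faces; and, crucially, the opposite-face identities for the cubic Morley element, $\mathcal{R}^0_{F_1}\frac{\partial w_h}{\partial x_1}|_{F_1} = \mathcal{R}^0_{F_4}\frac{\partial w_h}{\partial x_1}|_{F_4}$ and the analogous ones for the $x_2,x_3$ directions (stated in the proof of Lemma~\ref{lem3}).

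First I would expand the boundary integral over $\partial K$ into a sum over the six faces $F_1,\dots,F_6$ and, for each mixed pair $(i,j)$ with $i\neq j$, observe that on a face $F_j$ whose outward normal lies in the $x_\ell$ direction, only the term with $n_i \neq 0$ survives, i.e. $i = \ell$. Since $\int_{F_j}\frac{\partial w_h}{\partial x_j}n_i\,\mathrm{d}s$ has a continuity/vanishing property analogous to the normal-derivative case, I would replace $\frac{\partial w_h}{\partial x_j}$ by $\mathcal{R}^0_{F_j}\frac{\partial w_h}{\partial x_j}$ at the cost of nothing; more precisely, one uses that $\int_{F_j}\frac{\partial^2 u}{\partial x_i \partial x_j}\Pi^0_{F_j}\frac{\partial w_h}{\partial x_j}n_i\,\mathrm{d}s$ telescopes away across the mesh because $\Pi^0_{F_j}\frac{\partial w_h}{\partial x_j}$ is a constant times $\int_{F_j}\frac{\partial w_h}{\partial x_j}$, but here one should be slightly careful: unlike the pure normal term, $\frac{\partial w_h}{\partial x_j}$ with $j\neq\ell$ is a tangential derivative, so the matching must be argued via the vertex continuity and the tangential structure exactly as in Lemma~\ref{lem7}. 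In any case, after this reduction the sum becomes $\sum_K \sum_{i\neq j}\pm\int_{F_j}\frac{\partial^2 u}{\partial x_i\partial x_j}\mathcal{R}^0_{F_j}\frac{\partial w_h}{\partial x_j}\,\mathrm{d}s$ over the relevant faces.

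Next I would pair opposite faces. For a fixed direction, say the pair $F_2$ (at $x_2 = x_{2,c}+h$, outward normal $+e_2$) and $F_5$ (at $x_2 = x_{2,c}-h$, outward normal $-e_2$), group the two contributions and use the crucial identity $\mathcal{R}^0_{F_2}\frac{\partial w_h}{\partial x_2}|_{F_2} = \mathcal{R}^0_{F_5}\frac{\partial w_h}{\partial x_2}|_{F_5}$ to factor it out, leaving a difference $\big(\frac{\partial^2 u}{\partial x_i\partial x_2}|_{F_2} - \frac{\partial^2 u}{\partial x_i\partial x_2}|_{F_5}\big)$ hit by $\mathcal{R}^0_{F_2}$ times $\mathcal{R}^0_{F_2}\frac{\partial w_h}{\partial x_2}|_{F_2}$. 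Applying the $O(h)$ approximation estimate for $\Pi^0_{F_2}$ to the $\mathcal{R}^0_{F_2}$ of the difference, and bounding the difference of the two traces by $h\|\nabla(\cdot)\|$ (a finite-difference-in-$x_2$ bound giving another factor of $h$ and picking up an $H^4$ seminorm of $u$), and controlling $\|\mathcal{R}^0_{F_2}\frac{\partial w_h}{\partial x_2}\|$ by $|w_h|_h$ via a trace/scaling argument, yields the $O(h^2)$ bound for this pair. Summing over the three opposite-face pairs and the two surviving $i$-indices per pair completes the estimate. The main obstacle — and the one place I would be most careful — is the bookkeeping of which mixed index $(i,j)$ pairs survive on which face and with what sign, and verifying that the tangential-derivative factor $\frac{\partial w_h}{\partial x_j}$ (with $j$ not the face normal direction) still has the continuity needed to pass from $\frac{\partial w_h}{\partial x_j}$ to $\mathcal{R}^0_{F_j}\frac{\partial w_h}{\partial x_j}$; this is precisely the subtlety handled in Lemma~\ref{lem7} in 2D, and I would invoke that same reasoning. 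Since the referee-facing content is entirely parallel to Lemma~\ref{lem7}, I expect the paper to simply state that ``the proof follows the same procedure as Lemma~\ref{lem3} and Lemma~\ref{lem7}'' and omit the details.
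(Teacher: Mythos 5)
The paper itself gives no argument here: it states only that the proof ``can follow the similar procedure of Lemma \ref{lem3}'' and omits the details, so there is nothing concrete to compare your write-up against. Your overall strategy (split $\partial K$ into the six faces, insert $\mathcal{R}^0_{F}$, pair opposite faces, gain one power of $h$ from $\mathcal{R}^0_{F}$ applied to the $u$-factor and another from the difference of the traces of $\partial^2 u$ on opposite faces) is indeed the intended one. However, as written your argument has a genuine gap at the pairing step. On the faces $F_2,F_5$ (normal direction $x_2$) the integrand of Lemma \ref{lem4} involves the \emph{tangential} derivatives $\partial w_h/\partial x_1$ and $\partial w_h/\partial x_3$, not $\partial w_h/\partial x_2$; the identity you invoke, $\mathcal{R}^0_{F_2}\frac{\partial w_h}{\partial x_2}\big|_{F_2}=\mathcal{R}^0_{F_5}\frac{\partial w_h}{\partial x_2}\big|_{F_5}$, is the normal-derivative identity that belongs to Lemma \ref{lem3}. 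The tangential analogue you actually need, e.g.\ $\mathcal{R}^0_{F_2}\frac{\partial w_h}{\partial x_1}\big|_{F_2}=\mathcal{R}^0_{F_5}\frac{\partial w_h}{\partial x_1}\big|_{F_5}$, is \emph{false} in general for the cubic Morley element: the shape function $x_1x_2x_3$ contributes $c\,x_1x_3$ to $\partial w_h/\partial x_1$, so the fluctuating parts on the two opposite faces differ by $2ch\,(x_3-x_{3,c})$ with $c=\partial^3 w_h/\partial x_1\partial x_2\partial x_3$. (In 2D this issue does not arise because $P_2(K)+\mathrm{span}\{x_1^3,x_2^3\}$ has no such mixed cubic, which is why Lemma \ref{lem7} really is a verbatim copy of Lemma \ref{lem6}; your appeal to ``the same reasoning as Lemma \ref{lem7}'' therefore does not transfer.) Consequently your pairing leaves an extra term proportional to $h\,c\int_{F}\partial^2_{x_ix_j}u\,(x_k-x_{k,c})\,\mathrm{d}s$, and a crude bound of it (inverse estimate for $c$, trace estimates for $u$) only gives $O(h)$, so either a further cancellation or a sharper treatment of this residual must be supplied; your proposal does not address it.

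A second, smaller point: to discard the face means you assert that $\int_{F}\frac{\partial w_h}{\partial x_j}\,\mathrm{d}s$ ($x_j$ tangential to $F$) ``telescopes away'' by continuity, but this needs an argument, since the traces of $w_h$ from the two elements sharing $F$ agree only at the four vertices. The statement is in fact true and worth writing out: on a face the trace lies in $\mathrm{span}\{1,\xi_2,\xi_3,\xi_2\xi_3,\xi_2^2,\xi_3^2,\xi_2^3,\xi_3^3\}$, and every element of this space vanishing at the four corners (a span of $\xi_2^2-1,\ \xi_3^2-1,\ \xi_2^3-\xi_2,\ \xi_3^3-\xi_3$) has zero face-mean tangential derivative, so $\Pi^0_{F}\frac{\partial w_h}{\partial x_j}$ is determined by the vertex values of $w_h$ and hence is single-valued on interior faces and zero on boundary faces. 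With that lemma in hand the constant part cancels; but the failure of the opposite-face identity for tangential derivatives remains the substantive gap you would have to close.
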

The proof of this lemma can follow the similar procedure of Lemma \ref{lem3}, and herein we omit it. According to above lemmas, we can obtain the following error estimate.
\begin{thm}\label{thm8}
Suppose that $u\in H^4(\Omega)$, $f\in L^2(\Omega)$. Then it holds that
\begin{equation*}
a_h(u,w_h)-(f,w_h)\leq Ch^2(||f||_{L^2(\Omega)}+|u|_{H^4(\Omega)})|w_h|_h,\;\;\text{for any}\;w_h\in V_h.
\end{equation*}
\end{thm}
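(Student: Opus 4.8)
The plan is to assemble Theorem~\ref{thm8} from the decomposition~\eqref{eq54} of the consistency error into four pieces, exactly as Theorem~\ref{thm1} was assembled from~\eqref{eq11} in the two-dimensional case. First I would write
\[
a_h(u,w_h)-(f,w_h)=-\sum_{K}\int_K\nabla\Delta u\cdot\nabla(w_h-I_hw_h)\,\mathrm{d}x
-\sum_{K}\int_K f(w_h-I_hw_h)\,\mathrm{d}x+\mathrm{(I)}+\mathrm{(II)},
\]
where $\mathrm{(I)}=\sum_{K}\sum_{i=1}^3\int_{\partial K}\frac{\partial^2 u}{\partial x_i^2}\frac{\partial w_h}{\partial x_i}n_i\,\mathrm{d}s$ and $\mathrm{(II)}=\sum_{K}\sum_{1\le i\neq j\le 3}\int_{\partial K}\frac{\partial^2 u}{\partial x_i\partial x_j}\frac{\partial w_h}{\partial x_j}n_i\,\mathrm{d}s$. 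Then the four terms are bounded one at a time: the second term is controlled by $Ch^2\|f\|_{L^2(\Omega)}|w_h|_h$ via the bilinear interpolation estimate~\eqref{eq57}, as recorded in~\eqref{eq36}; the first term is exactly Lemma~\ref{lem2}; the term $\mathrm{(I)}$ is exactly Lemma~\ref{lem3}; and $\mathrm{(II)}$ is Lemma~\ref{lem4}. Summing the four bounds and using the triangle inequality yields
\[
a_h(u,w_h)-(f,w_h)\le Ch^2\big(\|f\|_{L^2(\Omega)}+|u|_{H^4(\Omega)}\big)|w_h|_h,
\]
which is the assertion.

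Since Lemmas~\ref{lem2}, \ref{lem3}, \ref{lem4} are all proved earlier in the excerpt, the proof of Theorem~\ref{thm8} itself is a one-line combination and there is no genuine obstacle left at this stage. Had these lemmas not been available, the real difficulty would have sat in the boundary terms $\mathrm{(I)}$ and $\mathrm{(II)}$: there one must exploit two structural facts about the cubic Morley space, namely that $\int_{F}\frac{\partial w_h}{\partial n}\,\mathrm{d}s$ is single-valued across internal faces (and vanishes on boundary faces), so that inserting the face-mean-zero projector $\mathcal R^0_{F}$ is legitimate, and the crucial identity $\mathcal R^0_{F_i}\frac{\partial w_h}{\partial x_k}\big|_{F_i}=\mathcal R^0_{F_j}\frac{\partial w_h}{\partial x_k}\big|_{F_j}$ for the pair $(F_i,F_j)$ of opposite faces normal to $x_k$. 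Combined with Taylor expansion of $\frac{\partial^2 u}{\partial x_i\partial x_j}$ between opposite faces and the $O(h)$ estimate for $\mathcal R^0_{F}$, this is what produces the extra power of $h$ and hence the $O(h^2)$ rate; but in the present write-up that work is delegated to the cited lemmas.

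The only points worth checking for rigor are bookkeeping: that the integration-by-parts identity producing~\eqref{eq54} is valid for $u\in H^2_0(\Omega)\cap H^4(\Omega)$ (so traces of $\nabla^2 u$ on faces make sense and $I_hw_h\in H^1_0(\Omega)$ so the Green formula has no boundary contribution), and that the hypotheses of the three lemmas, which require $u\in H^2_0(\Omega)\cap H^4(\Omega)$, are met — here the theorem's stated hypothesis $u\in H^4(\Omega)$ is understood together with the fact that $u$ solves~\eqref{eq1}, hence $u\in V=H^2_0(\Omega)$ automatically. With those remarks in place the statement follows immediately, and I would present the proof as a short paragraph citing~\eqref{eq36}, Lemma~\ref{lem2}, Lemma~\ref{lem3}, Lemma~\ref{lem4} and adding the four right-hand sides.
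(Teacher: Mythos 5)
Your proposal is correct and follows exactly the paper's route: the paper also obtains Theorem~\ref{thm8} by combining the decomposition (\ref{eq54}) with the bound (\ref{eq36}) and Lemmas~\ref{lem2}, \ref{lem3}, \ref{lem4}, the theorem itself being stated there as an immediate consequence of those lemmas. Your additional remarks on the validity of the integration by parts and on $u\in H^2_0(\Omega)$ are sensible bookkeeping but do not change the argument.
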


 We need the following asymptotic expansion result of the  canonical interpolation  from \cite{HuYang}.
\begin{lem}\label{lem11}
Suppose that $u\in H^4(\Omega)$, $\Omega\subset \mathbb{R}^3$, then for all $v_h\in V_h$, we have
\begin{eqnarray*}
a_h(u-\Pi_h u,v_h)&\leq& \sum_{K\in\mathcal{T}_h}\sum_{i\neq j=1}^3 \frac{h^2}{3}\int_K\frac{\partial^3u}{\partial x_i\partial x_j^2}\frac{\partial^3v_h}{\partial x_i^3}\,\mathrm{d}x_1\mathrm{d}x_2\mathrm{d}x_3+Ch^2|u|_{H^4(\Omega)}|v_h|_h.
\end{eqnarray*}
\end{lem}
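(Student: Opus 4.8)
The plan is to prove the sharper statement that $a_h(u-\Pi_h u,v_h)$ equals the displayed sum of leading terms \emph{plus} a term bounded by $Ch^2|u|_{H^4(\Omega)}|v_h|_h$, from which the inequality follows. Since $\Pi_h$ is defined element by element, $a_h(u-\Pi_h u,v_h)=\sum_{K\in\mathcal{T}_h}a_K(u-\Pi_K u,v_h)$ with $a_K(\cdot,\cdot)=(\nabla^2\cdot,\nabla^2\cdot)_{L^2(K)}$, and on each $K$ I would pull everything back to the reference cube $\widehat K=[-1,1]^3$ via (\ref{eq72}). Because $\Pi_K$ commutes with the affine map, a count of powers of $h$ shows that $a_K(u-\Pi_K u,v_h)$, every leading term $\frac{h^2}{3}\int_K\frac{\partial^3u}{\partial x_i\partial x_j^2}\frac{\partial^3v_h}{\partial x_i^3}\,\mathrm{d}x$, and the remainder $h^2|u|_{H^4(K)}|v_h|_{H^2(K)}$ all carry the same factor $h^{-1}$ relative to the corresponding quantity on $\widehat K$; hence it suffices to prove, for all $\widehat u\in H^4(\widehat K)$ and all $\widehat v_h\in P(\widehat K)$,
\begin{equation*}
a_{\widehat K}(\widehat u-\widehat\Pi\widehat u,\widehat v_h)=\sum_{i\neq j=1}^{3}\frac13\int_{\widehat K}\frac{\partial^3\widehat u}{\partial\xi_i\partial\xi_j^2}\frac{\partial^3\widehat v_h}{\partial\xi_i^3}\,\mathrm{d}\xi+G(\widehat u,\widehat v_h),\qquad |G(\widehat u,\widehat v_h)|\le C|\widehat u|_{H^4(\widehat K)}|\widehat v_h|_{H^2(\widehat K)},
\end{equation*}
where $\widehat\Pi$ is the canonical interpolation on $\widehat K$ and $G$ is \emph{defined} by the first equality.

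To get the remainder bound I would apply the Bramble--Hilbert lemma to the bilinear functional $G(\cdot,\widehat v_h)$. It is continuous, $|G(\widehat u,\widehat v_h)|\le C\|\widehat u\|_{H^3(\widehat K)}|\widehat v_h|_{H^2(\widehat K)}$: $\widehat\Pi$ is bounded on $H^3(\widehat K)$ (point values make sense since $H^3(\widehat K)\hookrightarrow C^0$, face averages of normal derivatives since gradient traces lie in $L^2$), $a_{\widehat K}$ is bounded by the $H^2$-seminorms, and on the fixed finite-dimensional space $P(\widehat K)$ the $H^3$-seminorm is controlled by the $H^2$-seminorm modulo $P_1$. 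It remains to check $G(\widehat u,\widehat v_h)=0$ for every $\widehat u\in P_3(\widehat K)$. Since $\widehat\Pi$ reproduces $P_2(\widehat K)$, both sides vanish on $P_2(\widehat K)$, so only the ten cubic monomials matter. For $\widehat u\in\{\xi_1^3,\xi_2^3,\xi_3^3,\xi_1\xi_2\xi_3\}\subset P(\widehat K)$ one has $\widehat u-\widehat\Pi\widehat u=0$ and also $\partial^3_{\xi_i\xi_j^2}\widehat u=0$ for $i\neq j$, so $G$ vanishes; the only nontrivial case is $\widehat u=\xi_i\xi_j^2$ with $i\neq j$.

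By symmetry it is enough to treat $\widehat u=\xi_1\xi_2^2$. Parity in each variable (together with invariance of the degrees of freedom) forces $\widehat\Pi(\xi_1\xi_2^2)=a\xi_1+b\xi_1^3$; matching the vertex values gives $a+b=1$ and matching the average of $\partial_{\xi_1}$ over the face $\{\xi_1=1\}$ gives $a+3b=\tfrac13$, whence $\widehat\Pi(\xi_1\xi_2^2)=\tfrac43\xi_1-\tfrac13\xi_1^3$ — in particular the extra shape function $\xi_1\xi_2\xi_3$ enters with coefficient zero. Thus $\widehat u-\widehat\Pi\widehat u=\xi_1\xi_2^2-\tfrac43\xi_1+\tfrac13\xi_1^3$, whose Hessian has entries $\partial^2_{11}=\partial^2_{22}=2\xi_1$, $\partial^2_{12}=2\xi_2$, and all others zero. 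Plugging into $a_{\widehat K}$, and using that any $\widehat v_h\in P(\widehat K)$ satisfies $\partial^2_{ii}\widehat v_h=(\mathrm{const})+\partial^3_{iii}\widehat v_h\,\xi_i$ and $\partial^2_{12}\widehat v_h=(\mathrm{const})+c\,\xi_3$, every contribution integrates to zero over $\widehat K$ except $\int_{\widehat K}2\xi_1\,\partial^2_{11}\widehat v_h\,\mathrm{d}\xi=2\,\partial^3_{111}\widehat v_h\int_{\widehat K}\xi_1^2\,\mathrm{d}\xi=\tfrac{16}{3}\partial^3_{111}\widehat v_h$. On the other hand $\partial^3_{\xi_1\xi_2^2}(\xi_1\xi_2^2)=2$ while all other $\partial^3_{\xi_i\xi_j^2}$ of this monomial vanish, so the leading sum equals $\tfrac13\cdot 2\cdot\partial^3_{111}\widehat v_h\cdot|\widehat K|=\tfrac{16}{3}\partial^3_{111}\widehat v_h$; the two agree, hence $G(\xi_1\xi_2^2,\widehat v_h)=0$. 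Finally I would transform back to $K$, sum over $\mathcal{T}_h$, and estimate $\sum_K|G_K|\le Ch^2\big(\sum_K|u|_{H^4(K)}^2\big)^{1/2}\big(\sum_K|v_h|_{H^2(K)}^2\big)^{1/2}=Ch^2|u|_{H^4(\Omega)}|v_h|_h$ by Cauchy--Schwarz. The main obstacle is the explicit computation of $\widehat\Pi(\xi_i\xi_j^2)$ and the subsequent integrals that pin down the constant $\tfrac13$, together with verifying that the mixed cubic $x_1x_2x_3$ in the three-dimensional shape space contributes nothing; the reference-element reduction, the Bramble--Hilbert step, and the scaling are routine.
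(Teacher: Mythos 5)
The paper itself does not prove this lemma: it is quoted verbatim as an ``asymptotic expansion result from \cite{HuYang}'', so there is no in-paper argument to compare against line by line. Judged on its own, your proposal is a correct and self-contained derivation, and it is exactly the kind of reference-element-plus-Bramble--Hilbert argument one would expect behind such an expansion. The scaling count is right (with $\xi_i=(x_i-x_{i,c})/h$, each of $a_K(u-\Pi_K u,v_h)$, the leading terms $\frac{h^2}{3}\int_K\partial^3_{x_ix_j^2}u\,\partial^3_{x_i^3}v_h$, and $h^2|u|_{H^4(K)}|v_h|_{H^2(K)}$ scales like $h^{-1}$ times its reference counterpart), the boundedness of $G(\cdot,\widehat v_h)$ on $H^3(\widehat K)$ is justified (in 3D $H^3\hookrightarrow C^0$, and $|\widehat v_h|_{H^3}\leq C|\widehat v_h|_{H^2}$ on the fixed space $P(\widehat K)$ modulo $P_1$), and the crucial polynomial verification checks out: $\widehat\Pi(\xi_1\xi_2^2)=\tfrac43\xi_1-\tfrac13\xi_1^3$ (vertex values give $a+b=1$, the face average of $\partial_{\xi_1}$ gives $a+3b=\tfrac13$, and the remaining DOFs are automatically matched), the error Hessian is $\mathrm{diag}$-plus-$2\xi_2$ as you state, parity kills all contributions except $\int_{\widehat K}2\xi_1\,\partial^2_{11}\widehat v_h=\tfrac{16}{3}\partial^3_{111}\widehat v_h$, and this equals the leading sum $\tfrac13\cdot2\cdot\partial^3_{111}\widehat v_h\cdot|\widehat K|$, so $G$ vanishes on all of $P_3(\widehat K)$ (the monomials $\xi_i^3$ and $\xi_1\xi_2\xi_3$ being reproduced with vanishing mixed third derivatives). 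The only points worth stating explicitly in a write-up are (i) that $\Pi_K$ commutes with the axis-aligned scaling because the degrees of freedom and the shape space are invariant under it, which is what licenses the reduction to $\widehat K$, and (ii) that the parity argument pinning $\widehat\Pi(\xi_1\xi_2^2)$ to $\mathrm{span}\{\xi_1,\xi_1^3\}$ uses unisolvency plus invariance of the DOF set under the coordinate reflections; both are routine. Your proof in fact yields a two-sided expansion (equality up to an $O(h^2)$ remainder), which is slightly stronger than the one-sided inequality stated.
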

It is straightforward from Lemma \ref{lem11} to derive that by the inverse inequality
\begin{equation}
a_h(u-\Pi_h u,v_h)\leq Ch\sum_{i\neq j=1}^3\Big|\Big|\frac{\partial^3u}{\partial x_i\partial x_j^2}\Big|\Big|_{L^2(\Omega)}|v_h|_h.\label{eq69}
\end{equation}
Based on the analysis of the interpolation error and the consistency error, we can get the following error estimate of $|\Pi_h u-u_h|_h$.
\begin{thm} \label{thm6}
Suppose that $u\in H^4(\Omega)$, then we have
\begin{equation*}
|\Pi_h u-u_h|_h\leq Ch\sum_{i\neq j=1}^3\Big|\Big|\frac{\partial^3u}{\partial x_i\partial x_j^2}\Big|\Big|_{L^2(\Omega)}.
\end{equation*}
\end{thm}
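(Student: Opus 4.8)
The plan is to mimic exactly the argument used for the two-dimensional case in the proof of Theorem~\ref{thm3}, replacing the 2D ingredients by their 3D counterparts established earlier in this section. The starting point is the algebraic identity obtained by splitting the error in the energy norm: write
\begin{equation*}
|\Pi_h u-u_h|_h^2=a_h(\Pi_h u-u_h,\Pi_h u-u_h)=a_h(u-u_h,\Pi_h u-u_h)+a_h(\Pi_h u-u,\Pi_h u-u_h).
\end{equation*}
Since $u_h$ solves \eqref{eq5}, the first term on the right equals $a_h(u,\Pi_h u-u_h)-(f,\Pi_h u-u_h)$, which is precisely the consistency error evaluated at the test function $w_h=\Pi_h u-u_h\in V_h$.

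Next I would bound each of the two resulting terms. For the consistency term, apply Theorem~\ref{thm8}, which gives $a_h(u,w_h)-(f,w_h)\le Ch^2(\|f\|_{L^2(\Omega)}+|u|_{H^4(\Omega)})|w_h|_h$, hence in particular it is $\le Ch|\Pi_h u-u_h|_h$ times a bounded quantity (the $h^2$ bound is stronger than $h$, so it is harmless here). For the interpolation term $a_h(\Pi_h u-u,\Pi_h u-u_h)=-a_h(u-\Pi_h u,\Pi_h u-u_h)$, invoke the estimate \eqref{eq69} derived from the 3D asymptotic expansion of Lemma~\ref{lem11}, which yields
\begin{equation*}
a_h(u-\Pi_h u,\Pi_h u-u_h)\le Ch\sum_{i\ne j=1}^3\Big\|\frac{\partial^3 u}{\partial x_i\partial x_j^2}\Big\|_{L^2(\Omega)}\,|\Pi_h u-u_h|_h.
\end{equation*}
Adding the two bounds gives $|\Pi_h u-u_h|_h^2\le Ch\big(\text{RHS}\big)|\Pi_h u-u_h|_h$, and dividing through by $|\Pi_h u-u_h|_h$ (trivial if it vanishes) completes the proof. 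If one wants the cleanest statement matching Theorem~\ref{thm6}, absorb the $Ch^2(\|f\|+|u|_{H^4})$ piece into the $Ch\sum\|\partial^3 u/\partial x_i\partial x_j^2\|_{L^2}$ piece by noting $h^2\le Ch$ on a bounded family of meshes, or simply carry both terms and observe the dominant one is $O(h)$.

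There is essentially no obstacle here: the proof is a three-line application of the Strang-type splitting plus two already-proven estimates, completely parallel to the 2D argument for Theorem~\ref{thm3}. The only point requiring a moment's care is making sure the test function $\Pi_h u-u_h$ genuinely lies in $V_h$ so that Theorem~\ref{thm8} and \eqref{eq69} apply — this holds because $\Pi_h u\in V_h$ (the canonical interpolation respects the continuity of vertex values and of the face-averaged normal derivatives that define $V_h$) and $u_h\in V_h$ by construction. All the real work has been front-loaded into Lemma~\ref{lem11} and Theorem~\ref{thm8}; this theorem is merely their corollary.
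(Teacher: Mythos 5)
Your argument is exactly the paper's proof: split $|\Pi_h u-u_h|_h^2$ via Galerkin orthogonality into the consistency term (bounded by Theorem \ref{thm8}) and the interpolation term (bounded by (\ref{eq69})), then divide by $|\Pi_h u-u_h|_h$. Even the loose step of dropping or absorbing the higher-order $Ch^2(\|f\|_{L^2(\Omega)}+|u|_{H^4(\Omega)})$ contribution is handled with the same informality as in the paper, so the proposal matches the original proof in both substance and detail.
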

\begin{proof}
It follows from Theorem \ref{thm8} and (\ref{eq69}) that
\begin{eqnarray*}
|\Pi_h u-u_h|_h^2&=&a_h(\Pi_h u-u_h,\Pi_h u-u_h)\\&=&a_h(u-u_h,\Pi_h u-u_h)+a_h(\Pi_h u-u,\Pi_h u-u_h)\\&=&[a_h(u,\Pi_h u-u_h)-(f,\Pi_h u-u_h)]+a_h(\Pi_h u-u,\Pi_h u-u_h)\\&\leq&Ch^2(||f||_{L^2(\Omega)}+|u|_{H^4(\Omega)})|\Pi_h u-u_h|_h+Ch\sum_{i\neq j=1}^3\Big|\Big|\frac{\partial^3u}{\partial x_i\partial x_j^2}\Big|\Big|_{L^2(\Omega)}|\Pi_h u-u_h|_h.
\end{eqnarray*}
which completes the proof.
\end{proof}
\subsection{Supercloseness  of the correction interpolation}
We can learn from Theorem \ref{thm6} that the convergence of the error $|\Pi_h u-u_h|_h$ is only of order $O(h)$. Therefore, we follow the idea of \cite{Chen} to make a correction of the interpolation to improve its convergence. The operator $\widetilde{\Pi}_K^{*}$ is modified as
\begin{equation}
\widetilde{\Pi}_K^{*}u=\Pi_K u-\widetilde{R}_Ku,\quad u\in H^4(K),
\end{equation}
where $\widetilde{R}_Ku=\sum\limits_{j=9}^{14}b_j(u)\widetilde{\varphi}_j$ with
\begin{eqnarray}
&&\left\{
\begin{array}{lll}
b_j=-\frac{1}{12}\int_{F_1}\Big(\frac{\partial^3 v}{\partial x_1\partial x_2^2}+\frac{\partial^3 v}{\partial x_1\partial x_3^2}\Big)\;\mathrm{d}x_2\mathrm{d}x_3,\;\;j=9,12,\vspace{2mm}\\
b_j=-\frac{1}{12}\int_{F_2}\Big(\frac{\partial^3 v}{\partial x_2\partial x_1^2}+\frac{\partial^3 v}{\partial x_2\partial x_3^2}\Big)\;\mathrm{d}x_1\mathrm{d}x_3,\;\;j=10,13,\vspace{2mm}\\
b_j=-\frac{1}{12}\int_{F_3}\Big(\frac{\partial^3 v}{\partial x_3\partial x_1^2}+\frac{\partial^3 v}{\partial x_3\partial x_2^2}\Big)\;\mathrm{d}x_1\mathrm{d}x_2,\;\;j=11,14,\label{eq55}
\end{array}
\right.\\\nonumber
\end{eqnarray}
and the basis functions
\begin{eqnarray}
&&\left\{
\begin{array}{lll}
\widetilde{\varphi}_9=\frac{h}{4}(\xi_1+1)^2(\xi_1-1),\vspace{2mm}\\
\widetilde{\varphi}_{10}=\frac{h}{4}(\xi_2+1)^2(\xi_2-1),\vspace{2mm}\\
\widetilde{\varphi}_{11}=\frac{h}{4}(\xi_3+1)^2(\xi_3-1),\vspace{2mm}\\
\widetilde{\varphi}_{12}=\frac{h}{4}(\xi_1+1)(\xi_1-1)^2,\vspace{2mm}\\
\widetilde{\varphi}_{13}=\frac{h}{4}(\xi_2+1)(\xi_2-1)^2,\vspace{2mm}\\
\widetilde{\varphi}_{14}=\frac{h}{4}(\xi_3+1)(\xi_3-1)^2.
\end{array}
\right.\\\nonumber
\end{eqnarray}
$\xi_1=\frac{x_1-x_{1,c}}{h},\;\xi_2=\frac{x_2-x_{2,c}}{h},\;\xi_3=\frac{x_3-x_{3,c}}{h},$ where we defined in (\ref{eq72}).\\
Then the global version $\widetilde{\Pi}_h^{*}$ and $\widetilde{R}_h$ are defined as
\begin{equation*}
\widetilde{R}_h|_K=\widetilde{R}_K,\quad \text{for any}\;K\in \mathcal{T}_h,
\end{equation*}
\begin{equation*}
\widetilde{\Pi}_h^{*}|_K=\widetilde{\Pi}_K^{*},\quad \text{for any}\;K\in \mathcal{T}_h.
\end{equation*}
Thus, we can estabilsh superclose results of the three-dimensional cubic Morley element by the correction interpolation $\widetilde{\Pi}_h^{*}u$.
\begin{thm}\label{thm7}
Let $u\in H^4(\Omega)$, $u_h\in V_h$, be the solutions of (\ref{eq1}) and (\ref{eq5}), respectively, then it holds
\begin{equation}
|\widetilde{\Pi}_h^*u-u_h|_h\leq Ch^2(||f||_{L^2(\Omega)}+|u|_{H^4(\Omega)}).
\end{equation}
\end{thm}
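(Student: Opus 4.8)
The plan is to transfer the two-dimensional argument behind Lemma~\ref{lem5} and Theorem~\ref{thm4} to the cubic element. The backbone is the coercivity identity
\[
|\widetilde{\Pi}_h^*u-u_h|_h^2=\big[a_h(u,\widetilde{\Pi}_h^*u-u_h)-(f,\widetilde{\Pi}_h^*u-u_h)\big]+a_h(\widetilde{\Pi}_h^*u-u,\widetilde{\Pi}_h^*u-u_h),
\]
obtained by splitting $\widetilde{\Pi}_h^*u-u_h=(u-u_h)-(u-\widetilde{\Pi}_h^*u)$ and using the Galerkin equation $a_h(u_h,v_h)=(f,v_h)$. The first bracket is already controlled by the consistency estimate of Theorem~\ref{thm8}, so the whole task reduces to proving the supercloseness bound $a_h(u-\widetilde{\Pi}_h^*u,v_h)\le Ch^2|u|_{H^4(\Omega)}|v_h|_h$ for all $v_h\in V_h$; dividing the identity by $|\widetilde{\Pi}_h^*u-u_h|_h$ then finishes the proof.

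The first genuine step is the cubic analogue of Lemma~\ref{lem5}: an explicit expression for $a_h(\widetilde{R}_hu,v_h)$. As in the proof of Lemma~\ref{lem5} I would write $\frac{\partial^2 v_h}{\partial x_i^2}=\overline{\frac{\partial^2 v_h}{\partial x_i^2}}+h\xi_i\frac{\partial^3 v_h}{\partial x_i^3}$, which remains valid in three dimensions since the only new shape function $x_1x_2x_3$ has vanishing pure second derivatives. Because each $\widetilde{\varphi}_j$ depends on a single variable, $\frac{\partial^2\widetilde{R}_Ku}{\partial x_i\partial x_j}=0$ for $i\neq j$, so all mixed-derivative contributions to $a_h(\widetilde{R}_hu,v_h)$ drop out. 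The remaining pure second-derivative integrals are computed exactly as in $(\ref{eq28})$--$(\ref{eq31})$: parity in $\xi_i$ over the symmetric cube annihilates the cross terms, and since $b_9=b_{12}$, $b_{10}=b_{13}$, $b_{11}=b_{14}$ one is left with
\[
a_h(\widetilde{R}_hu,v_h)=\sum_{K\in\mathcal{T}_h}\int_K\Big(b_9\frac{\partial^3 v_h}{\partial x_1^3}+b_{10}\frac{\partial^3 v_h}{\partial x_2^3}+b_{11}\frac{\partial^3 v_h}{\partial x_3^3}\Big)\,\mathrm{d}x_1\mathrm{d}x_2\mathrm{d}x_3.
\]

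Next I would combine this with the asymptotic expansion of Lemma~\ref{lem11}. On the reference cube $\widehat{K}$ introduce the functional $B(\widehat{u},\widehat{v}_h)$ built from the leading terms $\sum_{i\neq j=1}^3\frac13\int_{\widehat{K}}\frac{\partial^3\widehat{u}}{\partial\xi_i\partial\xi_j^2}\frac{\partial^3\widehat{v}_h}{\partial\xi_i^3}$ minus the three correction contributions $-\frac1{12}\int_{\widehat{K}}\big(\int_{\widehat{F}}(\cdots)\big)\frac{\partial^3\widehat{v}_h}{\partial\xi_i^3}$ coming from $\widetilde{R}_K$. For $\widehat{v}_h\in V_h$ each $\frac{\partial^3\widehat{v}_h}{\partial\xi_i^3}$ is a constant, and for $\widehat{u}\in P_3(\widehat{K})$ the third derivatives of $\widehat{u}$ are constants; the face integral then reduces to the value times $|\widehat{F}|=4$, so $-b_9(\widehat{u})=\frac13\big(\frac{\partial^3\widehat{u}}{\partial\xi_1\partial\xi_2^2}+\frac{\partial^3\widehat{u}}{\partial\xi_1\partial\xi_3^2}\big)$ cancels the $i=1$ part of the leading sum (and similarly for $i=2,3$), whence $B(\widehat{u},\widehat{v}_h)=0$ on $P_3(\widehat{K})\times V_h$. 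Together with the trivial bound $B(\widehat{u},\widehat{v}_h)\le C\|\widehat{u}\|_{H^3(\widehat{K})}|\widehat{v}_h|_{H^2(\widehat{K})}$, the Bramble--Hilbert lemma yields $B(\widehat{u},\widehat{v}_h)\le C|\widehat{u}|_{H^4(\widehat{K})}|\widehat{v}_h|_{H^2(\widehat{K})}$; a scaling argument transfers this to each $K$ and, after summation over $\mathcal{T}_h$, shows that the leading term of Lemma~\ref{lem11} plus $a_h(\widetilde{R}_hu,v_h)$ is bounded by $Ch^2|u|_{H^4(\Omega)}|v_h|_h$. Since $a_h(u-\widetilde{\Pi}_h^*u,v_h)=a_h(u-\Pi_hu,v_h)+a_h(\widetilde{R}_hu,v_h)$, this is exactly the required supercloseness bound, and the coercivity identity closes the argument.

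I expect the main obstacle to be the first step. One has to verify carefully that the extra basis function $x_1x_2x_3$ spoils neither the decomposition of $\frac{\partial^2 v_h}{\partial x_i^2}$ nor the vanishing of the mixed second derivatives of $\widetilde{R}_Ku$, and one must keep precise track of the numerical constants (the $\frac1{12}$ of $(\ref{eq55})$ versus the $\frac16$ of the two-dimensional correction, the reference face measure $|\widehat{F}|=4$, the factor $6$ from differentiating $\xi_i^3$) so that the cancellation against the $\frac{h^2}{3}$ coefficients of Lemma~\ref{lem11} is exact. Once this bookkeeping is in place, the Bramble--Hilbert and scaling steps and the final energy estimate are routine repetitions of the two-dimensional proof.
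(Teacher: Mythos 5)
Your proposal is correct and follows essentially the same route as the paper: the same energy identity combined with the consistency estimate of Theorem \ref{thm8}, the explicit computation of $a_h(\widetilde{R}_hu,v_h)$ as the three-dimensional analogue of Lemma \ref{lem5}, and then the reference-element functional (the paper's $B_4$) annihilated on $P_3(\widehat{K})$, treated by Bramble--Hilbert and scaling against the expansion of Lemma \ref{lem11}. Your bookkeeping of the constants (the $\tfrac{1}{12}$ in (\ref{eq55}), $|\widehat{F}|=4$, and $b_9=b_{12}$, etc.) matches what makes the cancellation exact, so no gap remains.
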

\begin{proof}
On the reference element $\widehat{K}$, consider the functional
\begin{eqnarray*}
B_4(\widehat{u},\widehat{v}_h)&=&\sum_{K\in\mathcal{T}_h}\sum_{i\neq j=1}^3 \frac{1}{3}\int_{\widehat{K}}\frac{\partial^3\widehat{u}}{\partial \xi_i\partial \xi_j^2}\frac{\partial^3\widehat{v}_h}{\partial \xi_i^3}\,\mathrm{d}\xi_1\mathrm{d}\xi_2\mathrm{d}\xi_3\\&&-\frac{1}{12}\int_{\widehat{K}}
\bigg(\int_{\widehat{F}_1}\bigg(\frac{\partial^3\widehat{u}}{\partial\xi_1\partial\xi_2^2}+\frac{\partial^3\widehat{u}}{\partial\xi_1\partial\xi_3^2}\bigg)\;\mathrm{d}\xi_2\mathrm{d}\xi_3\bigg)
\frac{\partial^3\widehat{v}_h}{\partial\xi_1^3}\;\mathrm{d}\xi_1\mathrm{d}\xi_2\mathrm{d}\xi_3\\&&-\frac{1}{12}\int_{\widehat{K}}
\bigg(\int_{\widehat{F}_2}\bigg(\frac{\partial^3\widehat{u}}{\partial\xi_1^2\partial\xi_2}+\frac{\partial^3\widehat{u}}{\partial\xi_2\partial\xi_3^2}\bigg)\;\mathrm{d}\xi_1\mathrm{d}\xi_3\bigg)
\frac{\partial^3\widehat{v}_h}{\partial\xi_2^3}\;\mathrm{d}\xi_1\mathrm{d}\xi_2\mathrm{d}\xi_3\\&&-\frac{1}{12}\int_{\widehat{K}}
\bigg(\int_{\widehat{F}_3}\bigg(\frac{\partial^3\widehat{u}}{\partial\xi_1^2\partial\xi_3}+\frac{\partial^3\widehat{u}}{\partial\xi_2^2\partial\xi_3}\bigg)\;\mathrm{d}\xi_1\mathrm{d}\xi_2\bigg)
\frac{\partial^3\widehat{v}_h}{\partial\xi_3^3}\;\mathrm{d}\xi_1\mathrm{d}\xi_2\mathrm{d}\xi_3.
\end{eqnarray*}
It can be checked that
\begin{eqnarray*}
&&\left\{
\begin{array}{lll}
B_4(\widehat{u},\widehat{v}_h)\leq c||\widehat{u}||_{H^3(\widehat{K})}|\widehat{v}_h|_{H^2(\widehat{K})},\\
B_4(\widehat{u},\widehat{v}_h)=0,\;\;\forall\widehat{u}\in P_3(\widehat{K}),\;\;\forall\widehat{v}_h\in V_h.
\end{array}
\right.\\
\end{eqnarray*}
Hence, the Bramble-Hilbert lemma gives
 \begin{equation}
 B_4(\widehat{u},\widehat{v}_h)\leq C|\widehat{u}|_{H^4(\widehat{K})}|\widehat{v}_h|_{H^2(\widehat{K})}. \label{eq13}
 \end{equation}
A scaling argument leads to
 \begin{eqnarray*}
&& \sum_{K\in\mathcal{T}_h}\sum_{i\neq j=1}^3 \frac{h^2}{3}\int_K\frac{\partial^3u}{\partial x_i\partial x_j^2}\frac{\partial^3v_h}{\partial x_i^3}\,\mathrm{d}x_1\mathrm{d}x_2\mathrm{d}x_3+\frac{1}{2}\sum_{K\in\mathcal{T}_h}\int_K(b_9+b_{12})\frac{\partial^3v_h}{\partial x_1^3}\;\mathrm{d}x_1\mathrm{d}x_2\mathrm{d}x_3\\&&+\frac{1}{2}\sum_{K\in\mathcal{T}_h}\int_K(b_{10}+b_{13})\frac{\partial^3v_h}{\partial x_2^3}\;\mathrm{d}x_1\mathrm{d}x_2\mathrm{d}x_3+\frac{1}{2}\sum_{K\in\mathcal{T}_h}\int_K(b_{11}+b_{14})\frac{\partial^3v_h}{\partial x_3^3}\;\mathrm{d}x_1\mathrm{d}x_2\mathrm{d}x_3\\&&\leq Ch^2|u|_{H^4(\Omega)}|v_h|_h.
 \end{eqnarray*}
An application of Lemma \ref{lem11} yields
 \begin{equation}
 a_h(u-\widetilde{\Pi}_h^*u,v_h)=a_h(u-\Pi_h u,v_h)+a_h(\widetilde{R}_hu,v_h)\leq Ch^2|u|_{H^4(\Omega)}|v_h|_h. \label{eq56}
 \end{equation}
 Then, by Theorem \ref{thm8} and (\ref{eq56}), it yields
 \begin{eqnarray*}
 |\widetilde{\Pi}^*_h u-u_h|_h^2&=&a_h(\widetilde{\Pi}^*_h u-u_h,\widetilde{\Pi}^*_h u-u_h)\\&=&a_h(u-u_h,\widetilde{\Pi}^*_h u-u_h)+a_h(\widetilde{\Pi}^*_h u-u,\widetilde{\Pi}^*_h u-u_h)\\&=&[a_h(u,\widetilde{\Pi}^*_h u-u_h)-(f,\widetilde{\Pi}^*_h u-u_h)]+a_h(\widetilde{\Pi}^*_h u-u,\widetilde{\Pi}^*_h u-u_h)\\&\leq&Ch^2(||f||_{L^2(\Omega)}+|u|_{H^4(\Omega)})|\widetilde{\Pi}^*_h u-u_h|_h,
 \end{eqnarray*}
 which completes the proof.
\end{proof}
Furthermore, based on the superclose property, we can obtain the superconvergence result of the three-dimensional cubic Morley element by a proper postprocessing technique.  In order to attain the global superconvergence, we follow the idea of \cite{LinYan1996} to construct the postprocessing operator $\Pi_{3h}^3$ as follows.\\
We merge 27 adjacent elements into a macro element, $\widetilde{K}=\bigcup\limits_{i=1}^{27}K_i$, (see Figure 4), such that, in the macro element $\widetilde{K}$,
\begin{equation}
\Pi_{3h}^3w\in Q_3(\widetilde{K}),\quad \forall w \in C(\widetilde{K}).
\end{equation}
We denote $Z_{ijk},i,j,k=1,2,3,4$ as the vertices of the 27 adjacent elements. Then, the operator $\Pi_{3h}^3$ satisfies
\begin{equation}
\Pi_{3h}^3w(Z_{ijk})=w(Z_{ijk}),\quad i,j,k=1,2,3,4.
\end{equation}

\begin{figure}
\begin{center}
\setlength{\unitlength}{1.2cm}
\begin{picture}(5,5)
\put(0,0){\line(1,0){3}}
\put(0,1){\line(1,0){3}}
\put(0,2){\line(1,0){3}}
\put(0,3){\line(1,0){3}}
\put(0,0){\line(0,1){3}}
\put(1,0){\line(0,1){3}}
\put(2,0){\line(0,1){3}}
\put(3,0){\line(0,1){3}}
\put(3,0){\line(1,1){1.2}}
\put(3,1){\line(1,1){1.2}}
\put(3,2){\line(1,1){1.2}}
\put(3,3){\line(1,1){1.2}}
\put(0,3){\line(1,1){1.2}}
\put(1,3){\line(1,1){1.2}}
\put(2,3){\line(1,1){1.2}}
\put(3,3){\line(1,1){1.2}}
\put(1.2,4.2){\line(1,0){3}}
\put(0.4,3.4){\line(1,0){3}}
\put(0.8,3.8){\line(1,0){3}}
\put(4.2,1.2){\line(0,1){3}}
\put(3.4,0.4){\line(0,1){3}}
\put(3.8,0.8){\line(0,1){3}}
\end{picture}
\end{center}
\caption{macro element $\widetilde{K}$}
\end{figure}
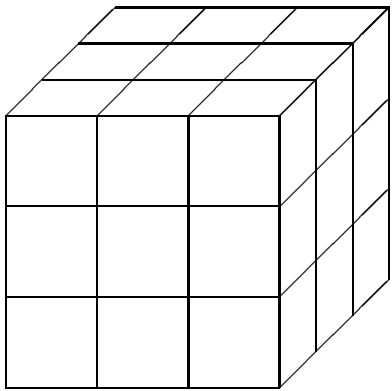
Besides, the postprocessing operator $\Pi_{3h}^3$ has the following properties
\begin{eqnarray}
&&\left\{
\begin{array}{lll}
\Pi_{3h}^3(\widetilde{\Pi}^*_h u)=\Pi_{3h}^3u,\quad \forall u\in H^4(\Omega),\\
|\Pi_{3h}^3 v_h|_h\leq C|v_h|_h,\quad \forall v_h\in V_h,\\
|u-\Pi_{3h}^3 u|_h\leq Ch^2|u|_{H^4(\Omega)},\quad \forall u\in H^4(\Omega).\label{eq63}
\end{array}
\right.\\ \nonumber
\end{eqnarray}
Then, we can get the following global superconvergent result.
\begin{thm}
Let $u\in H^4(\Omega)$, $u_h\in V_h$, be the solutions of (\ref{eq1}) and (\ref{eq5}), respectively, then it holds
\begin{equation}
|u-\Pi_{3h}^3u_h|_h\leq Ch^2\big(||f||_{L^2(\Omega)}+|u|_{H^4(\Omega)}\big).
\end{equation}
\end{thm}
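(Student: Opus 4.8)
The plan is to obtain this global estimate as a short consequence of the supercloseness result in Theorem~\ref{thm7} and the three properties of the macro-element postprocessing operator $\Pi_{3h}^3$ collected in (\ref{eq63}), exactly paralleling the two-dimensional argument given right after (\ref{eq85}). The starting point is to insert $\Pi_{3h}^3 u$ and split by the triangle inequality in the broken seminorm $|\cdot|_h$,
\begin{equation*}
|u-\Pi_{3h}^3 u_h|_h \leq |u-\Pi_{3h}^3 u|_h + |\Pi_{3h}^3 u-\Pi_{3h}^3 u_h|_h ,
\end{equation*}
so that the task reduces to controlling the two terms on the right.

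For the first term I would directly use the third line of (\ref{eq63}), giving $|u-\Pi_{3h}^3 u|_h\leq Ch^2|u|_{H^4(\Omega)}$. For the second term I would invoke the reproduction identity $\Pi_{3h}^3(\widetilde{\Pi}^*_h u)=\Pi_{3h}^3 u$ from the first line of (\ref{eq63}) to rewrite $\Pi_{3h}^3 u-\Pi_{3h}^3 u_h=\Pi_{3h}^3(\widetilde{\Pi}^*_h u-u_h)$, then apply the stability estimate $|\Pi_{3h}^3 v_h|_h\leq C|v_h|_h$ with $v_h=\widetilde{\Pi}^*_h u-u_h\in V_h$, and finally bound $|\widetilde{\Pi}^*_h u-u_h|_h$ by Theorem~\ref{thm7}. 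Chaining these steps yields
\begin{equation*}
|u-\Pi_{3h}^3 u_h|_h \leq |u-\Pi_{3h}^3 u|_h + C\,|\widetilde{\Pi}^*_h u-u_h|_h \leq Ch^2\big(||f||_{L^2(\Omega)}+|u|_{H^4(\Omega)}\big),
\end{equation*}
which is precisely the asserted bound.

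Since the analytical substance has already been packed into Theorem~\ref{thm7} and into the stated properties of $\Pi_{3h}^3$, the proof of the theorem itself is this three-line computation; the only point requiring care is the legitimacy of the reproduction identity, which holds because the correction functions $\widetilde{\varphi}_j$, $j=9,\dots,14$, each vanish at all eight vertices of $K$, so that $\widetilde{\Pi}^*_h u$ and $u$ share the same values at every macro-element node $Z_{ijk}$ and hence determine the same $Q_3(\widetilde{K})$ interpolant. If one instead had to establish (\ref{eq63}) from scratch, the main obstacle would be the mesh-independent stability $|\Pi_{3h}^3 v_h|_h\leq C|v_h|_h$ for the $64$-node tensor-product $Q_3$ interpolation on a macro patch of $27$ cubes, which needs an inverse estimate on each sub-cube together with the uniformity of the cubic triangulation; the reproduction property and the estimate $|u-\Pi_{3h}^3 u|_h\leq Ch^2|u|_{H^4(\Omega)}$ then follow from the exactness of $\Pi_{3h}^3$ on $Q_3(\widetilde{K})$, which contains $P_3$, via a Bramble--Hilbert and scaling argument on $\widetilde{K}$, in the spirit of \cite{LinYan1996}.
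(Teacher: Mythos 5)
Your proof is correct and takes essentially the same route as the paper's: a triangle inequality through the intermediate function $\Pi_{3h}^3 u=\Pi_{3h}^3\widetilde{\Pi}^*_h u$, followed by the reproduction, stability, and approximation properties in (\ref{eq63}) together with the supercloseness estimate of Theorem \ref{thm7}. Your extra remark that the identity $\Pi_{3h}^3(\widetilde{\Pi}^*_h u)=\Pi_{3h}^3 u$ holds because the correction functions $\widetilde{\varphi}_j$ vanish at all vertices is a correct verification of a point the paper leaves implicit.
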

\begin{proof}
It follows the properties (\ref{eq63}) and Theorem \ref{thm7} that
\begin{eqnarray}
|u-\Pi_{3h}^3u_h|_h&\leq&|u-\Pi_{3h}^3\widetilde{\Pi}^*_h u_h|_h+|\Pi_{3h}^3(\widetilde{\Pi}^*_h u-u_h)|_h\nonumber\\&\leq&|u-\Pi_{3h}^3u|_h+C|\widetilde{\Pi}^*_h u-u_h|_h\nonumber\\&\leq&Ch^2\big(||f||_{L^2(\Omega)}+|u|_{H^4(\Omega)}\big),
\end{eqnarray}
which completes the proof.
\end{proof}

\section{Numerical results}
In this section, we present some numerical results of the two-dimensional rectangular Morley element and three-dimensional cubic Morley element to demonstrate our theoretical results. Herein, we denote $r$ as the rate of convergence. For the sake of simplicity, denote\\

Err1=$|u-u_h|_h$, \quad  Err2=$|\Pi_h u-u_h|_h$,  \quad  Err3=$|\Pi_h^*u-u_h|_h$, \\

Err4=$|u-\Pi_{3h}^3\Pi_h^*u|_h$,\quad  Err5=$|\widetilde{\Pi}_h^*u-u_h|_h$,  \quad  Err6=$|u-\Pi_{3h}^3\widetilde{\Pi}_h^*u|_h$.\\

In the two-dimensional case, we choose the square domain $\Omega_1=[0,1]^2$. We partition the domain $\Omega_1$ into the uniform squares with the meshsize $h=\frac{1}{N}$ for some integer $N$.\\

$\bullet$ In the first example, we use the function $u_1(x,y)=sin^2(\pi x)sin^2(\pi y)$ as the exact solution of problem (\ref{eq45}).\\

$\bullet$ In the second example, we use the function $u_2(x,y)=x^2(1-x)^2y^2(1-y)^2$ as the exact solution of problem (\ref{eq45}). \\

The errors Err1, Err2, Err3, Err4 are computed on $\Omega_1$, the corresponding computational results of the two-dimensional rectangular Morley element are listed in Table 3 and Table 4, respectively. One can also refer to Figure 5 for logarithmic plot of the norms above-mentioned.

In the three-dimensional case, we choose the square domain $\Omega_2=[0,1]^3$. We partition the domain $\Omega_2$ into the uniform cubic meshes with the meshsize $h=\frac{1}{N}$ for some integer $N$. \\

$\bullet$ In the third example, we use the function $u_3(x,y,z)=sin^2(\pi x)sin^2(\pi y)sin^2(\pi z)$ as the exact solution of problem (\ref{eq45}).\\

$\bullet$ In the fourth example, we use the function $u_4(x,y,z)=x^2(1-x)^2y^2(1-y)^2z^2(1-z)^2$ as the exact solution of problem (\ref{eq45}). \\

The errors Err1, Err2, Err5, Err6 are computed on $\Omega_2$, the corresponding computational results of the three-dimensional cubic Morley element are listed in Table 5 and Table 6, respectively. One can also refer to Figure 6 for logarithmic plot of the norms above-mentioned.

\begin{table}[h]
 \centering
 \caption{The errors of the 2-D rectangular Morley element for $u_1(x,y)$}
 \begin{tabular}{cccccc}
 \hline
 N              &6                        &12                   &24        &48   &\\\hline
 Err1             &3.801933642  &1.848733847   &0.916356489 &0.457125924&\vspace{2mm}\\
 r                  &--- &1.040195809  &1.012556679& 1.003317321&\vspace{2mm}\\
 Err2            &1.97558386 &1.04229950   & 0.526008399 &0.263522765&\vspace{2mm}\\
 r                  &---&0.922509198&0.986612148 &0.997158239&\vspace{2mm}\\
 Err3           &1.614468104  &0.436680422   &0.111290839 &0.028096575&\vspace{2mm}\\
 r                 &--- &1.886409182 &1.972243010&1.985868661& \vspace{2mm}\\
 Err4             &3.503176938  &1.427388728   &0.359795304&0.09010039&\vspace{2mm}\\
 r                  &--- &1.295285573  &1.988130023&1.997571101&\vspace{2mm}\\ \hline
 \end{tabular}
\end{table}

\begin{table}[h]
 \centering
 \caption{The errors of the 2-D rectangular Morley element for $u_2(x,y)$}
 \begin{tabular}{cccccc}
 \hline
  N                &6                        &12                   &24        &48   &\\\hline
 Err1             &0.014701829  &0.007300451   &0.003640499 &0.001803668&\vspace{2mm}\\
 r                  &--- &1.009938149  &1.003849379&1.013202406&\vspace{2mm}\\
 Err2            &0.008054841 &0.004151273  & 0.002093528 &0.00100988&\vspace{2mm}\\
 r                  &---&0.956302299 & 0.987617597&1.051752340&\vspace{2mm}\\
 Err3           &0.006314902  &0.001727848  &0.000441448 &0.00011096&\vspace{2mm}\\
 r                 &--- &1.869784037 &1.968660896 &1.992203814& \vspace{2mm}\\
 Err4             &0.015833538 &0.003874778   &0.000963156&0.00024043&\vspace{2mm}\\
 r                  &--- &2.030798101  &2.008272264&2.002152565&\vspace{2mm}\\ \hline
 \end{tabular}
\end{table}

\begin{table}[h]
 \centering
 \caption{The errors of the 3-D rectangular Morley element for $u_3(x,y,z)$}
 \begin{tabular}{cccccc}
 \hline
 N              &6                        &12                   &24        &48   &\\\hline
 Err1             &4.167950479 &1.983259265  &0.97524877 & 0.48523741   &\vspace{2mm}\\
 r                  &--- &1.071464848  &1.02403111&  1.007079492   &\vspace{2mm}\\
 Err2            &2.569313835 &1.299194244   & 0.64723551 &0.32311494    &\vspace{2mm}\\
 r                  &---&0.9837659766 &1.00525448 &  1.002243303  &\vspace{2mm}\\
 Err5           &2.221462641  &0.611765555   &0.15663992 &  0.03948401 &\vspace{2mm}\\
 r                 &--- &1.860459095 &1.965526946 &   1.988111511    & \vspace{2mm}\\
 Err6             &3.862477845 &1.283658784   &0.319649973     & 0.07982083   &\vspace{2mm}\\
 r                  &--- &1.589264895  &2.005696886  & 2.001655786    &\vspace{2mm}\\ \hline
 \end{tabular}
\end{table}

\begin{table}[h]
 \centering
 \caption{The errors of the 3-D rectangular Morley element for $u_4(x,y,z)$}
 \begin{tabular}{cccccc}
 \hline
 N                 &6                        &12                   &24        &48   &\\\hline
 Err1             &0.001051488 &0.000509571  &0.00025198&0.00012562 &\vspace{2mm}\\
 r                  &--- &1.045077305  &1.015973946& 1.004243055 &\vspace{2mm}\\
 Err2            &0.000666808 &0.000330208   & 0.00016451 & 0.000082177  &\vspace{2mm}\\
 r                  &---&1.013896 &1.00519979 & 1.001368714    &\vspace{2mm}\\
 Err5           &0.00055831  &0.000154203  &0.000039494 &0.000009933   &\vspace{2mm}\\
 r                 &--- &1.856235564 &1.965125435 & 1.991332077      & \vspace{2mm}\\
 Err6             &0.00096663 &0.000232674   &  0.000057562   &0.000014351    &\vspace{2mm}\\
 r                  &--- &2.054653764  &2.015121382   &2.003965450  &\vspace{2mm}\\ \hline
 \end{tabular}
\end{table}

\begin{figure}[!hbt]
  \centering
  \includegraphics[width=6.5in, height=3in]{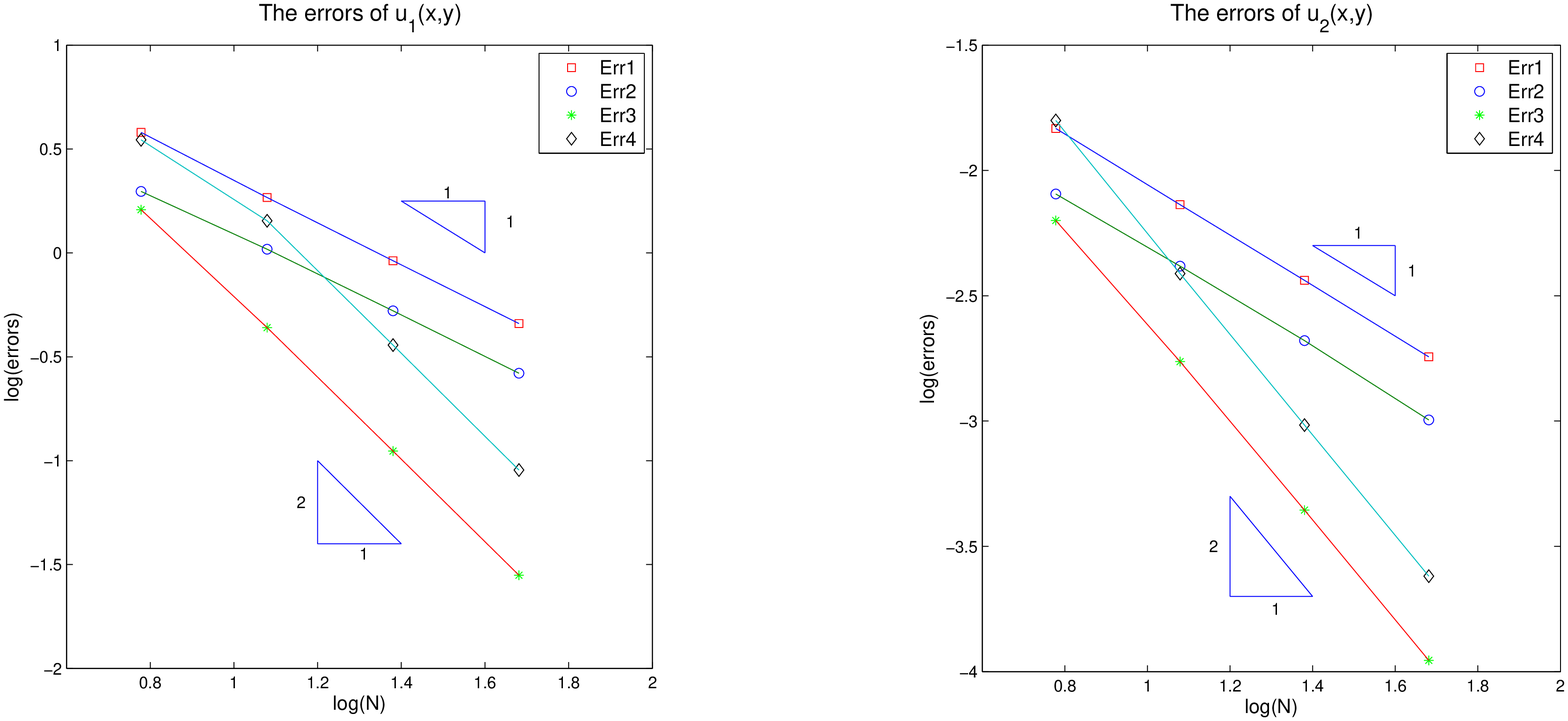}
  \caption{The errors of the 2-D rectangular Morley element for $u_1(x,y)$ and $u_2(x,y)$}

\end{figure}

\begin{figure}[!hbt]
  \centering
  \includegraphics[width=6.5in, height=3in]{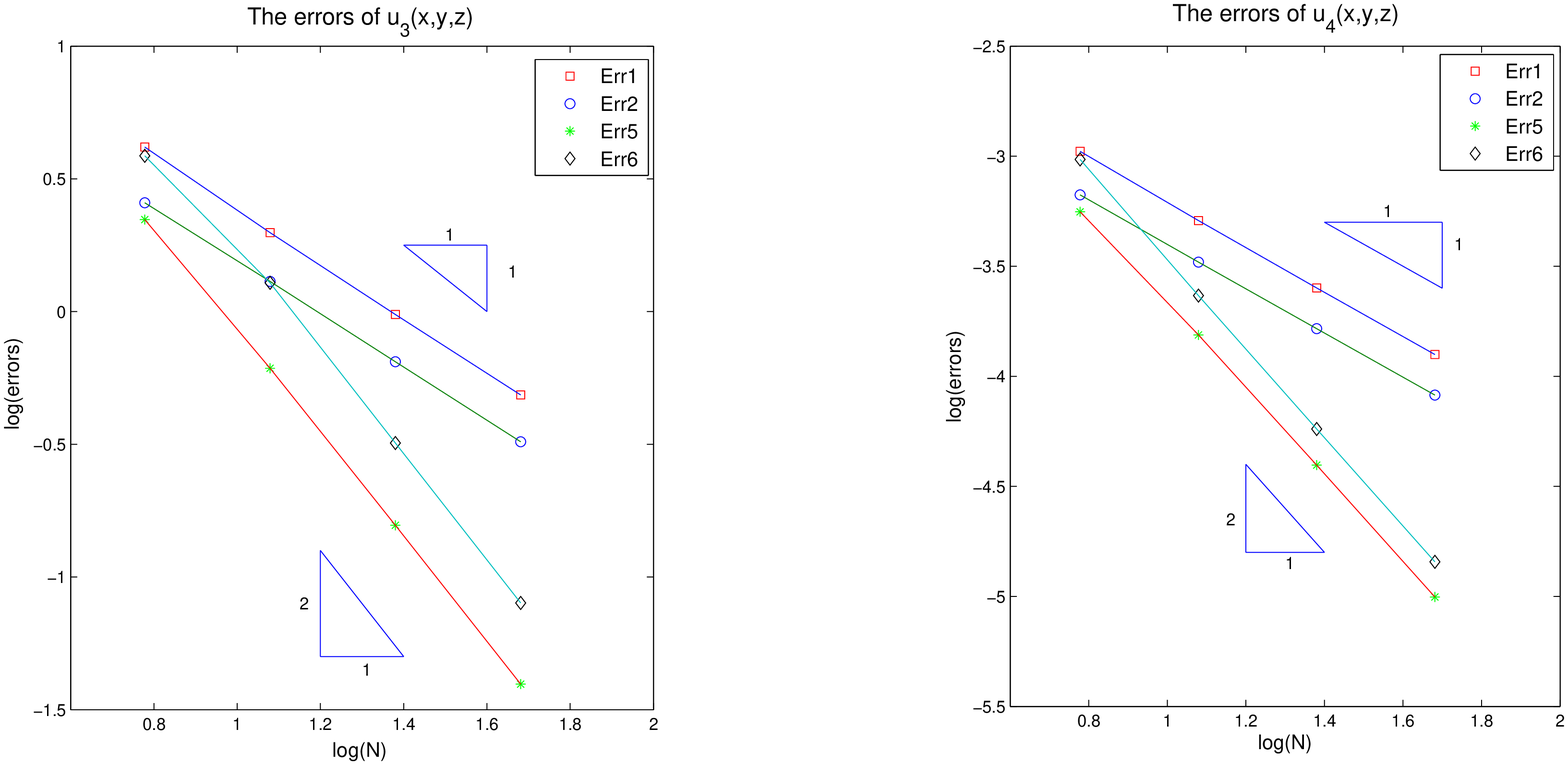}
  \caption{The errors of the 3-D rectangular Morley element for $u_3(x,y,z)$ and $u_4(x,y,z)$}

\end{figure}

From the tables and figures, we can see the superconvergent behaviors of the numerical solutions. Besides, in our examples,  the exact solution $u_i(x,y),i=1,2$, or $u_i(x,y,z),i=3,4$, don't satisfy the boundary condition $\frac{\partial^3 u_i}{\partial n^3}=0, i=1,\cdots,4$, which are need for superconvergence of second order in two-dimensional case \cite{HuMa, MaoShi}. However, our results still have the superconvergent property, which are coincide with our theoretical analysis.

\end{document}